\numberwithin{equation}{section}
\newcommand{\e}{\varepsilon}
\renewcommand{\Re}{\mathrm{Re}}
\renewcommand{\Im}{\mathrm{Im}}
\DeclareMathOperator{\Tr}{Tr}
\newtheorem{prop}{Proposition}[section]
\newtheorem{Theorem}[prop]{Theorem}
\newtheorem{Lemma}[prop]{Lemma}
\newtheorem{Definition}[prop]{Definition}
\newtheorem{Claim}[prop]{Claim}
\newtheorem{Remark}[prop]{Remark}
\title[AKPZ growth process with a smooth phase]{A $(2+1)$-dimensional Anisotropic KPZ growth model with a smooth phase}
\author{Sunil Chhita}
\address{Department of Mathematical Sciences, Durham University, Stockton Road, Durham, DH1 3LE, UK.} \email{sunil.chhita@durham.ac.uk} 
\author{Fabio Lucio Toninelli}
\address{Univ Lyon, CNRS, Université Claude Bernard Lyon 1,  UMR 5208, Institut Camille Jordan,  F-69622 Villeurbanne cedex, France} \email{toninelli@math.univ-lyon1.fr}
\begin{document}
\maketitle

\begin{abstract} Stochastic growth processes in dimension $(2+1)$ were
  conjectured by D. Wolf, on the basis of renormalization-group
  arguments, to fall into two distinct universality classes, according
  to whether the Hessian $H_\rho$ of the speed of growth $v(\rho)$ as
  a function of the average slope $\rho$ satisfies $\det H_\rho>0$
  (``isotropic KPZ class'') or $\det H_\rho\le 0$ (``anisotropic KPZ
  (AKPZ)'' class). The former is characterized by strictly positive
  growth and roughness exponents, while in the AKPZ class fluctuations
  are logarithmic in time and space.

  It is natural to ask (a) if one can exhibit interesting growth
  models with ``smooth'' stationary states, i.e., with $O(1)$
  fluctuations (instead of logarithmically or power-like growing, as
  in Wolf's picture) and (b) what new phenomena arise when $v(\cdot)$
  is not differentiable, so that $H_\rho$ is not defined. The two questions
  are actually related and here we provide an answer to both, in a
  specific framework. We define a $(2+1)$-dimensional interface growth
  process, based on the so-called shuffling algorithm for domino
  tilings. The stationary, non-reversible measures are
  translation-invariant Gibbs measures on perfect matchings of
  $\mathbb Z^2$, with $2$-periodic weights. If $\rho\ne0$, fluctuations are known to grow logarithmically in space
and to  behave like a two-dimensional GFF.
  We prove
  that fluctuations grow at most logarithmically in time and that $\det H_\rho<0$: the
  model belongs to the AKPZ class.  When $\rho=0$, instead, the
  stationary state is ``smooth'', with correlations uniformly bounded
  in space and time; correspondingly, $v(\cdot)$ is not differentiable
  at $\rho=0$ and we extract the singularity of the eigenvalues of
  $H_\rho$ for $\rho\sim 0$.

\end{abstract}
\section{Introduction}

A statistical physicist's view of stochastic interface growth models
is that they describe the diverse phenomena of interface growth 
and crystal deposition \cite{BS95}.  These models are related
mathematically to both interacting particle systems \cite{KL99,Spo91}
and stochastic partial differential equations, in particular, the
celebrated Kardar-Parisi-Zhang (KPZ) equation \cite{KPZ86}.

A $(2+1)$-dimensional stochastic growth process describes the time evolution,
in three-dimensional space, of a two-dimensional interface, modeled
mathematically as the graph of a function $h$ from $\mathbb Z^2$ (or
some other two-dimensional lattice) to $\mathbb R$.  The dynamics form
an irreversible Markov chain and the transition rates, by which the
interface height  increases or decreases, are asymmetric and
induce a non-trivial drift, that depends on the average local  slope. Among
the most interesting and challenging questions are those of
describing the stationary states, of obtaining the deterministic PDE
that describes on large scales the typical evolution of the height
function (hydrodynamic limit) and of understanding the large-scale space-time structure of the height fluctuations.

In many natural cases, given a slope $\rho\in \mathbb R^2$, there
exists a unique stationary state $\mu_\rho$ with average slope $\rho$,
i.e. $\mu_\rho(h(x)-h(y))=\rho\cdot(x-y)$. What is stationary is
actually only the law of the \emph{height gradients}
$\{h(x)-h(y)\}_{x,y\in\mathbb Z^d}$, while the average height itself
grows linearly in time, with a certain speed of growth $v(\rho)$. The
PDE describing the hydrodynamic limit is then expected to be of the
Hamilton-Jacobi type
\begin{eqnarray}
  \label{eq:HJ}
  \partial_t \psi=v(\nabla \psi)
\end{eqnarray}
with $\psi= \psi(x,t), t\ge0,x\in \mathbb R^d$ representing the suitably rescaled height profile, $\nabla$ denoting gradient w.r.t. the space variable and 
the solution of the PDE being interpreted in the vanishing viscosity sense.

As far as fluctuations are concerned, one usually introduces a roughness exponent $\alpha$ and a growth exponent $\beta$, that measure how height fluctuations grow in space and time in the stationary process, namely
\begin{eqnarray}
  \label{eq:alpha}
  {\rm Var}_{\mu_\rho}(h(x)-h(y))\stackrel{|x-y|\to\infty}\sim const.\times(1+ |x-y|^{2\alpha})
\end{eqnarray}
and
\begin{eqnarray}
  \label{eq:beta}
   {\rm Var}_{\mu_\rho}(h(x,t)-h(x,0))\stackrel{t\to\infty}\sim const.\times(1+t^{2\beta}),
\end{eqnarray}
vanishing exponents usually meaning logarithmic growth. (The exponents
can also be negative, in which case the constant $1$ dominates
asymptotically; this is the case for the stochastic heat equation with additive noise in dimension $d\ge3$, see below.)

Heuristically, one expects height fluctuations in the stationary process to
be qualitatively described, \emph{on large space-time scales}, by a
stochastic PDE (KPZ equation) of the type 
\begin{eqnarray}
  \label{eq:SPDE}
  \partial_t \phi(x,t)=\Delta \phi(x,t)+\langle\nabla \phi(x,t),H_{\rho} \nabla \phi(x,t)\rangle+\xi(x,t),
\end{eqnarray}
where the diffusive Laplacian term tends to locally smooth out
fluctuations, the $2\times 2$ symmetric matrix $H_\rho$ in the non-linear term is the Hessian
of the function $v(\cdot)$ computed at $\rho$ and $\xi(x,t)$ is a
space-time noise that contains the randomness of the process. Equation
\eqref{eq:SPDE} is singular if $\xi$ is space-time white
noise (Hairer's theory of regularity structures gives a meaning to it
in space dimension $d=1$ \cite{Hai11}, where $H_\rho$ reduces to the
scalar quantity $v''(\rho)$). On the other hand we are interested in
properties on large space-time scales and lattice models have a
natural ``ultraviolet'' space cut-off (lattice
spacing), so we can as well assume that the noise is smooth in space and  correlated over  distances of order $1$.

Most of the known rigorous results on stochastic growth models have
been proven in dimension $(1+1)$, often in cases where the stationary
measures of the interface gradients are of i.i.d. type.  It is then
expected (and mathematically proved in many examples, cf.  e.g.
\cite{FS10,Cor11,QS15} for recent reviews on the mathematical aspects
of the KPZ equation and universality class in one dimension) that when
$v''(\rho)\ne0$ the exponents $\alpha,\beta$ are given by
$\alpha_{d=1}=1/2,\beta_{d=1}=1/3$, as already predicted in the
original work \cite{KPZ86}. These exponents differ from those
($\alpha_{SHE}=(2-d)/2, \beta_{SHE}=(2-d)/4$) of the linear stochastic
heat equation with additive noise, obtained by dropping the non-linear
term in \eqref{eq:SPDE}.  On the other hand, for $(d+1)$-dimensional
models, $d\ge 3$, renormalization-group computations \cite{KPZ86,BS95}
applied to the stochastic PDE \eqref{eq:SPDE} predict that, if the
non-linear term is sufficiently small, then it is irrelevant, meaning
that the large-scale fluctuation properties and exponents
$\alpha,\beta$ are the same as those of the stochastic heat equation
(cf. \cite{magnen2017diffusive,gu2017edwards} for recent mathematical
progress in the $\lambda\ll1$ regime; let us also add that
\cite{KPZ86} predicts a transition at a critical non-linearity
$\lambda_c\ne0$ but nothing is known rigorously).

 In the $(2+1)$-dimensional case we consider here, the picture is
 different. On the basis of a renormalization-group analysis of
 \eqref{eq:SPDE} by D. Wolf \cite{Wol91}, of numerical simulations
 \cite{TFW92,HH13,halpin1992kinetic} and on a few mathematically treatable models (see
 references in point (ii) below), the following conjectural picture has emerged (see
 also the introduction of \cite{toninelli20173+} for a more detailed
 discussion):
 \begin{enumerate}
 \item [(i)] If $\det H_\rho$ is strictly positive, i.e. if
 $v(\cdot)$ is convex or concave with non-zero Gaussian curvature, the
 growth model is said to belong to the ``KPZ (or Isotropic KPZ)
 class''. In this case, the exponents $\alpha,\beta$ are strictly
 positive and, in particular, different from those of the
 two-dimensional stochastic heat equation, which are both zero.   The actual values of the exponents are known numerically to a high degree of precision \cite{TFW92,HH13}, but we are not aware of any rigorous or even convincing arguments to predict them.
\item [(ii)] If instead $\det H_\rho\le 0$ (``Anisotropic KPZ'' or
  AKPZ class) one expects that $\alpha=\beta=0$ and that moreover the
  growth of \eqref{eq:alpha}, \eqref{eq:beta} is logarithmic, exactly
  like for the two-dimensional stochastic heat equation. This belief
  is supported by the mathematical analysis of various
  $(2+1)$-dimensional growth models
  \cite{BF08,toninelli20172+,CF15,chhita2017speed} that share the
  following features: stationary states can be found explicitly and
  their height fluctuations behave on large space scales as a massless
  Gaussian Field (GFF), with $\alpha=0$ and logarithmic correlations;
  the speed of growth $v(\cdot)$ can be computed and $\det H_\rho$
  turns out to be negative; the growth exponent $\beta$ is zero and
  the height variance grows at most logarithmically with time.

 \end{enumerate}

 This state of affairs naturally leads to two questions. The first is
 to understand what happens when the function $v(\cdot)$ is not
 differentiable, so that the Hessian matrix $H_\rho$ in \eqref{eq:SPDE} is
 ill-defined. Can singularities of $v(\cdot)$ lead to qualitatively
 different dynamic phenomena?  Secondly, Wolf's picture predicts rough
 stationary states, i.e. such that the l.h.s. of \eqref{eq:alpha}
 diverges at larges distances, either logarithmically (AKPZ class) or
 as a power law (KPZ class). On the other hand, it is well known that
 certain \emph{equilibrium}, two-dimensional random interface models
 exhibit smooth phases, i.e., for some choices of the slope the
 l.h.s. of \eqref{eq:alpha} is bounded uniformly in $x,y$. Classical
 example are Solid-on-Solid interfaces and $+/-$ interfaces of the
 three-dimensional Ising model with Dobrushin boundary conditions, for
 slope $\rho=0$ and sufficiently low temperature \cite{
   brandenberger1982decay,dobrushin1973gibbs,bricmont1982surface}. Is
 it possible to have non-trivial AKPZ growth models with a smooth phase
 and, if yes, how does this fit in Wolf's conjectured picture? (See
 also \cite[Sec. 7]{krug1992kinetic} for a related phenomenon in a
 \emph{one-dimensional} growth model and for the discussion of the
 corresponding ``faceting transition'').
 
 In this work, through the analysis of an explicit model, we show that
 the two questions are closely related. The growth process we study is
 based on the so-called shuffling algorithm \cite{EKLP92,Pro03}, that
 was devised to perfectly sample domino tilings of the Aztec
 diamond. Instead of working on the Aztec diamond, the dynamics are
 defined on the (toroidal) periodized lattice
 $(\mathbb Z/2L\mathbb Z)^2$ and eventually on the infinite lattice
 $\mathbb Z^2$, a framework that is more relevant for growth
 processes.  The content of Theorem \ref{th:invinf} is that the Gibbs
 measures \cite{KOS03} for the dimer model on $\mathbb Z^2$, with
 2-periodic weights are translation invariant, stationary,
 non-reversible measures of the dynamics. The reason behind this
 choice of weights is that this is the simplest and best-studied
 \cite{CJ16, DK17} situation that admits a smooth phase: the Gibbs
 measure has $O(1)$ fluctuations if the slope $\rho$ is zero.  (As a
 side remark, what we call here ``rough'' and ``smooth'' phases
 correspond to ``liquid'' and ``gas'' phases in the language of
 \cite{KOS03}).  As far as growth of fluctuations in time, Theorem
 \ref{th:vrho} yields that the l.h.s. of \eqref{eq:beta} grows at most
 logarithmically in $t$ in the rough phase $\rho\ne0$, as is typical
 for models in the AKPZ class, and is $O(1)$ uniformly in time in the
 smooth phase $\rho=0$.  Most of the technical work is devoted to
 actually computing the speed of growth $v(\rho)$. The ``implicit''
 formula for the speed in terms of dimer occupation probabilities is
 very simple, see Eq. \eqref{eq:vrho}. On the other hand, in order to
 analyze its convexity properties and its singularity for $\rho\to0$,
 we need a more workable expression. This is the content of Theorem
 \ref{th:v}. Note that, while the speed vanishes for $\rho=0$, the
 stationary measure is \emph{non-reversible} also in this case.

 Even with an
 explicit expression for $v(\cdot)$ like Eq. \eqref{eq:v} at hand,
 formulas for its second derivatives are too complicated to check
 directly whether the sign of $\det H_\rho$ is negative, as suggested
 by Wolf's picture. To bypass these difficulties, we found an
 equivalent but more complex-analytic type expression for $v(\cdot)$,
 see Theorem \ref{th:harmonic}, that allows to prove $\det H_\rho<0$
 for every $\rho\ne0$.  Finally, for $\rho\to0$ we show that one
 eigenvalue of $H_\rho$ tends to $-\infty$ and the other tends to $0$,
 while their product tends to a negative constant.

 It is natural to ask whether the latter behavior is common to other
 AKPZ growth models, when the slope approaches that of a smooth phase.
 The shuffling algorithm on $\mathbb Z^2$, with weights of period
 larger than $2$, may provide examples where such conjecture could be
 tested, provided that the {face weights} evolve periodically with the
 shuffling algorithm (this is not true in general, see discussion in the next paragraph).  Let us remark
 that if the space  periodicity of the graph is high enough, there may exist
 several smooth stationary phases for different integer values of the
 slope $\rho\in\mathbb Z^2$ \cite{KOS03}. It is presently unclear how
 to extend to this more general case the results of Theorems
 \ref{th:v}--\ref{th:AKPZ}. The main obstacle is that part of our
 approach relies on an explicit computation of the speed of growth, a
 task which seems, at least to us, to increase in complexity as the
 periodicity increases.

To conclude, let us mention
 an intriguing connection between the growth process we study and a
 discrete dynamical system introduced by Goncharov and
 Kenyon~\cite{GK13}. Put simply, one of the results of \cite{GK13} is
 that the shuffling algorithm on the torus, seen as a map on weighted
 periodic graphs, behaves similar to a classical Hamiltonian
 integrable dynamical system. More precisely, some quantities are
 conserved, such as the {spectral curve} of the associated dimer
 model, while (introducing a suitable algebraic structure) others
 evolve quasi-periodically in time, such as the face weights. The
 toroidal square grid with two-periodic weights, that we consider
 here, belongs to a special class of graphs where the evolution of the
 face weights is not just quasi-periodic but actually periodic in
 time.

The rest of the paper is organized as follows: in
Section~\ref{sec:notations} we give the necessary notation for the
paper while in Section~\ref{sec:dynamics}, we describe the shuffling
dynamics on the torus and give our main results.  Proof that the
dynamics are stationary is given in Section~\ref{sec:firstproofs} and
the proof of the formula for the speed (in terms of edge
probabilities) and of the bound on fluctuation growth is given in
Section~\ref{sec:speedfluctuationproofs}. The explicit formula for the
speed of growth for the two-periodic weights is found in
Section~\ref{sec:compv} and in Section~\ref{sec:AKPZsig} it is shown
that the model is in the AKPZ class when $\rho \not = 0$.

\section{Notations}
\label{sec:notations}
We let $\mathbb T_L$ be the square lattice $\mathbb Z^2$ periodized,
both horizontally and vertically, with period $2L$. The graph
$\mathbb T_L$ is bipartite and the $(2L)^2$ vertices are alternately
colored black and white.  Faces of $\mathbb T_L$ with a white top-right
vertex are called ``even'', the others are
called ``odd''.  Denote $\Omega_L$ to be the set of dimer coverings (perfect
matchings) of $\mathbb T_L$.   Let $C_1$
(resp. $C_2$) be a horizontal (resp. vertical) nearest-neighbor, oriented
closed path of length $2L$ on faces, directed to the right (resp. upward).  Given
$\eta\in \Omega_L$, we let for $i=1,2$
\begin{eqnarray}
  \label{eq:Delta1}
  \Delta_i(\eta)=\sum_{e\in C_i} \sigma_e 1_{e\in\eta}
\end{eqnarray}
where the sum is over edges crossed by the path, $\sigma_e$ is $+1$
if the edge $e$ is crossed with the white vertex on the right and $-1$
if it is crossed with the white vertex on the left
and $1_{e\in \eta}$ is the indicator that $e$
is occupied by a dimer.  Correspondingly, given $\Delta=(\Delta_1,\Delta_2)$, let
\begin{eqnarray}
  \label{eq:OmegaL}
  \Omega_L(\Delta)=\{\eta\in\Omega_L: \Delta_i(\eta)=\Delta_i, i=1,2\}.
\end{eqnarray}
The ``slope''
$(\Delta_1(\eta)/L,\Delta_2(\eta)/L)$ belongs \cite{KenLectures} to $ \mathcal P$, the 
closed square with vertices $(-1,0),(0,-1),(1,0),(0,1)$.

Edges of $\mathbb T_L$ are assigned a positive weight taking values
either $1$ or $a>0$ (without loss of generality, we let $0<a\le
1$). First, assign to each even face a label ``$1$'' or ``$a$'' in an
alternate way, as in Fig.~\ref{fig:weightsfaces}.
\begin{figure}
\begin{center}
\includegraphics[height=5cm]{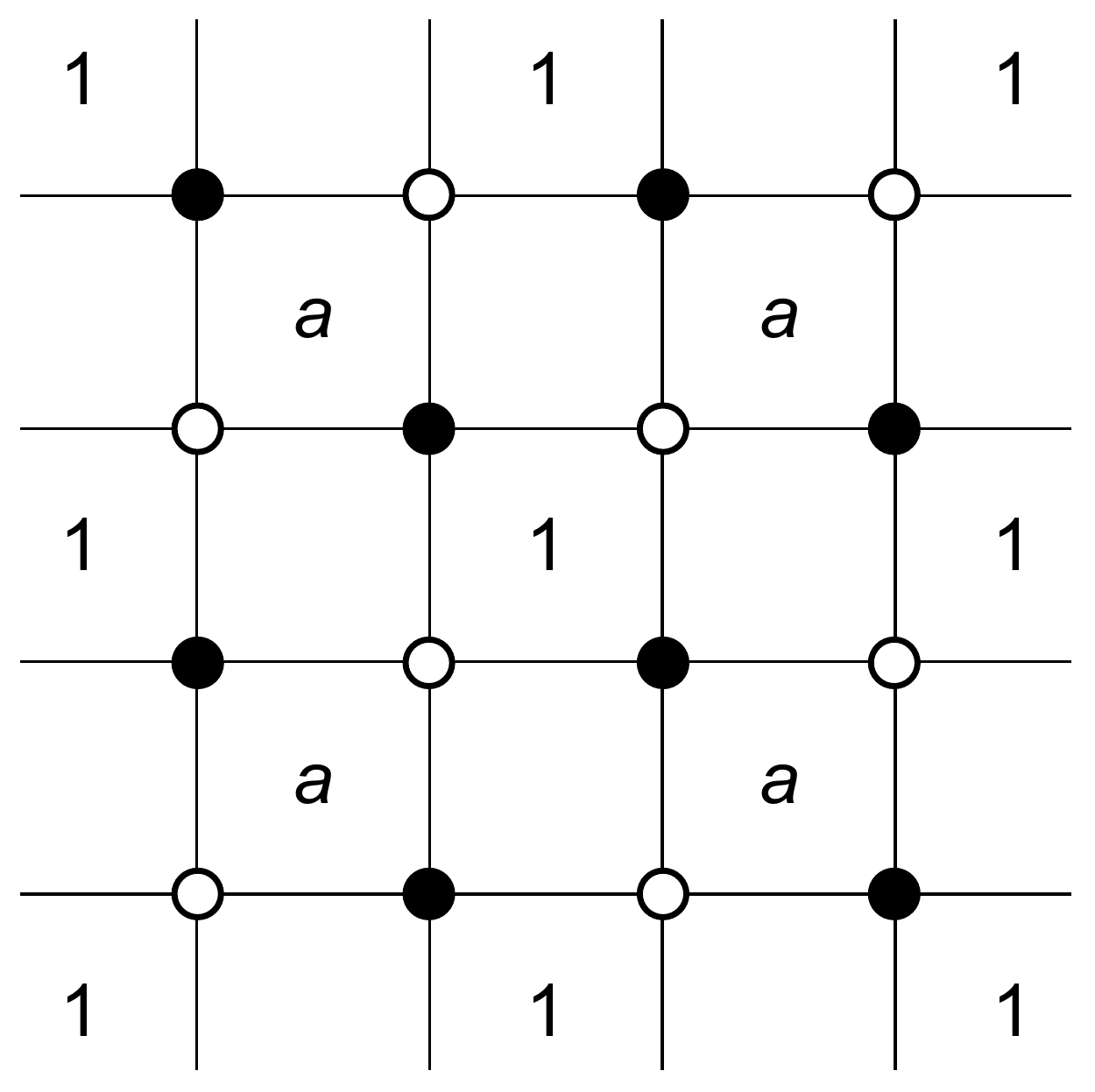}
	\hspace{10mm}
\includegraphics[height=5cm]{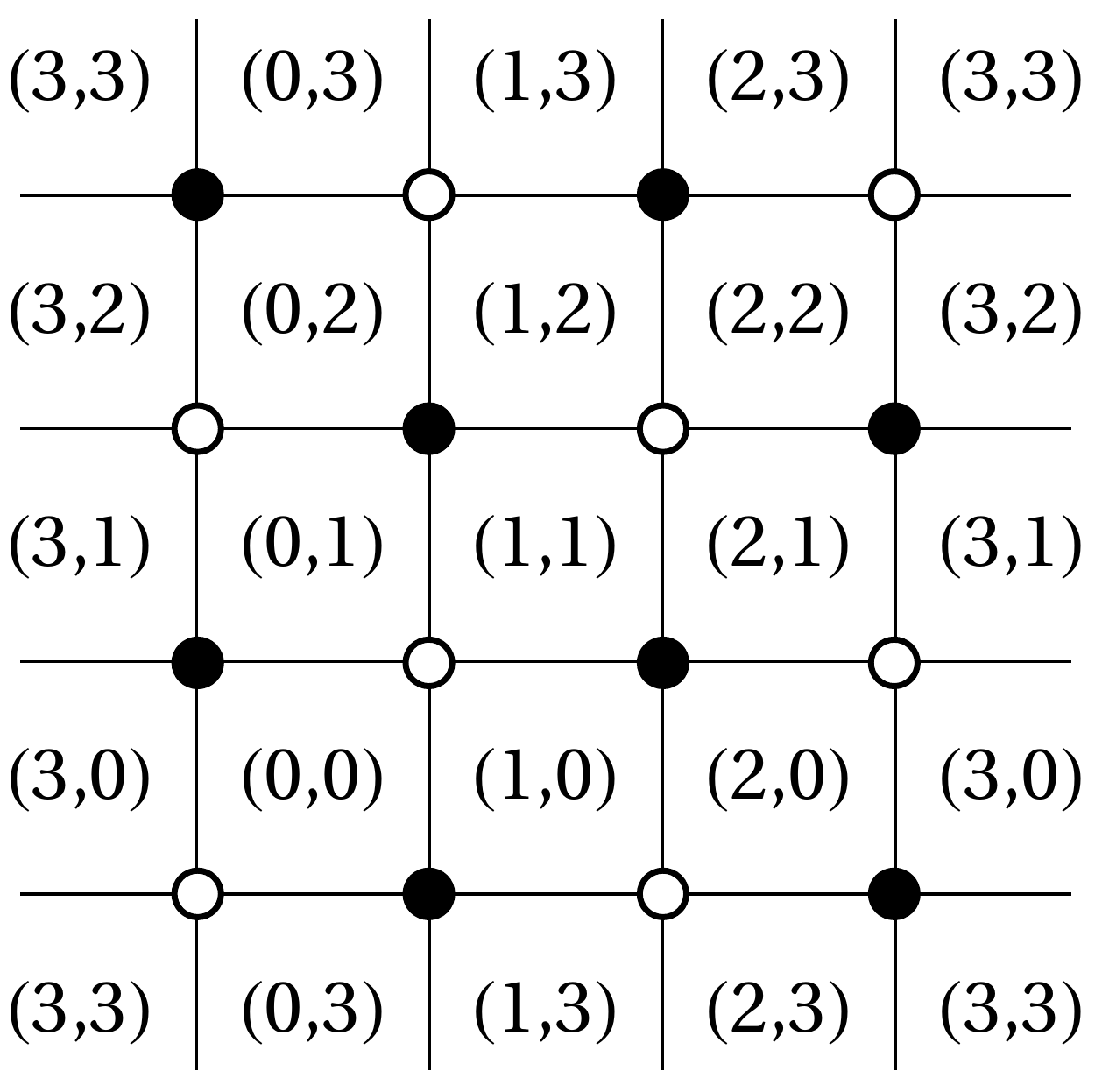}
\caption{The left figure shows the two-periodic weights on $\mathbb{T}_2$: edges incident to the face labeled $a$ take edge weight $a$ while edges incident to the face labeled $1$ take edge weight $1$. The right figure shows the coordinates for the faces of $\mathbb{T}_2$. In both figures, the half edges wrap around the torus.}
\label{fig:weightsfaces}
\end{center}
\end{figure}
Then, we establish that the weight of
an edge is given by the label of the even face it belongs to. Note
that weights are $2$-periodic, i.e. they are left invariant by a horizontal or vertical translation of even  length.

 For $\Delta:=\Delta^{(L)}=(\Delta_1,\Delta_2)$
such that $\Delta/L$ in $\stackrel\circ{\mathcal P}$ (the interior of
$\mathcal P$), let $\pi^{(L)}_{\Delta}$ be the Boltzmann-Gibbs measure on
$\Omega_L(\Delta_1, \Delta_2 )$ with weight proportional to $
a^{N_a(\eta)}$,
with $N_a(\eta)$ the number of dimers on edges of type $a$. It is known \cite{KOS03} that, if
\[
  \lim_{L\to\infty} \frac{\Delta^{(L)}}L=\rho\in\stackrel\circ{\mathcal P},
  \]
  then the measure $\pi^{(L)}_{\Delta^{(L)}}$ has a limit $\pi_{\rho}$
  as $L\to\infty$. The ``infinite-volume Gibbs measure'' $\pi_\rho$ is
  a probability measure on $\Omega_{\mathbb Z^2}$, the set of perfect
  matchings of the infinite lattice $\mathbb Z^2$, invariant and
  ergodic with respect to translations by vectors
  $v\in(2\mathbb Z)^2$, and convergence holds for all local bounded
  functions $f$.  As discussed in the next section, the average ``height
  function'' under $\pi_{\rho}$ is affine with slope $\rho$. The limit
  measure $\pi_{\rho}$ has determinantal correlation functions, with an
  explicit kernel (that depends on $\rho$ and $a$), see Section
  \ref{sec:compv}.  In the same section we will recall also the
  following fact: with the nomenclature of \cite{KOS03}, the Gibbs
  measure $\pi_\rho$ corresponds to a ``liquid phase'' (with power-law
  decaying correlations) if
\begin{eqnarray}
\label{eq:calL}
  \rho\in \mathcal L:=\left\{
  \begin{array}{lll}
\stackrel \circ{\mathcal P}\setminus\{0\} & \text{ if } &a<1\\
\stackrel \circ{\mathcal P}&\text{ if }& a=1    
  \end{array}
\right.
\end{eqnarray}
and to a gaseous phase (with exponentially decaying correlations) if $a<1,\rho=0$.
In the following, we use the terms ``rough'' and ``smooth'' instead of ``liquid'' and ``gaseous''.

\section{Dynamics} \label{sec:dynamics}

\subsection{Spider moves and shuffling algorithm}
\label{sec:general}
The dynamics we study is a version of the \emph{shuffling algorithm
  for the Aztec diamond}~\cite{EKLP92, Ciu98, Pro03} but reworked for the
torus.  Its definition is based on two well-known transformations on
weighted graphs, that we recall here in a general context; see also
\cite{GK13}.  We will come back to the
periodized square lattice with $2$-periodic weights in Section
\ref{sec:actual}.

Here are the two
transformations  of a graph with weighted edges into a new graph
with new edge weights:
\begin{enumerate}
\item (Spider move) Suppose that a graph contains a square face $f$
  with positive edge weights $a,b,c$ and $d$ where the labeling is clockwise
  around the face starting from the topmost horizontal edge.  We
  replace the  face $f$ by a smaller square with edge weights
  $A, B,C $ and $D$ (with the same convention as for $f$),
  and add an edge, with edge weight $1$, between each vertex of the
  smaller square and its original vertex.  Call these added edges
  \emph{legs} and see Fig.\ref{fig:spider} for a
  diagram of this move.  Then, set $A=c/(a c+b d)$, $B=d/(a c+b d)$,
  $C=a/(a c+b d)$ and $D=b/(a c+b d)$. This transformation is called the
  \emph{spider move at face $f$}.
\begin{figure}
	\begin{center}
		\includegraphics[height=4cm]{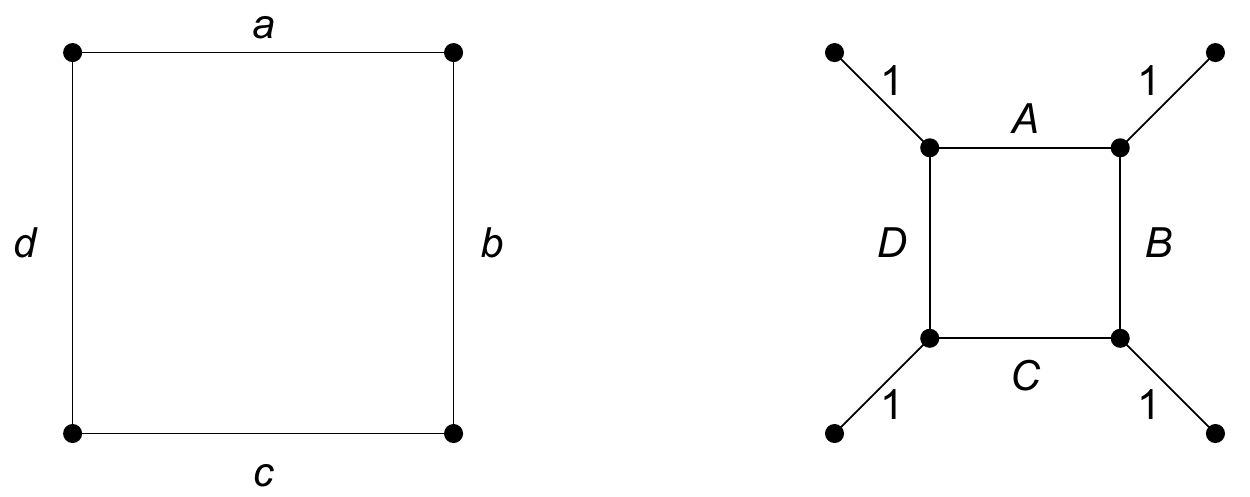}
		\caption{The spider move transformation.}
		\label{fig:spider}
	\end{center}
\end{figure}
  
\item (Edge contraction) Given a weighted graph, define a new
  weighted graph as follows: for any two-valent vertex with the
  incident edges having weight $1$, contract the two incident
  edges. See Fig. \ref{fig:contract}. This procedure is
  called \emph{edge contraction}. 
\end{enumerate}
\begin{figure}
	\begin{center}
		\includegraphics[height=2cm]{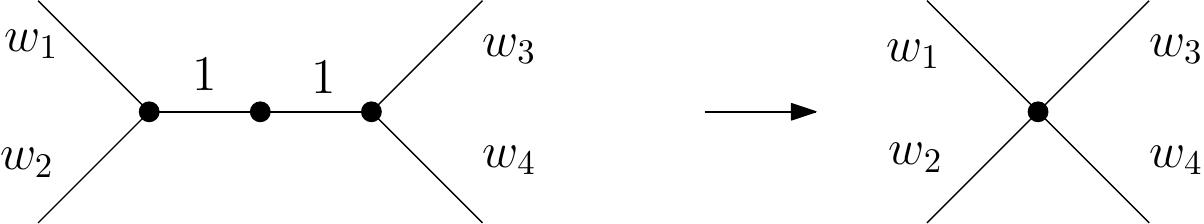}
		\caption{The edge contraction transformation. Weights $w_i$ of the uncontracted edges stay the same.}
		\label{fig:contract}
	\end{center}
\end{figure}

To introduce the dynamics, we first define a sequence of
transformations of the edge weights of the graph $\mathbb T_L$, built
via spider moves and edge contractions. 
We label the faces using Cartesian coordinates $(i,j)$ (both modulo $2L$) so that even faces correspond to $i+j \mod 2=0$ while odd faces correspond to $i+j \mod 2=1$.  For the face $(i,j)$ with $i,j \in \{0,1,\dots, 2L-1\}$, the face to its right has coordinates $(i+1\mod 2L,j)$ while the face above it has coordinates $(i,j+1\mod 2L)$; see Fig.~\ref{fig:weightsfaces}.

For $(i,j) \in \{0,1,\dots,2L-1\}^2$ and $k \geq 0$, let
$w_{i,j;k}^a, w_{i,j;k}^b, w_{i,j;k}^c$ and $w_{i,j;k}^d$ be positive
real numbers
and
$$\Delta_{i,j;k}= w_{i,j;k}^a w_{i,j;k}^c +w_{i,j;k}^b w_{i,j;k}^d.$$
Let
$$\mathtt{w}_k=\{(w_{i,j;k}^a, w_{i,j;k}^b, w_{i,j;k}^c,w_{i,j;k}^d):
(i,j) \in\{0,1,\dots,2L-1\}^2\}$$ where the 4-tuple
$(w_{i,j;k}^a, w_{i,j;k}^b, w_{i,j;k}^c, w_{i,j;k}^d)$ denotes the edge weight around the face $(i,j)$ and at a time $k$, where we use the same convention $a, b, c$ and $d$ as given in
Fig.~\ref{fig:spider}.   Note that $\mathtt{w}_k$ is completely determined by the edge weights around even (resp. odd) faces; we keep track of the weights around both the even and odd faces for convenience.  

The relation between $\mathtt{w}_k$ and
$\mathtt{w}_{k+1}$ is, by definition
\begin{multline}
	\label{eq:wktowk+1}
	(w_{i,j;k+1}^a, w_{i ,j;k+1}^b, w_{i,j;k+1}^c,w_{i,j;k+1}^d)\\:= \left( \frac{w_{i,[j+1]_{2L};k}^a}{\Delta_{i,[j+1]_{2L};k}} ,  \frac{w_{[i+1]_{2L},j;k}^b}{\Delta_{[i+1]_{2L},j;k}}, \frac{w_{i,[j-1]_{2L};k}^c}{\Delta_{i,[j-1]_{2L};k}}, \frac{w_{[i-1]_{2L},j;k}^d}{\Delta_{[i-1]_{2L},j;k}} \right)
\end{multline}
for $k\geq 0$, $i+j\mod 2=(k+1) \mod 2$ and where we have used
$[r]_{2L}=r\mod 2L$ for compactness of notation. Since the weights on
$\mathbb{T}_L$ are fully determined by the edge weights on either the
odd faces or the even faces, then it follows that all weights
$\{\mathtt{w}_k\}_{k\ge1}$ are determined by $\mathtt{w}_0$.  Note
also that the change of weights from $\mathtt{w}_k$ to
$\mathtt{w}_{k+1}$ is equivalent to first applying the spider move to
every face of $\mathbb T_L$ with the same parity (even or odd) as $k$
and then applying the edge contraction transformation to the graph
thus obtained.

For $k\geq 0$ an  integer of even/odd parity $\sigma$, we define a random map
$\Omega_L\ni \eta \mapsto {F}_k(\eta)\in\Omega_L$ through the
following three steps, cf. Fig.~\ref{fig:spiderconfigs} (only the third one is actually random):
\begin{enumerate}
\item [(Deletion step)] All pairs of parallel dimers of $\eta$
  covering two of the four boundary edges of any face of parity
  $\sigma$ are removed.
\item [(Sliding step)]  For every  face of parity $\sigma$ with only one boundary
  edge covered by a dimer of $\eta$,  slide this dimer across that
  face.
\item [(Creation step)] For each $(i,j)\in\{0,1,\dots,2L-1\}^2$, if the
  face of parity $\sigma$ and coordinates $(i,j)$ has no dimers of
  $\eta$ covering any of the four boundary edges, add two parallel
  vertical dimers to the face with probability
  $w^{b}_{i,j;k} w^{d}_{i,j;k}/\Delta_{i,j;k}$ or two parallel
  horizontal dimers with probability
  $w^{a}_{i,j;k} w^{c}_{i,j;k}/\Delta_{i,j;k}$ (the operations are
  performed independently for each $(i,j)$ and $k$).
\end{enumerate}

\begin{figure}
\begin{center}
\includegraphics[height=1.6cm]{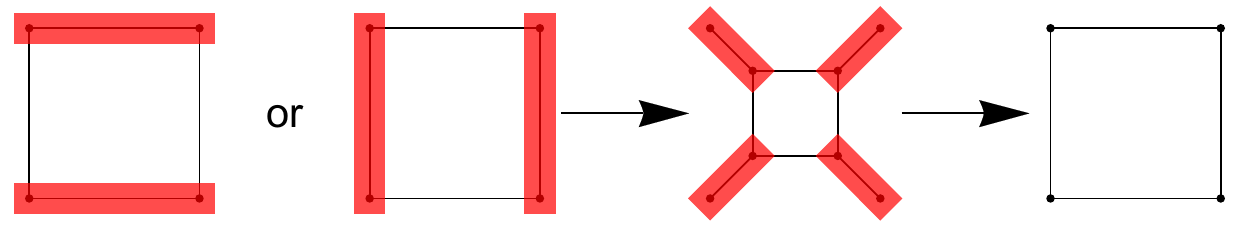}
\vspace{10mm}

	\includegraphics[height=1.6cm]{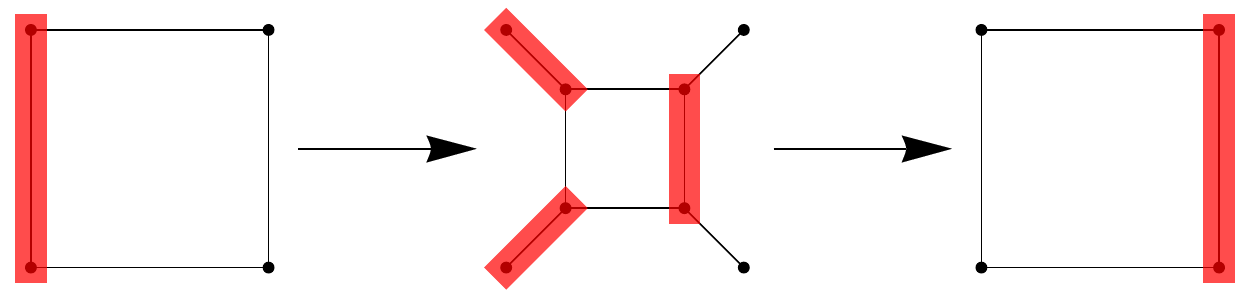}
\vspace{10mm}

\includegraphics[height=1.6cm]{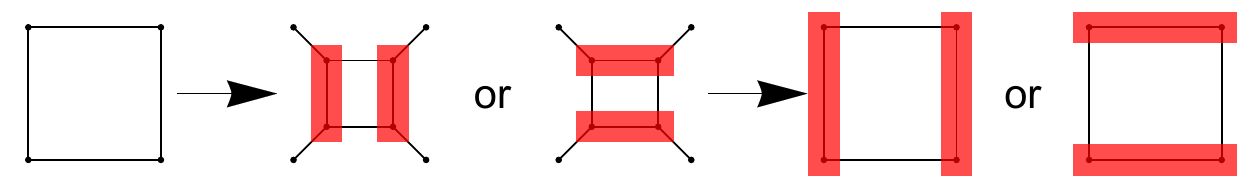}
\caption{The three possible configurations around a face of the same parity as time $k$, and the effect of the transformation $F_k$ (in the second
  line, similar transformations hold when the dimer is on the top,
  bottom or right edge). Red edges denote dimers.
}
\label{fig:spiderconfigs}
\end{center}
\end{figure}

Let ${\pi}_{\Delta;k}^{(L)}$ denote the probability measure (on $\Omega_L(\Delta)$) of the dimer model  on $\mathbb T_L$ with 
weights $\mathtt{w}_k$ and height change $\Delta=(\Delta_1,\Delta_2)$. The following property is 
crucial for the study of the growth model defined in next section:
\begin{prop} \label{prop:general}
Suppose that $\eta \sim {\pi}_{\Delta;k}^{(L)}$.  Then,   ${F}_k(\eta)\sim {\pi}_{-\Delta;k+1}^{(L)}$.
\end{prop}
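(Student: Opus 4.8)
The plan is to recognize $F_k$ as the random map that realizes, configuration by configuration, the classical weight--preserving correspondence between dimer coverings that underlies the composition ``spider move at every face of parity $\sigma$, followed by edge contraction'' (the domino shuffling / urban renewal move). By the discussion around \eqref{eq:wktowk+1}, this composition turns the weighted graph $(\mathbb T_L,\mathtt w_k)$ into $(\mathbb T_L,\mathtt w_{k+1})$, once the doubly transformed lattice is identified with $\mathbb T_L$ in the canonical way; under that identification the roles of even and odd faces are exchanged -- this is why the next map $F_{k+1}$ acts on faces of the opposite parity -- and the two reference cycles $C_1,C_2$ are reversed, so that $\Delta_1,\Delta_2$ change sign. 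Granting this, Proposition~\ref{prop:general} becomes the statement that pushing the Boltzmann--Gibbs measure with edge weights $\mathtt w_k$ on $\Omega_L(\Delta)$ forward through $F_k$ produces the Boltzmann--Gibbs measure with edge weights $\mathtt w_{k+1}$ on $\Omega_L(-\Delta)$.

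The first step is to localize the problem to a single face. Two distinct faces of $\mathbb T_L$ of the same parity share at most a vertex and never an edge, and every edge of $\mathbb T_L$ lies on the boundary of exactly one face of parity $\sigma$. As a consequence: a dimer covering $\eta$ is equivalent to the data, for each face $f$ of parity $\sigma$, of the restriction $\eta|_{\partial f}$, which is one of seven local types (two parallel horizontal dimers, two parallel vertical dimers, a single dimer on one of the four edges, or no dimer); each of the three operations defining $F_k$ acts on $\eta|_{\partial f}$ through $f$ and the weights $w_{i,j;k}^a,w_{i,j;k}^b,w_{i,j;k}^c,w_{i,j;k}^d$ at $f$ only, independently over such faces; and the Boltzmann weight factorizes, $\prod_{e\in\eta}w_e=\prod_{f\text{ of parity }\sigma}\prod_{e\in\partial f\cap\eta}w_e$. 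It therefore suffices to check, for a single face $f$ at coordinates $(i,j)$ with weights $(a,b,c,d)=(w_{i,j;k}^a,w_{i,j;k}^b,w_{i,j;k}^c,w_{i,j;k}^d)$, that the local random rule prescribed by $F_k$ transports the weighted measure on the seven local types of $f$ onto the local types of the transformed face, up to the common factor $\Delta_{i,j;k}=ac+bd$ occurring in the denominators of \eqref{eq:wktowk+1}.

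This local verification is a finite check, nothing more than the urban--renewal correspondence at a single square face: a face carrying two parallel dimers is emptied by the deletion step, the weight $ac$ (resp.\ $bd$) of such a horizontal (resp.\ vertical) pair being exactly what is absorbed into the normalization via the denominators $\Delta_{i,j;k}$ of the neighbouring new weights in \eqref{eq:wktowk+1}; a face carrying a single dimer has that dimer transported deterministically across the face by the sliding step, which is a weight--preserving bijection between the matching local types on the two graphs; and a face with no dimer is filled by the creation step with two horizontal dimers with probability $w_{i,j;k}^a w_{i,j;k}^c/\Delta_{i,j;k}=ac/(ac+bd)$ or two vertical dimers with probability $bd/(ac+bd)$. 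These are genuine probabilities, summing to one by the very definition of $\Delta_{i,j;k}$, and their ratio $ac:bd$ equals the ratio $AC:BD$ of the Boltzmann weights of the two filled local types of the transformed face, where $A=c/(ac+bd)$, $B=d/(ac+bd)$, $C=a/(ac+bd)$, $D=b/(ac+bd)$ are the spider--move weights. Along the way one also checks that $F_k(\eta)$ is again a perfect matching of $\mathbb T_L$, with no vertex left uncovered or covered twice, by inspecting how the seven local types of two same--parity faces meeting at a common vertex are transformed.

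Multiplying the per--face identities over all faces of parity $\sigma$ (and matching the global normalizing constants, which differ only by the product of the $\Delta_{i,j;k}$ over faces of parity $\sigma$) shows that $F_k$ pushes $\pi_{\Delta;k}^{(L)}$ forward to the Boltzmann--Gibbs measure with weights $\mathtt w_{k+1}$ on the transformed lattice; transporting the latter back through the canonical identification with $\mathbb T_L$, and using that the height change has been negated, gives $\pi_{-\Delta;k+1}^{(L)}$, which is the claim. I expect the real obstacle to be not the probabilistic content -- which is the elementary local computation above -- but the geometric bookkeeping behind the first paragraph: setting up precisely the isomorphism between $\mathbb T_L$ and the lattice obtained after the spider moves and the edge contractions, checking its compatibility with the toroidal periodicity and with the labelling of faces used in \eqref{eq:wktowk+1}, and verifying that under this isomorphism $(\Delta_1,\Delta_2)$ is sent to $(-\Delta_1,-\Delta_2)$.
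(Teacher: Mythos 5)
Your plan is essentially the paper's proof: the paper also reduces everything to the identity that, for each target configuration $\eta'$, the total weight transported by $F_k$ factorizes face by face, with the deletion/sliding/creation steps matching the spider-move weights $A,B,C,D$ up to one factor of $\Delta_{i,j;k}$ per even face, so that the push-forward of $\pi^{(L)}_{\Delta;k}$ is the Gibbs measure for $\mathtt w_{k+1}$ with global constant $N=\prod_{i,j}\Delta_{i,j;k}$ (the paper organizes this ``backwards'', fixing $\eta'$ and summing over the two orientations at the faces where $\eta'$ is empty, rather than pushing forward per face, but the computation is the same). The one point you defer as ``geometric bookkeeping'' --- that the height change becomes $-\Delta$ --- is the only place where your sketch is off: the reason is not that the reference cycles $C_1,C_2$ get reversed under the identification of the transformed graph with $\mathbb T_L$ (they do not), but a direct configuration-level observation, which is how the paper settles it in one line: the deletion and creation steps involve pairs of parallel dimers whose contributions to $\Delta_i(\eta)$ cancel, so only the sliding step affects the height change, and sliding moves a single dimer to the opposite edge of its even face, where $\sigma_e$ has the opposite sign; hence every nonzero contribution to $\Delta_i$ flips sign and $\Delta\mapsto-\Delta$. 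With that substitution your argument matches the paper's.
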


The above proposition is implicitly known but we could not find an explicit statement in the literature. 

\subsection{The growth process}
\label{sec:actual}

Here we turn back to the case where the graph $\mathbb T_L$ has
$2$-periodic edge weights with values $1$ and $0<a\le 1$ as described
in Section \ref{sec:notations}. Coherently with the conventions of
Section \ref{sec:general}, we call such  weighting $\mathtt{w}_0$.

Let ${\bf e_1}$ (resp. ${\bf e_2}$) be the vector from the face
$(0,0)$ to the face $(1,0)$ (resp. to the face $(0,1)$) and
$\tau_n(\eta)$ be the translation of the dimer configuration $\eta$ by
$n_1{\bf e_1}+n_2{\bf e_2}, n=(n_1,n_2)\in\mathbb Z^2$. Note that
${\bf e_1,e_2}$ are just the usual length-one vectors pointing to the
right and up, respectively.  With some abuse of notation, given a face $f$ or edge $e$ we denote $\tau_n(f),\tau_n(e)$ the face/edge translated by $n_1{\bf e_1}+n_2{\bf e_2}$.
Note that if $n_1,n_2$ are both even, then the vector $n$ connects two faces of the same parity and two edges of the same parity (i.e. both with white edge on top or both with black edge on top).

The random map $T$ that
defines our dynamics is given as follows:
\begin{Definition} \label{def:dynamics}
  Given $\eta\in\Omega_L$, we let
  \begin{eqnarray}
    \label{eq:T}
    T(\eta):=\tau_{(-1,-1)}[F_1\circ F_0(\eta)].
  \end{eqnarray}
\end{Definition}
In other words, we apply first $F_0$, then $F_1$ and then we shift the
configuration one step down and one step to the left on $\mathbb T_L$.
Given an initial condition $\eta(0)\in\Omega_L$, we let
$\eta(k),k\in\mathbb N$ denote the configuration at time $k$, i.e.
the result of the application of $k$ i.i.d. copies of the map $T$ to
$\eta(0)$.

We begin with the basic properties of the dynamics:
\begin{Theorem}
  \label{th:Mchain}
  \label{prop:T}
  The Markov chain defined by the transformation $T$ is ergodic on each sector $\Omega_L(\Delta)$, i.e. it connects any two elements in $\Omega_L(\Delta)$. Moreover, if
   $\eta\sim \pi^{(L)}_{\Delta}$ then $T(\eta)\sim \pi^{(L)}_{\Delta}$.
 \end{Theorem}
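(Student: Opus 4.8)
The plan is to establish the two assertions of Theorem~\ref{th:Mchain} separately, relying on Proposition~\ref{prop:general} for the stationarity part.

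\emph{Stationarity.} First I would observe that $\mathtt{w}_0$, the $2$-periodic weighting with values $1$ and $a$, is a fixed point of the period-$2$ update rule: applying the spider move plus edge contraction to all even faces and then to all odd faces returns, up to a lattice translation by $(-1,-1)$, the same $2$-periodic weighting. This is exactly the special feature of the two-periodic torus alluded to in the introduction, and it should be checked by a direct computation using \eqref{eq:wktowk+1}: one verifies that $\mathtt{w}_1$ is the ``$a\leftrightarrow$ something'' weighting and $\mathtt{w}_2=\tau_{(1,1)}\mathtt{w}_0$ (equivalently, $\mathtt w_0$ is invariant after composing the two half-steps with the shift by $(-1,-1)$). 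Granting this, apply Proposition~\ref{prop:general} twice: if $\eta\sim\pi^{(L)}_{\Delta;0}=\pi^{(L)}_\Delta$, then $F_0(\eta)\sim\pi^{(L)}_{-\Delta;1}$, and then $F_1(F_0(\eta))\sim\pi^{(L)}_{\Delta;2}$. Finally, since $\mathtt w_2$ differs from $\mathtt w_0$ by the translation $\tau_{(1,1)}$, applying $\tau_{(-1,-1)}$ turns $\pi^{(L)}_{\Delta;2}$ (which is the Gibbs measure for weights $\mathtt w_2$ with height change $\Delta$) back into $\pi^{(L)}_{\Delta}$ — one only has to check that the slope observables $\Delta_1,\Delta_2$ defined via the paths $C_1,C_2$ are unchanged by the global shift, which holds because the weights and the path homology classes are shift-covariant. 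Hence $T(\eta)\sim\pi^{(L)}_\Delta$.

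\emph{Ergodicity.} For the irreducibility claim, the cleanest route is to show that $T$ (or a power of it) has a strictly positive probability of transforming any $\eta\in\Omega_L(\Delta)$ into any fixed reference configuration $\eta_0\in\Omega_L(\Delta)$, and conversely; then any two configurations in the same sector communicate. I would argue that $F_0$ and $F_1$ each have the property that, for any face of the active parity in the empty (no boundary dimer) local configuration, both the ``two horizontal'' and ``two vertical'' creation outcomes occur with positive probability (the creation probabilities $w^aw^c/\Delta$ and $w^bw^d/\Delta$ are both strictly positive since the weights are positive). Combined with the deterministic deletion/sliding steps, this should let one realize, in a bounded number of steps of $T$, any desired local rearrangement that preserves $\Delta$; since the height change $\Delta$ is the only conserved quantity (two tilings of $\mathbb T_L$ with the same $\Delta$ differ by a sequence of elementary square flips, and each flip can be implemented by the deletion/creation mechanism), this yields irreducibility on $\Omega_L(\Delta)$. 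An alternative, and perhaps shorter, argument is to quote that $\pi^{(L)}_\Delta$ has full support on $\Omega_L(\Delta)$ together with the stationarity just proven and a suitable aperiodicity/accessibility statement, but making the ``connects any two elements'' claim precise really does require exhibiting explicit moves, so I would include the flip-chain argument.

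The main obstacle I anticipate is the bookkeeping in the first step: carefully tracking how the spider-move/edge-contraction update \eqref{eq:wktowk+1} acts on the labeled even/odd faces of $\mathbb T_L$, verifying that after the two half-steps the weighting is $\tau_{(1,1)}\mathtt w_0$ rather than some other period-$2$ pattern, and confirming that the combination with $\tau_{(-1,-1)}$ in Definition~\ref{def:dynamics} exactly compensates this — i.e. that $T$ genuinely maps the weight configuration $\mathtt w_0$ to itself and not merely to a translate. One must also be careful that the relabeling of which faces are ``even'' versus ``odd'' after the shift is consistent, and that the orientation conventions ($\sigma_e$, the clockwise $a,b,c,d$ labeling) are preserved so that Proposition~\ref{prop:general} can indeed be applied with the matching time parities $k=0$ and $k=1$. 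The ergodicity part is comparatively routine once the move set is spelled out.
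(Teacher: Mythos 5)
Your stationarity argument is essentially the paper's proof: the same computation via \eqref{eq:wktowk+1} showing that $\mathtt w_1$ is the weighting with ``$a$'' and ``$1$'' faces interchanged and that $\mathtt w_2$ is, up to an overall factor $2c$, the translate of $\mathtt w_0$ by ${\bf e_1}+{\bf e_2}$, followed by two applications of Proposition~\ref{prop:general} ($\Delta\to-\Delta\to\Delta$) and the observation that the shift $\tau_{(-1,-1)}$ restores $\mathtt w_0$ and preserves the sector $\Omega_L(\Delta)$. That half is fine.

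The ergodicity half has a genuine gap. Your key claim is that ``each flip can be implemented by the deletion/creation mechanism'', so that any $\Delta$-preserving local rearrangement can be realized in a bounded number of applications of $T$. But $T$ is a fully parallel update: the deletion and sliding steps are deterministic and act simultaneously on \emph{every} face of the active parity, and the only randomness is the orientation choice at faces that happen to carry no boundary dimer. Consequently you cannot perform a single elementary rotation while leaving the rest of the configuration untouched --- one application of $T$ typically moves dimers everywhere --- and nothing in your sketch shows that a configuration differing from $\eta$ by one flip (let alone an arbitrary reference configuration $\eta_0$) is reachable with positive probability in finitely many steps. Your fallback (full support of $\pi^{(L)}_\Delta$ plus stationarity plus ``accessibility'') also does not yield irreducibility: the identity map preserves every measure. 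The paper avoids constructing explicit paths altogether. It argues by contradiction: if there were two disjoint invariant classes, flip-connectivity of $\Omega_L(\Delta)$ would produce $\eta,\eta'$ in different classes differing by a single rotation at one face $f$. If $f$ is even, the deletion step of $F_0$ erases the discrepancy, so the updates can be coupled with $T(\eta)=T(\eta')$, putting the image in both classes. If $f$ is odd, the pair of parallel dimers of $\eta$ at $f$ is produced in the creation step of $F_1$, which with positive probability could have chosen the opposite orientation, so $\eta$ and $\eta'$ share a common $T$-preimage; again the classes intersect. To repair your proof you would need either such a coupling/preimage argument or an honest construction of positive-probability trajectories between arbitrary states, which is precisely the step you left unproved.
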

 It is easy to deduce that stationarity holds also for the infinite
 dynamics. Indeed, note that the definitions of $F_0,F_1$ and $T$ make
 perfect sense as maps from $\Omega_{\mathbb Z^2}$ to itself (with
 $\Omega_{\mathbb Z^2}$ the set of perfect matchings of the infinite
 lattice $\mathbb Z^2$).  Then:
 \begin{Theorem}
   \label{th:invinf}
   For every local function $g$ and for any slope $\rho$ in $\stackrel \circ{\mathcal P}$ one has
  \begin{eqnarray}
    \label{eq:invarianza}
    \mathbb E_{\pi_{\rho}}g(T(\eta))=\pi_{\rho}(g(\eta)).
  \end{eqnarray}
 \end{Theorem}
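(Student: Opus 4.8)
The plan is to deduce the infinite-volume statement from the finite-volume invariance of Theorem~\ref{th:Mchain} by a limiting argument, using the fact that $T$ is built from the \emph{local} maps $F_0,F_1$ together with a fixed translation, so $g\circ T$ is again a local function whenever $g$ is. First I would fix a local function $g$ depending only on the dimers incident to finitely many faces inside some box $\Lambda$. Since the deletion, sliding and creation steps in the definition of $F_k$ only move dimers across a single face, and since $T=\tau_{(-1,-1)}\circ F_1\circ F_0$ involves just two such sweeps followed by a bounded shift, the composition $g\circ T$ is measurable with respect to the dimer configuration in a slightly enlarged (but still finite) box $\Lambda'\supset\Lambda$; crucially, $\Lambda'$ does not depend on $L$. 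This reduces the claim to the identity $\mathbb E_{\pi_\rho}(\tilde g(\eta))=\pi_\rho(g(\eta))$ where $\tilde g:=g\circ T$ is local.

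Next I would invoke the convergence $\pi^{(L)}_{\Delta^{(L)}}\to\pi_\rho$ for local bounded functions, recalled after \eqref{eq:calL} (this is the Kenyon–Okounkov–Sheffield result \cite{KOS03}), choosing $\Delta^{(L)}$ with $\Delta^{(L)}/L\to\rho\in\stackrel\circ{\mathcal P}$. Applied to both $g$ and $\tilde g$ this gives
\begin{eqnarray*}
\pi^{(L)}_{\Delta^{(L)}}(g)\xrightarrow{L\to\infty}\pi_\rho(g),\qquad
\pi^{(L)}_{\Delta^{(L)}}(\tilde g)\xrightarrow{L\to\infty}\pi_\rho(\tilde g).
\end{eqnarray*}
On the other hand, Theorem~\ref{th:Mchain} asserts that for each $L$ the measure $\pi^{(L)}_{\Delta^{(L)}}$ is invariant under $T$ on the torus $\mathbb T_L$, i.e. $\mathbb E_{\pi^{(L)}_{\Delta^{(L)}}}(g(T\eta))=\pi^{(L)}_{\Delta^{(L)}}(g)$. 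The only point needing care here is that $\tilde g=g\circ T$, viewed as a function on $\Omega_L$ for large $L$, coincides with the function $g\circ T$ viewed on $\Omega_{\mathbb Z^2}$ when restricted to configurations that agree inside $\Lambda'$ — this is exactly the $L$-independence of the dependence region established in the first step, and it holds as soon as $2L$ exceeds the diameter of $\Lambda'$. Hence $\pi^{(L)}_{\Delta^{(L)}}(\tilde g)=\pi^{(L)}_{\Delta^{(L)}}(g)$ for all large $L$.

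Combining the two displays: the left sides of the convergence statements are equal for all large $L$, so the limits are equal, i.e. $\pi_\rho(\tilde g)=\pi_\rho(g)$, which is precisely \eqref{eq:invarianza}. Since local functions are dense (in the sense that they determine the measure, and $\pi_\rho$ is a probability measure on a compact configuration space), this suffices. The main — really the only — subtlety is the first step: verifying that $g\circ T$ genuinely has a finite, $L$-independent region of dependence. This requires checking that the creation step at face $(i,j)$ reads only the \emph{local} data (which dimers, if any, sit on the four boundary edges of that face) and injects a fresh independent bit, so that the randomness can be coupled across all $L$ on the common finite window; and that the deterministic deletion/sliding steps plus the shift $\tau_{(-1,-1)}$ do not spread dependence beyond a bounded neighborhood. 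All of this is immediate from the explicit description of $F_k$ in Section~\ref{sec:general}, so the proof is short. I would also remark that ergodicity of the infinite-volume dynamics is not asserted here and is not needed; only the invariance identity \eqref{eq:invarianza} is claimed.
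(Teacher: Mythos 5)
Your proposal is correct and follows essentially the same route as the paper: exploit the locality of $T$ (the value of $g(T(\eta))$ on the support of $g$ depends only on $\eta$ in a bounded, $L$-independent neighborhood, with the creation-step randomness coupled on that window), then pass to the limit using the local convergence $\pi^{(L)}_{\Delta^{(L)}}\to\pi_\rho$ together with the finite-volume stationarity of Theorem~\ref{th:Mchain}. The paper phrases the limiting step as a coupling of the finite- and infinite-volume measures on the enlarged window, which is the same mechanism as your convergence-applied-to-$g$ and $g\circ T$ argument.
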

 
 A dimer covering of $\mathbb Z^2$ is naturally associated to a height
 function \cite{KenLectures}; we briefly recall this definition and then
 discuss how the height function evolves under the dynamics. This will
 provide an interpretation of the Markov chain as a two-dimensional
 stochastic growth model. Given $\eta\in \Omega_{\mathbb Z^2}$ one
 associates a height function
 $h_\eta:f\in (\mathbb Z^2)^*\mapsto h_\eta(f)$ (defined on the faces
 $f$ of the graph) by fixing it as $h_\eta(f_0)=0$ on a given
 reference face $f_0$, say the even face with coordinates $(0,0)$, and
 by establishing that its gradients are given as\footnote{Note the
   similarity with \eqref{eq:Delta1}. On the torus, the height is a
   multi-valued function that increases by $\Delta_i(\eta)$ along a
   closed path winding in direction $i=1,2$}
\begin{eqnarray}
  \label{eq:hxy}
  h_\eta(f')-h_\eta(f)=\sum_{e\in \mathcal C_{f\to f'}}\sigma_e(1_{e\in\eta}-1/4),
\end{eqnarray}
where $\mathcal C_{f\to f'}$ is any nearest-neighbor path from $f$ to
$f'$ (it is well-known that the r.h.s. of \eqref{eq:hxy} is
path-independent).

 By construction of the finite-volume measures
$\pi^{(L)}_{\Delta}$, one has
\begin{eqnarray}
  \pi_{\rho}[h_\eta(\tau_n(f))-h_\eta(f)]=\frac{n\cdot\rho}2
\end{eqnarray}
if $n\in(2\mathbb Z)^2$.

From Theorem \ref{th:invinf}, we know that if
$\eta(0)\sim \pi_{\rho}$, the law of the height gradients of $\eta(k)$
is time-independent. However, we have not yet determined how the
additive constant $h_{\eta(k)}(f_0)$ changes with time from the
initial value $h_{\eta(0)}(f_0)=0$.  
Assume from now on that the reference face $f_0$ is even.
To motivate our convention given
in Definition \ref{def:newh} below, we start from the following
observation.
\begin{prop}
  \label{prop:dup}
If $f_1,f_2$ are two even faces and $\eta\in\Omega_{\mathbb Z^2}$, then	
	\begin{align}
  \label{eq:dh1}
& h_{\tau_{(-1,0)}[F_0(\eta)]}(f_2)-h_{\tau_{(-1,0)}[F_0(\eta)]}(f_1)=h_{\eta}(\tau_{(1,0)}(f_2))-h_{\eta}(\tau_{(1,0)}(f_1))\\
  \label{eq:dh2}& h_{T (\eta)}(f_2)-h_{T(\eta)}(f_1)=h_{\tau_{(-1,0)}[F_0(\eta)]}(\tau_{(0,1)}(f_2))-h_{\tau_{(-1,0)}[F_0(\eta)]}(\tau_{(0,1)}(f_1)).
\end{align}
\end{prop}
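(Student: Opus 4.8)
The plan is to reduce both \eqref{eq:dh1} and \eqref{eq:dh2} to a single ``local reflection'' property of the deletion/sliding/creation step, together with the elementary behaviour of the height function under lattice translations. As a preliminary I would record that for every $\mu\in\Omega_{\mathbb Z^2}$, every $v=(v_1,v_2)\in\mathbb Z^2$ and all faces $f_1,f_2$,
\[
h_{\tau_v[\mu]}(f_2)-h_{\tau_v[\mu]}(f_1)=(-1)^{v_1+v_2}\bigl(h_{\mu}(\tau_{-v}(f_2))-h_{\mu}(\tau_{-v}(f_1))\bigr),
\]
which follows from \eqref{eq:hxy} by the change of variables $e\mapsto\tau_{-v}(e)$ in the path sum and the fact that $\sigma_{\tau_v(e)}=(-1)^{v_1+v_2}\sigma_e$: a translation by a vector of odd total parity swaps the two sublattices, hence flips the sign of $\sigma$.

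The core of the argument would be the following claim. Let $F$ be the random map obtained from the three steps (Deletion, Sliding, Creation) relative to faces of a fixed parity $p$, with arbitrary positive creation weights, acting on $\Omega_{\mathbb Z^2}$. Then for every $\eta$, every realization of $F$, and every two faces $g_1,g_2$ of the \emph{opposite} parity $1-p$,
\[
h_{F(\eta)}(g_2)-h_{F(\eta)}(g_1)=-\bigl(h_{\eta}(g_2)-h_{\eta}(g_1)\bigr);
\]
that is, on faces of parity $1-p$ the height function of $F(\eta)$ is that of $\eta$ reflected, up to an additive constant (which in general depends on $\eta$ and on the randomness). To prove it I would first note that it suffices to treat $g_1,g_2$ that are both neighbours of a common parity-$p$ face $f$: any two parity-$(1-p)$ faces are linked by a chain of such pairs — take a nearest-neighbour path of faces joining them, which alternates parities, and observe that two consecutive parity-$(1-p)$ faces on it are both neighbours of the parity-$p$ face in between — and the identity telescopes along the chain. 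For such $g_1,g_2$ the quantities $h_{F(\eta)}(g_i)-h_{F(\eta)}(f)$ and $h_\eta(g_i)-h_\eta(f)$ are read off from the occupations of the four edges of $f$ via \eqref{eq:hxy} (the four values of $\sigma$ around $f$ being $+1,+1,-1,-1$, up to an overall sign fixed by $p$), while by Fig.~\ref{fig:spiderconfigs} the four-edge occupation pattern of $F(\eta)$ at $f$ is obtained from that of $\eta$ at $f$ by the rules ``parallel pair $\mapsto$ empty'', ``single dimer $\mapsto$ opposite edge'', ``empty $\mapsto$ one of the two parallel pairs''. Substituting each of these finitely many cases into the displayed identity is then a direct check; the only point requiring care is the empty case, where $F$ is genuinely random, and one must verify the identity for \emph{both} creations — which works because the two creations differ only in the value of $h_{F(\eta)}$ at $f$ itself and leave the height differences among the four neighbours $g$ of $f$ unchanged.

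Granting these two ingredients, I would conclude as follows. For \eqref{eq:dh1}: translation covariance with $v=(-1,0)$ and $\mu=F_0(\eta)$ rewrites the left-hand side as $-\bigl(h_{F_0(\eta)}(\tau_{(1,0)}(f_2))-h_{F_0(\eta)}(\tau_{(1,0)}(f_1))\bigr)$; since $f_1,f_2$ are even and $(1,0)$ is odd, the faces $\tau_{(1,0)}(f_i)$ are odd, so the local reflection property applied to $F_0$ (here $p=0$) turns this into $h_\eta(\tau_{(1,0)}(f_2))-h_\eta(\tau_{(1,0)}(f_1))$, the right-hand side. For \eqref{eq:dh2}: write $T(\eta)=\tau_{(-1,-1)}[\mu]$ with $\mu=F_1(F_0(\eta))$; since $(-1,-1)$ has even total parity, translation covariance gives $h_{T(\eta)}(f_2)-h_{T(\eta)}(f_1)=h_\mu(\tau_{(1,1)}(f_2))-h_\mu(\tau_{(1,1)}(f_1))$, and now the faces $\tau_{(1,1)}(f_i)$ are even, so the local reflection property applied to $F_1$ (here $p=1$) gives $-\bigl(h_{F_0(\eta)}(\tau_{(1,1)}(f_2))-h_{F_0(\eta)}(\tau_{(1,1)}(f_1))\bigr)$; on the other hand, applying translation covariance with $v=(-1,0)$ and $\mu=F_0(\eta)$ to the right-hand side of \eqref{eq:dh2}, together with $\tau_{(1,0)}\circ\tau_{(0,1)}=\tau_{(1,1)}$, yields exactly the same expression.

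The step I expect to be the main obstacle is the local reflection property: extracting from Fig.~\ref{fig:spiderconfigs} that the four-edge occupation pattern of $F(\eta)$ around a parity-$p$ face depends only on that of $\eta$ around the same face, and checking that the randomness of the empty case does not affect the relevant height differences. Everything else is bookkeeping of signs and of the shifts $(-1,0),(0,-1),(1,1)$.
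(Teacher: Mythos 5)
Your argument is correct, and the steps you flag as needing verification do go through: around a face of the updated parity, the three rules (parallel pair deleted, single dimer slid to the opposite edge, empty face filled by one of the two pairs) indeed negate every height difference among that face's four neighbours, the two random creations producing identical differences there, and the telescoping along a face path together with the sign $(-1)^{v_1+v_2}$ in your translation identity then gives both \eqref{eq:dh1} and \eqref{eq:dh2} by bookkeeping. Your route is organized differently from the paper's. The paper reduces \eqref{eq:dh1} to even faces $f_1,f_2$ adjacent across a single odd face, writes both sides as differences of two edge indicators, and checks directly from the sliding step that the relevant events for $\tau_{(-1,0)}[F_0(\eta)]$ coincide with events for $\eta$; \eqref{eq:dh2} is then dismissed as essentially identical. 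You instead factor the composite map into a lattice translation and the update $F_0$ (or $F_1$), and isolate two reusable lemmas: parity-signed translation covariance of height gradients, and the negation of height gradients between faces of the opposite parity under the update. What your decomposition buys is a uniform treatment of both identities (no ``essentially identical'' step for \eqref{eq:dh2}), and it makes explicit, at the level of the infinite-volume height function, the observation that the paper uses only for the global height change on the torus in the proof of Proposition \ref{prop:general}, namely that the update reverses height changes across faces of the updated parity. The paper's check is shorter for \eqref{eq:dh1} because it only ever tracks the occupation of two specific edges; yours does slightly more case work once, and then both equations follow mechanically.
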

We make therefore the following choice:
\begin{Definition}
  \label{def:newh}
  Given $\eta\in\Omega_{\mathbb Z^2}$, we set
  \begin{eqnarray}
    \label{eq:ddh1}
 &&   h_{\tau_{(-1,0)}[F_0(\eta)]}(f_0):=h_\eta(\tau_{(1,0)}(f_0))\\
    \label{eq:ddh2}
&&    h_{T(\eta)}(f_0):= h_{\tau_{(-1,0)}[F_0(\eta)]}(\tau_{(0,1)}(f_0)).
  \end{eqnarray}
\end{Definition}
This way, the whole height functions of $F_0(\eta)$ and of $T(\eta)$
are completely defined, including the overall additive constant, in terms of that of $\eta$.  
\begin{Remark}
  \label{rem:newh}
  From Proposition \ref{prop:dup}, it follows immediately that
  \eqref{eq:ddh1}-\eqref{eq:ddh2} hold for any even face $f$ if they
  hold for $f_0$; in other words, Definition \ref{def:newh} is
  actually \emph{independent} of the choice of the even face $f_0$.
  One can check that \eqref{eq:ddh1}-\eqref{eq:ddh2} \emph{ do not}, in
  general, hold for odd faces. If they did, the height function
  of $\eta(k)$ would be a deterministic function of the height
  function of $\eta(k-1)$, which is not the case.
\end{Remark}

\subsection{Speed of growth and fluctuations}

\begin{Theorem}[Speed and fluctuations]
  \label{th:vrho}
  Let $f$ be any face of $\mathbb Z^2$, $k\in\mathbb N$ and let for lightness of notation $h_k(\cdot):=h_{\eta(k)}(\cdot)$. Then, for $\rho\in\stackrel\circ{\mathcal P}$,
  \begin{eqnarray}
    \label{eq:vrho}
    v(\rho):=\frac1k\mathbb E_{\pi_{\rho}}(h_{k}(f)-h_0(f))=c_1(\rho)-c_2(\rho)
\end{eqnarray}
with
  \begin{eqnarray}
    \label{eq:c1c2}
    c_1(\rho)=\pi_\rho(e_1\in\eta),\qquad c_2(\rho)=\pi_\rho(e_2\in\eta).
  \end{eqnarray}
where $e_1$ is a horizontal edge with an ``$a$'' face above it and $e_2$ is a vertical edge with an ``$a$'' face to its left.
  One has for any face $f$
  \begin{eqnarray}
    \label{eq:flutt1}
\lim_{u\to\infty}   \limsup_{k\to\infty} \mathbb P_{\pi_{\rho}}(|h_{k}(f)-h_{0}(f)-v(\rho)k|\ge u \sigma(k))=0
  \end{eqnarray}
  where
  \begin{eqnarray}
    \label{eq:N}
    \sigma(k)= \left\{\begin{array}{lll}
      {\sqrt{\log (k+1)}}&\text{if} &
                                                                          \rho\in\mathcal L\\
       1&\text{if} &
                                                                          \rho=0, a<1.
    \end{array}
    \right.
  \end{eqnarray}
  
\end{Theorem}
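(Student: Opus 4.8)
The plan is to extract both the speed formula \eqref{eq:vrho} and the fluctuation bound \eqref{eq:flutt1} from a single observation: the one-step height increment at an even face $f_0$ is, up to a local random quantity that is a function of $\eta$ alone, deterministic. Concretely, I would first unfold Definition \ref{def:newh}: combining \eqref{eq:ddh1} and \eqref{eq:ddh2} and inserting the defining relation $T(\eta)=\tau_{(-1,-1)}[F_1\circ F_0(\eta)]$, one gets $h_{T(\eta)}(f_0)$ expressed via the height function of $\eta$ evaluated at translated faces through the two half-steps $F_0$ and $F_1$. The key point is that the increment $h_{T(\eta)}(f_0)-h_\eta(f_0)$ can be rewritten, using \eqref{eq:hxy}, as a sum over a bounded set of edges near $f_0$ of terms $\sigma_e(1_{e\in\eta}-1/4)$ and of the analogous indicators for $F_0(\eta)$. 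In stationarity, $\pi_\rho$ is $F_0$-, $F_1$- and $T$-invariant (Theorem \ref{th:invinf} and Theorem \ref{th:Mchain}), so each $\pi_\rho(1_{e\in F_j(\eta)})$ equals the corresponding single-edge occupation probability under $\pi_\rho$ for the appropriate translate. After collecting the $\pm 1/4$ contributions — which cancel because the path from $f_0$ to $\tau_{(\cdot)}(f_0)$ used to express the increment traverses as many ``$+$'' as ``$-$'' crossings — one is left with $\mathbb E_{\pi_\rho}(h_{T(\eta)}(f_0)-h_\eta(f_0))=c_1(\rho)-c_2(\rho)$ with $c_1,c_2$ the two edge probabilities named in \eqref{eq:c1c2}. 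Iterating $k$ times and using stationarity of the gradient law gives \eqref{eq:vrho} for even $f$; for odd $f$ one notes that $h_k(f)-h_k(f_0)$ has a time-independent law (its expectation is the affine profile), so the speed is the same.

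For the fluctuation statement I would write the telescoping sum
\begin{equation*}
h_k(f_0)-h_0(f_0)-v(\rho)k=\sum_{j=0}^{k-1}\big(h_{\eta(j+1)}(f_0)-h_{\eta(j)}(f_0)-v(\rho)\big)=\sum_{j=0}^{k-1}X_j,
\end{equation*}
where $X_j:=\xi(\eta(j))$ for a fixed bounded local function $\xi$ (by the computation above, the one-step increment is a bounded function of the configuration in a fixed neighborhood of $f_0$, minus $v(\rho)$, which has zero mean under $\pi_\rho$). Thus the problem reduces to bounding the variance of an additive functional along the stationary Markov chain, $\mathrm{Var}_{\pi_\rho}\big(\sum_{j<k}\xi(\eta(j))\big)=\sum_{i,j<k}\mathrm{Cov}_{\pi_\rho}(\xi(\eta(i)),\xi(\eta(j)))$. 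By stationarity this equals $k\,c(0)+2\sum_{m=1}^{k-1}(k-m)c(m)$ with $c(m):=\mathrm{Cov}_{\pi_\rho}(\xi(\eta(0)),\xi(\eta(m)))$. I would then bound $|c(m)|$ in terms of spatial correlations of $\pi_\rho$: $\xi(\eta(m))$ depends on $\eta(m)$ near $f_0$, hence on $\eta(0)$ near $f_0$ but also — through the $m$-fold application of $T$, which translates by $(-1,-1)$ each step and whose maps $F_0,F_1$ have range $O(1)$ — on a region of $\eta(0)$ of diameter $O(m)$; however, the value of the height \emph{increment} at $f_0$ at time $m$ is determined, up to the affine drift, by the configuration near the face $\tau_{(-m,-m)}(f_0)$, at distance of order $m$ from $f_0$, so $|c(m)|$ is controlled by the decay of dimer correlations of $\pi_\rho$ at distance $\sim m$. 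In the rough phase $\rho\in\mathcal L$ these decay like $m^{-2}$ (the determinantal kernel of \cite{KOS03} gives power-law $|x|^{-1}$ decay of the kernel, hence $|x|^{-2}$ for covariances of local observables), so $\sum_m |c(m)|<\infty$ would give $\mathrm{Var}\le Ck$ — too weak; instead the correct bound is $\mathrm{Var}\le C k\log k$, coming from the fact that the relevant observable is itself a height gradient over a growing region, so one should sum $c(m)\sim 1/m$, giving the $\log k$. In the smooth phase $\rho=0,a<1$, correlations decay exponentially, so $\sum_m|c(m)|<\infty$ and $\mathrm{Var}\le Ck$; but one expects $O(1)$, which requires extra cancellation — the natural route is to realize $h_k(f_0)-h_0(f_0)-v(\rho)k$ itself as a bounded quantity plus a martingale whose increments are exponentially correlated \emph{and} whose partial sums stay bounded because the height in the gas phase is a bounded perturbation of a flat interface; more robustly, one can exhibit a stationary ``corrector'' $G$ with $h_k(f_0)-v(\rho)k = G(\eta(k)) + M_k$ where $M_k$ is an $L^2$-bounded martingale, forcing $\sigma(k)=O(1)$. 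Finally, \eqref{eq:flutt1} follows from these variance bounds via Chebyshev: $\mathbb P(|h_k(f_0)-h_0(f_0)-v(\rho)k|\ge u\sigma(k))\le \mathrm{Var}/(u^2\sigma(k)^2)\le C/u^2\to 0$.

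The main obstacle I anticipate is the smooth-phase case $\rho=0$: getting genuinely $O(1)$ fluctuations (rather than $O(\sqrt{k})$, which a naive additive-functional/exponential-mixing argument gives) requires identifying the right martingale decomposition and showing the ``corrector'' term $G(\eta(k))$ is bounded in $L^2$ uniformly in $k$ — this is where the gaseous nature of $\pi_0$ (exponential decay \emph{and} the fact that the height function is globally flat, not just locally) must be used in an essential way; a clean way to do this is to observe that in the gas phase the height difference between any two faces has uniformly bounded variance, and to express $h_k(f_0)$ as such a difference plus a telescoping martingale, bounding the latter by a second-moment computation using the exponential decay of the kernel. The rough-phase $\log k$ bound, by contrast, should follow fairly directly once one writes the time-covariance $c(m)$ in terms of the $\rho$-dependent determinantal kernel and uses its known $1/|x|$ decay, so the bulk of the work there is bookkeeping of the range of $T^m$ and of which edges enter the one-step increment.
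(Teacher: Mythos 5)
Your derivation of the speed formula \eqref{eq:vrho} is essentially the paper's: unfold Definition \ref{def:newh} as in \eqref{eq:perplesso}, note that each half-step contributes a local height gradient, and take expectations using the fact (Proposition \ref{prop:general} plus Remark \ref{rem:ag}) that $\tau_{(-1,0)}[F_0(\eta)]$ is again distributed as a Gibbs measure with the ``$a$'' and ``$1$'' faces interchanged; the $\pm 1/4$ terms cancel and one is left with $c_1(\rho)-c_2(\rho)$. That part is fine.

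The fluctuation part, however, has a genuine gap, and your own intermediate conclusions signal it. Writing $h_k(f_0)-h_0(f_0)-v(\rho)k=\sum_{j<k}\xi(\eta(j))$ and bounding $\mathrm{Var}$ by $\sum_{i,j}c(|i-j|)$ cannot give \eqref{eq:flutt1}: even if the time-covariances $c(m)$ were absolutely summable you would only get $\mathrm{Var}\le Ck$, i.e.\ fluctuations of order $\sqrt k$, and the bound $Ck\log k$ you call ``correct'' is even weaker — whereas the theorem asserts fluctuations of order $\sigma(k)=\sqrt{\log(k+1)}$ (resp.\ $O(1)$ for $\rho=0$, $a<1$). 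Moreover, your control of $c(m)$ is unjustified: $\xi(\eta(m))$ is \emph{not} a local function of $\eta(0)$ at distance $\sim m$ from $f_0$; it depends on $\eta(0)$ in a region of diameter $O(m)$ \emph{and} on the fresh randomness injected by the creation steps at times $1,\dots,m$, so bounding $c(m)$ would require space-time correlation estimates for the stationary process, which are neither known nor established in the paper. Similarly, in the smooth phase your corrector/martingale ansatz $h_k(f_0)-v(\rho)k=G(\eta(k))+M_k$ with $M_k$ bounded in $L^2$ is not constructed, and a stationary-increment martingale that is $L^2$-bounded must have vanishing increments, so as stated the decomposition begs the question.

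The missing idea is a spatial-averaging comparison, not a temporal one. The paper bounds the variance of the \emph{spatially summed} increments $Q_{\Lambda_\ell}(k)=\sum_{x\in\Lambda_\ell}(h_k(x)-h_0(x))$ over a box of $O(\ell^2)$ even faces: the one-step term $K_{\Lambda_\ell}(k)$ is a sum of single-edge indicators of the \emph{current} (stationary) configuration, so its variance is a purely static quantity controlled by the known spatial decay of dimer-dimer correlations (inverse square in the rough phase, exponential in the smooth phase), giving $\mathrm{Var}(K_{\Lambda_\ell})\le C\ell^2\sigma^2(\ell)$ (Lemma \ref{lemma:vK}); a Cauchy--Schwarz recursion then yields $V_k(k)\le Ck^4\sigma^2(k)$. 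One then transfers from the box average back to the single face $f$ by writing $Q_{\Lambda_k}(k)-\mathbb{E}Q_{\Lambda_k}(k)$ as $|\Lambda_k|\,(Q_f(k)-\mathbb{E}Q_f(k))$ plus fixed-time spatial height-gradient sums $A_k(0),A_k(k)$, whose second moments are $O(k^4\sigma^2(k))$ by the known GFF-type (resp.\ bounded) height fluctuations of $\pi_\rho$ (Lemma \ref{lemma:dh}); since $|\Lambda_k|\sim k^2$, Chebyshev squeezes $|Q_f(k)-\mathbb{E}Q_f(k)|$ down to order $\sqrt u\,\sigma(k)$, which is exactly \eqref{eq:flutt1} and handles both phases in one stroke. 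Without an ingredient of this kind (or genuine space-time correlation bounds), your route cannot reach the stated $\sqrt{\log k}$ and $O(1)$ scales.
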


\begin{Remark}
  \label{rem:vsimm}
  As discussed at the beginning of Section \ref{sec:compv}, the function $v(\cdot)$ satisfies a number of simple reflection symmetries, namely:
  \begin{eqnarray}
    \label{eq:simmv}
    v(\rho_1,\rho_2)&=&-v(\rho_2,\rho_1),\\
    \label{eq:simmv1}
     v(\rho_1,\rho_2)&=&\rho_1-\rho_2-v(-\rho_2,-\rho_1) 
  \end{eqnarray}
  and
  \begin{eqnarray}
    \label{eq:simmv2}
 v(\rho_1,\rho_2)=\rho_2+v(\rho_1,-\rho_2), \quad v(\rho_1,\rho_2)=\rho_1+v(-\rho_1,\rho_2).
  \end{eqnarray}
  As a consequence, $v(\cdot)$ is fully determined by its restriction to the subset
  \[
\mathcal P\cap \{\rho_1\ge0, 0\le \rho_2\le \rho_1\}.
    \]
\end{Remark}

The function $v(\cdot)$ can be explicitly computed (this computation is rather involved and takes the whole Section \ref{sec:compv}):
\begin{Theorem}[Formula for the speed]
  \label{th:v}
  Let $\rho_+=\pi(\rho_1+\rho_2)\in [-\pi,\pi],\rho_-=\pi(\rho_1-\rho_2)\in[-\pi,\pi]$  and $c=a/(1+a^2)\in(0,1/2]$.
 We have
  \begin{eqnarray}
    \label{eq:v}
    v(\rho)=
    \frac{\rho_-}{2\pi}-\frac{\pi-\arccos[\Sigma]}{2\pi}\rm{sign}(\rho_-\rho_+)
  \end{eqnarray}
  where
  \begin{equation}
    \label{eq:Sigma}
    \Sigma=-\frac{\cos(\rho_-)+\cos(\rho_+)+\sqrt{
    (\cos(\rho_-)-\cos(\rho_+))^2+4c^2\sin^2(\rho_-)\sin^2(\rho_+)}}2
  \end{equation}
  and $\arccos(\Sigma)\in[0,\pi]$. 
 \end{Theorem}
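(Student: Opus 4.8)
By Theorem~\ref{th:vrho}, which provides the implicit formula \eqref{eq:vrho}, it suffices to compute the two single-edge occupation probabilities $c_1(\rho)=\pi_\rho(e_1\in\eta)$ and $c_2(\rho)=\pi_\rho(e_2\in\eta)$ in the infinite-volume Gibbs measure $\pi_\rho$, and then take their difference. Since $\pi_\rho$ is determinantal, $\pi_\rho(e\in\eta)$ equals the Kasteleyn weight of $e=(b_e,w_e)$ times $\mathbb{K}^{-1}_\rho(w_e,b_e)$, the inverse Kasteleyn (coupling) matrix of $\pi_\rho$. The plan is to write $\mathbb{K}^{-1}_\rho(w,b)$ as a double contour integral over $\{|z|=r_1\}\times\{|u|=r_2\}$ of a cofactor of the Fourier transform $\hat K(z,u)$ of a Kasteleyn matrix for the $2$-periodic weights, divided by the characteristic polynomial $P(z,u)=\det\hat K(z,u)$ (times an explicit monomial in $z,u$ recording the relative position of $b$ and $w$), where the radii $(r_1,r_2)$ are fixed by requiring the associated slope to be $\rho$; this is the Kenyon--Okounkov--Sheffield correspondence between magnetic fields and slopes, and alternatively one can simply import the bulk correlation kernel of the two-periodic Aztec diamond from \cite{CJ16}, since the unique translation-invariant ergodic Gibbs measure of slope $\rho$ is the bulk limit of that model at a point of local slope $\rho$. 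For the $2$-periodic weighting $\hat K$ is an explicit $2\times2$ matrix and, because of the block structure of its entries, $P(z,u)$ has the separated form $\tilde p(z)-\tilde q(u)$ with $\tilde p,\tilde q$ three-term Laurent polynomials whose coefficients are elementary functions of $a$ (equivalently of $c=a/(1+a^2)$); identifying which cofactor entry corresponds to $e_1$ and which to $e_2$ is bookkeeping.

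The core of the argument is the evaluation of the resulting double integrals for $c_1(\rho)$ and $c_2(\rho)$. I would perform the $u$-integral first by residues: for $z$ on its contour the integrand has poles, besides possibly $u=0$, at the two roots $u_\pm(z)$ of $\tilde q(\,\cdot\,)=\tilde p(z)$, and for the radii associated with $\rho$ exactly one of them, say $u_+(z)$, lies inside $\{|u|=r_2\}$; collecting that residue reduces $c_i(\rho)$ to a single integral
\[
c_i(\rho)=\frac1{2\pi\I}\oint_{|z|=r_1}g_i(z)\,\frac{\D z}{z}+\bigl(\text{contribution of the residue at }z=0\text{, and of the }-1/4\text{ shifts}\bigr),
\]
where $g_i$ is rational in $z$ and in the branch of a square root $\sqrt{\tilde p(z)^2+\alpha\,\tilde p(z)+\beta}$ (with $\alpha,\beta$ explicit in $a$), namely the discriminant of $\tilde q(\,\cdot\,)=\tilde p(z)$. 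After the change of variables that replaces the contour data by the angular variables $\rho_-=\pi(\rho_1-\rho_2)$ and $\rho_+=\pi(\rho_1+\rho_2)$ (the magnetic fields conjugate to the two diagonal directions of the $2$-periodic lattice, along which the ``$a$''-faces form a sublattice), this square root becomes precisely the one in \eqref{eq:Sigma}, and $\Sigma$ itself emerges as the relevant root of the quadratic governing $u_\pm$ evaluated at the point picked out by the residue. The remaining single integral is $\int_0^{2\pi}\tfrac{\D\phi}{2\pi}$ of a function whose primitive is an arccosine --- concretely, it computes the normalized angular length of the arc on which a certain modulus inequality between $u_+(z)$ and $u_-(z)$ holds --- and this is the origin of the term $\tfrac{\pi-\arccos[\Sigma]}{2\pi}\,\sgn(\rho_-\rho_+)$ in \eqref{eq:v}, the elementary term $\rho_-/(2\pi)$ coming from the residue at $z=0$ (or $u=0$) together with the $-1/4$ shifts relating edge probabilities to the height normalization \eqref{eq:hxy}.

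I expect the main obstacle to be exactly the bookkeeping that makes this collapse work uniformly in $z$: pinning down the location of the two contours relative to the amoeba of $P$, deciding which of $u_\pm(z)$ is enclosed, and tracking the correct branch of the square root along the whole contour --- it is precisely the right branch/contour choice that makes the single integral reduce to a clean arccosine rather than an unwieldy elliptic-type integral (note that $\mathcal P$ has a single interior lattice point, so the spectral curve has genus one and a priori one might fear elliptic functions). A secondary difficulty is that $e_1$ and $e_2$ are edges incident to an ``$a$''-face in perpendicular directions and hence correspond to distinct cofactor entries; here I would first use the reflection symmetries of Remark~\ref{rem:vsimm} to restrict to the sector $\mathcal P\cap\{\rho_1\ge0,\,0\le\rho_2\le\rho_1\}$, which fixes all signs and branch cuts once and for all. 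Formula \eqref{eq:v} then follows by adding the two single-integral evaluations and simplifying with elementary trigonometric identities, and one finishes by checking the degenerate cases $\rho_+=0$ or $\rho_-=0$ (where $\sgn(\rho_-\rho_+)$ is ill-defined but $v$ is continuous, so the formula extends by continuity) and $a=1$ (where $\Sigma$ degenerates and one recovers the uniform-weight speed, consistent with \cite{KenLectures}).
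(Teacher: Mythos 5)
Your starting point coincides with the paper's: reduce to the edge probabilities $c_1(\rho)-c_2(\rho)$ of Theorem \ref{th:vrho}, write them as double contour integrals against the inverse Kasteleyn matrix with magnetic coordinates $(r_1,r_2)$ chosen to match the slope, and exploit the separated form of the characteristic polynomial in the rotated coordinates. But the heart of the theorem is precisely the part you leave as an assertion, and as stated it has a genuine gap. After performing one residue you obtain a single integral which is naturally a function of the \emph{magnetic coordinates} (equivalently of the branch points/torus zeros), not of $\rho$; to arrive at \eqref{eq:v}--\eqref{eq:Sigma} you must also compute the slope components $\rho_1,\rho_2$ by the same contour-integral machinery and then eliminate the magnetic coordinates. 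In the paper this is the content of Lemma \ref{lem:slopes} together with the trigonometric elimination of Lemma \ref{lem:speed2}: the arc integrals do \emph{not} compute a ``normalized angular length'' — they evaluate, because the integrands are exact differentials of $\log G(\omega)$ and $\log\omega$, to differences of arguments at the endpoints $\pm\omega_c,\pm\overline{\omega}_c$ — and the quantity $\Sigma$ arises as $\cos(2\arg\omega_c)$, a root of the quadratic \eqref{eq:eqoc} obtained by eliminating $\arg G(\omega_c)$ and $\arg G(\omega_c^{-1})$ (via the sine-rule identities for the triangles built from $\sqrt{2/c}\,\omega_c=G(\omega_c)-G(\omega_c)^{-1}$), \emph{not} as a root of the quadratic $\tilde q(u)=\tilde p(z)$ governing $u_\pm(z)$ ``at the point picked out by the residue''; that identification is incorrect. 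Likewise the linear term $\rho_-/(2\pi)$ does not come from a residue at the origin plus the $-1/4$ shifts (which cancel in $c_1-c_2$); it comes from trading $\arg G(\omega_c)$ in the speed formula for $\rho_1-\rho_2$ via the slope formulas.

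Two further points you would need to supply to make the plan a proof. First, the bookkeeping you defer to ``symmetry'' is a genuine case analysis: which of the two contours is inside the other depends on the signs of $\rho_\pm$ (the paper's $\delta_1,\delta_2$ of Lemma \ref{lem:allslopes}), and this is what produces the factor $\mathrm{sign}(\rho_-\rho_+)$ and the integer offsets; it cannot be fixed ``once and for all'' on a single sector without then verifying the symmetry relations \eqref{eq:simmv}--\eqref{eq:simmv2} for the candidate formula. Second, even after deriving the quadratic satisfied by $\Sigma$, one must select the correct root (the paper does this by imposing $v(-1/2,-1/2)=0$, forced by \eqref{eq:simmv1}); your sketch never addresses this, and choosing the wrong root gives a different, incorrect formula. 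So while your route is the same in spirit as the paper's (residue reduction of the Kasteleyn double integral, with the arccosine emerging from the liquid-phase zeros on the torus), the steps that actually produce \eqref{eq:v}---the explicit slope/magnetic-coordinate dictionary, the elimination leading to \eqref{eq:Sigma}, and the root and sign selection---are missing rather than merely compressed.
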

The speed of growth is differentiable, except if the weights are genuinely $2$-periodic (i.e. $a<1$) and the slope corresponds to that of the smooth phase:
\begin{Theorem}[Asymptotic behavior of the speed close to the smooth phase]
  \label{th:a}
  The function $v(\cdot)$ is $C^2$ on $\mathcal L$ (recall definition
  \eqref{eq:calL}).  For $a< 1 $ it is not differentiable at $\rho=0$
  and one has the following asymptotic expansion for $\rho\sim 0$. Assume
  $\rho_+>0,\rho_->0 $ 
  and let $r:=\rho_-/\rho_+\in (0,1)$ (the other cases can be obtained by symmetry, see Remark \ref{rem:vsimm}). Then, as $\rho_+\to0$ (under the assumption $a<1$)
  \begin{eqnarray}
    \label{eq:asympt}
	  v(\rho)=\frac{r \rho_+}2 - \frac{\sqrt{f_1(r)}}{2 \pi}  \left[\sqrt 2
\rho_++ \left(\frac{f_1(r)}{6 \sqrt2} + \frac{f_2(r)}{\sqrt2 f_1(r)} \right)\rho_+^3\right]+O(\rho_+^5)
  \end{eqnarray}
  where $f_1(r)>0$.

When $a<1$, one eigenvalue of $H_\rho$
      diverges  as $\rho\to0$, the other tends to zero and $\det(H_\rho)$ tends to a
      finite strictly negative limit.
\end{Theorem}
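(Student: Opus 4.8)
The plan is to start from the explicit asymptotic expansion \eqref{eq:asympt} and its counterparts in the other sectors, change variables from $(\rho_1,\rho_2)$ to $(\rho_+,\rho_-)=\pi(\rho_1+\rho_2,\rho_1-\rho_2)$, and compute the Hessian $H_\rho$ of $v$ with respect to the original slope $\rho$. Because $(\rho_1,\rho_2)\mapsto(\rho_+,\rho_-)$ is linear, the Hessian in the $\rho$-variables is $\pi^2 M^T \widetilde H M$ where $M=\begin{pmatrix}1&1\\1&-1\end{pmatrix}$ and $\widetilde H$ is the Hessian of $v$ seen as a function of $(\rho_+,\rho_-)$; in particular $\det H_\rho = \pi^4 (\det M)^2 \det\widetilde H = 4\pi^4 \det\widetilde H$, so it suffices to track $\det\widetilde H$ and the two eigenvalues of $\widetilde H$ (up to the fixed positive factor $4\pi^4$, and modulo the harmless fact that the eigenvalues themselves are not similarity invariants — see the last step). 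First I would fix the sector $\rho_+>0$, $\rho_->0$ and write $\rho_-=r\rho_+$ with $r\in(0,1)$, so that, from \eqref{eq:asympt}, along each ray $v$ behaves like $\tfrac{r}{2}\rho_+^2\cdot\tfrac1{\rho_+}\cdot\pi^{-1}\cdots$ — more precisely I would re-expand $v$ as a function of the two independent small variables $\rho_+$ and $\rho_-$ (not of $\rho_+$ and $r$), keeping terms through the relevant order.

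The key observation making the singularity transparent is that the leading term of $v$ near $0$ is \emph{homogeneous of degree $1$} in $(\rho_+,\rho_-)$: writing $\rho_-=r\rho_+$, the order-$\rho_+$ part of \eqref{eq:asympt} is $\rho_+\big(\tfrac r2-\tfrac{\sqrt2}{2\pi}\sqrt{f_1(r)}\big)=:\rho_+\,g(r)$, i.e. $v(\rho_+,\rho_-)= \rho_+\, g(\rho_-/\rho_+) + (\text{cubic and higher})$. A function of the form $\rho_+\,g(\rho_-/\rho_+)$ is positively homogeneous of degree $1$, hence its Hessian $\widetilde H^{(1)}$ is homogeneous of degree $-1$; an explicit computation gives $\widetilde H^{(1)} = \tfrac{1}{\rho_+}\,g''(r)\begin{pmatrix} r^2 & -r \\ -r & 1\end{pmatrix}$, a rank-one matrix with $\det \widetilde H^{(1)}=0$. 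So at leading order the Hessian is rank one with a single eigenvalue $\tfrac{1+r^2}{\rho_+}g''(r)$, which blows up like $1/\rho_+\to\infty$ (one must check $g''(r)<0$, equivalently that the relevant branch of $\sqrt{f_1(r)}$ makes $g$ strictly concave — this should follow from the explicit form of $f_1$ obtained in the proof of Theorem \ref{th:a}, and will determine the sign as $-\infty$). The second eigenvalue vanishes at this order; to see that it in fact tends to $0$ and to pin down $\det \widetilde H$, I would bring in the next term in the expansion. The cubic correction $-\tfrac{1}{2\pi}\sqrt{f_1(r)}\big(\tfrac{f_1(r)}{6\sqrt2}+\tfrac{f_2(r)}{\sqrt2 f_1(r)}\big)\rho_+^3$, when re-expressed in $(\rho_+,\rho_-)$ and differentiated twice, contributes a Hessian of size $O(\rho_+)$, hence an $O(\rho_+)$ correction to the eigenvalue along the near-kernel direction; together with the $1/\rho_+$ eigenvalue this yields $\det\widetilde H \sim (\text{const})\cdot g''(r)\cdot(\text{coefficient of the cubic term})$, a finite nonzero constant as $\rho_+\to0$, and its sign is negative precisely because $\det H_\rho<0$ is already known from Theorem \ref{th:AKPZ} throughout $\mathcal L$ and $\det$ is continuous.

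Concretely, the steps in order are: (1) record the change of variables $\rho\mapsto(\rho_+,\rho_-)$ and the identity $H_\rho=\pi^2 M^T\widetilde H M$, so that all statements reduce to statements about $\widetilde H$; (2) extract from \eqref{eq:asympt} the degree-$1$ homogeneous leading part $\rho_+ g(\rho_-/\rho_+)$ and the cubic remainder, as a genuine two-variable expansion; (3) compute $\widetilde H$ block by block: the leading contribution is the rank-one matrix $\tfrac{g''(r)}{\rho_+}\big(\begin{smallmatrix}r^2&-r\\-r&1\end{smallmatrix}\big)$, the subleading contribution is $O(\rho_+)$; (4) diagonalize the sum: one eigenvalue is $\tfrac{(1+r^2)g''(r)}{\rho_+}(1+o(1))\to\mp\infty$, the other is $O(\rho_+)\to0$, and $\det\widetilde H\to$ (finite nonzero limit), with the sign fixed negative by continuity of $\det H_\rho$ from $\mathcal L$; (5) transport back through $M$: since $M$ is invertible with $\det M=-2$, the product of the eigenvalues of $H_\rho$ equals $\det H_\rho = 4\pi^4\det\widetilde H$, hence tends to a finite strictly negative limit, and the two eigenvalues of $H_\rho$ themselves — being $\sim$ (positive constant)$\times$ those of $\widetilde H$ up to the bounded distortion induced by the non-orthogonality of $M$, which I would control by noting $\|M\|,\|M^{-1}\|$ are $O(1)$ — one diverges and one vanishes. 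I expect the main obstacle to be step (2)–(3): turning the ray-wise expansion \eqref{eq:asympt}, which is written in terms of $\rho_+$ and the ratio $r$, into a clean two-variable Taylor expansion that is uniform enough in $r$ on compact subsets of $(0,1)$ to justify differentiating twice and reading off the Hessian asymptotics — in particular checking that no hidden term of intermediate homogeneity degree (e.g. an $O(\rho_+^2)$ piece, forbidden by the symmetries of Remark \ref{rem:vsimm} but worth verifying) spoils the picture, and carefully identifying the sign of $g''(r)$ from the explicit $f_1$.
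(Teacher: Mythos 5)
Your proposal has a genuine gap, and it is concentrated in exactly the two places where the paper does real work. First, you take the expansion \eqref{eq:asympt} as an input (``start from the explicit asymptotic expansion''), but that expansion --- together with the explicit formulas for $f_1,f_2$, the positivity $f_1(r)>0$, the $C^2$ statement on $\mathcal L$ and the non-differentiability at $\rho=0$ --- \emph{is} the bulk of the theorem; in the paper it is extracted from the closed formula \eqref{eq:v} of Theorem \ref{th:v}, yielding \eqref{eq:f1f2}, and nothing in your plan produces it. What you do prove, granted \eqref{eq:asympt}, is essentially the paper's own computation in a nicer language: the degree-one homogeneous part $\rho_+\,g(\rho_-/\rho_+)$ has a rank-one, $O(1/\rho_+)$ Hessian and the cubic term contributes $O(\rho_+)$, which reproduces the paper's asymptotics \eqref{eq:trH} and \eqref{eq:dethas} for the trace and determinant. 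That part of the route is fine and equivalent to the paper's.

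The second, more serious problem is how you fix the sign (and even the nonvanishing) of the limiting determinant. You invoke Theorem \ref{th:AKPZ} plus continuity, but this is circular: in the paper, the proof of Theorem \ref{th:AKPZ} for $a<1$ and $0<\rho_-,\rho_+<\pi$ explicitly \emph{uses} Theorem \ref{th:a} (``we know from Theorem \ref{th:a} that $\det(H_\rho)<0$ for $\rho$ small'') as the anchor that fixes the sign, after showing the determinant never vanishes. So Theorem \ref{th:AKPZ} cannot be fed back into the proof of Theorem \ref{th:a}. Moreover, even the finiteness-and-nonvanishing of the limit, and the divergence of one eigenvalue (equivalently $g''(r)\neq 0$, i.e.\ $f_1'(r)^2-2f_1(r)f_1''(r)\neq 0$), are left as ``should follow from the explicit form of $f_1$''; in the paper these are precisely the direct checks that $(f_1(r)^2+6f_2(r))>0$ and $2f_1(r)f_1''(r)-f_1'(r)^2<0$ whenever $c<1/2$, carried out on the explicit expressions \eqref{eq:f1f2}. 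Without those checks your argument shows only that one eigenvalue is $O(1/\rho_+)$ and the other $O(\rho_+)$, not that the first actually diverges nor that $\det(H_\rho)$ has a strictly negative (rather than zero) limit. To repair the proof you must (i) actually derive \eqref{eq:asympt} with the explicit $f_1,f_2$ from \eqref{eq:v}, and (ii) verify the two sign conditions on $f_1,f_2$ directly, replacing the appeal to Theorem \ref{th:AKPZ}.
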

The explicit expressions of $f_1,f_2$ can be found in Section \ref{sec:propv}. From~\eqref{eq:asympt}, one sees that 
$$ 
\partial_{\rho_+} v(\rho) = -\frac{2}{4 \pi \sqrt{f_1(r)}} \left( 2 f_1(r) -r f_1'(r) \right)+O(\rho_+)
$$
as $\rho_+ \to 0$, which is non-trivial and nowhere vanishing for $c<1/2$. Hence $\partial_{\rho_+} v(\rho)$ depends on $r$ as $\rho \to 0$ and so $\nabla v(\rho)$ is not continuous at $\rho=0$.

As discussed in the introduction, of particular relevance for the
large-scale fluctuation properties of the growth model is the sign of
the determinant of the Hessian matrix $H_\rho$, a negative  or a vanishing determinant
indicating that the model belongs to the AKPZ universality class. 
 For $a<1$, Theorem \ref{th:a} gives that the sign is negative in a neighborhood of $\rho=0$.
  For $a=1$, the formulas for the derivatives of $v(\rho)$ are relatively simple and an explicit computation of the
  determinant shows that
  \begin{eqnarray}
    \label{eq:dethessa1}
\det(H_\rho)<0    
  \end{eqnarray}
 for  every $\rho\in \mathcal L$  (cf. Eq. \eqref{eq:detha1} below).  
 For $a<1$, on the other hand, we see no direct way of proving
 \eqref{eq:dethessa1}, starting from the explicit expression
 \eqref{eq:v} (we did try to simplify the resulting expressions using Mathematica).

 To circumvent this problem, we first give an equivalent,
 complex-analytic characterization of the speed of growth.
Assume that $0<\rho_-,\rho_+<\pi$ and define
\begin{eqnarray}
  \label{eq:XY}
X=\frac{\rho_-}2+\frac\pi2\in(\pi/2,\pi), \qquad   Y=-\frac{\rho_+}2+\frac32\pi\in(\pi,3\pi/2).
\end{eqnarray}

\begin{Theorem}
  \label{th:harmonic}
  For $z\in\mathbb C$, let $G(z)=z-\sqrt{z^2+2c}$ with the branch cut as specified in~\eqref{eq:sqrt}. The mapping from
  $\mathcal Q^+:=\{z\in\mathbb C:\Re(z)>0,\Im(z)>0\}$ to
  $(\pi/2,\pi)\times(\pi,3\pi/2)$ defined by
  $z\in\mathcal Q^+\mapsto (X(z),Y(z))=(\arg(G(z)),\arg G(1/z))$ is a
  diffeomorphism. Here, we adopt the convention that the argument $\arg(\cdot)$ ranges from
  $-\pi/2$ to $+3\pi/2$.

For $0<\rho_-,\rho_+<\pi$ the function $v(\cdot)$ of Theorem \ref{th:v} is equivalently given as follows:
\begin{eqnarray}
  \label{eq:v2}
  v(\rho)=\frac X\pi-1+\frac1\pi\arg z(X,Y),
\end{eqnarray}
with $X=X(\rho),Y=Y(\rho)$ as in \eqref{eq:XY}. 
The other three cases ($-\pi<\rho_-<0<\rho_+<\pi$, $-\pi<\rho_+<0<\rho_-<\pi$ and $-\pi<\rho_-,\rho_+<0$) can be obtained by symmetry.
\end{Theorem}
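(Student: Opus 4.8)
The plan is to start from the explicit formula \eqref{eq:v} for $v(\rho)$ and rewrite the two pieces appearing there in terms of the arguments of the single-variable function $G(z)=z-\sqrt{z^2+2c}$. First I would establish the diffeomorphism claim. Since $G(z)=z-\sqrt{z^2+2c}$ and $z\mapsto z^2+2c$ maps $\mathcal Q^+$ into the slit plane (avoiding the negative real axis, where the branch cut of the square root sits), $G$ is holomorphic and injective on $\mathcal Q^+$; one checks $G$ maps $\mathcal Q^+$ biholomorphically onto a region in the lower half plane, so that $\arg G(z)$ is well-defined with values in $(\pi/2,\pi)$ once the argument convention ($-\pi/2$ to $3\pi/2$) is fixed. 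A short computation at the boundary — $z$ real positive gives $G(z)$ real negative hence $\arg=\pi$ contributing the edge $X=\pi$, while $z$ purely imaginary gives $\arg G(z)=\pi/2$ — pins down that $X(z)=\arg G(z)$ sweeps $(\pi/2,\pi)$, and likewise $Y(z)=\arg G(1/z)$ sweeps $(\pi,3\pi/2)$ (the inversion $z\mapsto 1/z$ preserves $\mathcal Q^+$ and swaps the two boundary rays, which is why the two ranges differ). That $(X,Y)$ is a diffeomorphism onto the open rectangle follows because the map is smooth, the Jacobian is non-degenerate (compute $\partial_z G$ and use that $G$, $\tilde G(z):=G(1/z)$ are not locally proportional), and it is a proper bijection on the boundary.

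Next I would connect this to $\Sigma$ and the formula \eqref{eq:v}. The key algebraic identity to verify is that, with $X=\rho_-/2+\pi/2$ and $Y=-\rho_+/2+3\pi/2$ as in \eqref{eq:XY}, the point $z=z(X,Y)\in\mathcal Q^+$ recovering these two arguments satisfies a relation whose real part encodes precisely $\Sigma$ from \eqref{eq:Sigma}. Concretely: from $X(z)=\arg G(z)$ and the explicit form of $G$, one can solve for $\Re(z)$ and $\Im(z)$ in terms of $X$; doing the same with $Y(z)=\arg G(1/z)$ gives a second pair of relations; eliminating $z$ between them should reproduce the square-root expression under the $\arccos$ in \eqref{eq:Sigma}, because both $\Sigma$ and $G$ are built from the same quadratic $w^2+2c$ (note $c=a/(1+a^2)$ and $2c=2a/(1+a^2)$ are exactly the constants appearing in the spectral curve of the $2$-periodic dimer model). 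Having identified $\arccos[\Sigma]$ with $\pi - \arg z(X,Y)$ (up to the sign factor $\operatorname{sign}(\rho_-\rho_+)$, which in the regime $0<\rho_-,\rho_+<\pi$ is $+1$), and observing that $\rho_-/2 = X - \pi/2$ so that $\rho_-/(2\pi) = X/\pi - 1/2$, the identity \eqref{eq:v} rearranges directly into \eqref{eq:v2}: $v(\rho)=\rho_-/(2\pi) - (\pi-\arccos\Sigma)/(2\pi) = (X/\pi - 1/2) - (1/2 - \arg z(X,Y)/\pi) = X/\pi - 1 + \arg z(X,Y)/\pi$.

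The main obstacle I expect is the algebraic elimination step: showing cleanly that inverting the pair $(\arg G(z),\arg G(1/z))$ and extracting $\arg z$ reproduces exactly $\pi-\arccos[\Sigma]$ with $\Sigma$ as in \eqref{eq:Sigma}. The square root in \eqref{eq:Sigma}, $\sqrt{(\cos\rho_--\cos\rho_+)^2+4c^2\sin^2\rho_-\sin^2\rho_+}$, is not obviously the modulus or real part of any simple expression in $z$, so one has to massage it — likely by writing $z$ in the form that diagonalizes the quadratic $z^2+2c$, using the half-angle substitutions $\rho_\pm\leftrightarrow X,Y$ to turn trigonometric polynomials into rational functions of $e^{iX},e^{iY}$, and then recognizing the discriminant. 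Care with branch cuts and with the non-standard argument convention (range $-\pi/2$ to $3\pi/2$) is needed throughout, since a wrong sheet choice shifts $v$ by an integer and silently breaks the identity. Once the three reflection symmetries \eqref{eq:simmv}--\eqref{eq:simmv2} from Remark \ref{rem:vsimm} are invoked, the remaining three sign sectors follow with no extra work, because $G$ and the inversion $z\mapsto 1/z$ intertwine the relevant reflections of $\mathcal Q^+$.
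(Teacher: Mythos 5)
There are two genuine gaps, one in each half of your argument. For the diffeomorphism claim, the crux is the non-degeneracy of the Jacobian of $z\mapsto(\arg G(z),\arg G(1/z))$, and you dispose of it with ``compute $\partial_z G$ and use that $G$, $\tilde G(z):=G(1/z)$ are not locally proportional''. That is not a proof, and it is not even the right condition: injectivity of $G$ (which does hold, since $z=\tfrac12(G(z)-2c/G(z))$) tells you nothing about the pair of \emph{arguments}, because $X$ discards the modulus of $G(z)$. Writing $V(z)=zG'(z)/G(z)=-z/\sqrt{z^2+2c}$, the Jacobian degenerates exactly when $\Re(V(z))\Im(V(1/z))=\Re(V(1/z))\Im(V(z))$, i.e.\ when $V(z)/V(1/z)$ is real, cf.\ \eqref{eq:j2}; ruling this out on all of $\mathcal Q^+$ is the real work (the paper does it by showing $\arg\bigl(V(z)/V(1/z)\bigr)$ is harmonic on $\mathcal Q^+$ with boundary values in $[0,\pi/2]$, hence never $0$ or $\pi$ inside), and your sketch contains no substitute for it. You also need an explicit global step (properness plus Hadamard's global inverse theorem, or a covering-space argument) to pass from local to global diffeomorphism; ``proper bijection on the boundary'' gestures at this but is not carried out. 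A further sign of trouble: your claim that $G$ maps $\mathcal Q^+$ into the lower half plane is false (for $z>0$ real $G(z)<0$ and $G(z)\sim -c/z$ at infinity, so the image lies in the second quadrant, consistent with $\arg G\in(\pi/2,\pi)$), which contradicts your own next clause.

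For the identity \eqref{eq:v2}, you propose to start from \eqref{eq:v} and prove by elimination that $\arccos[\Sigma]=2\arg z(X,Y)$ — and you explicitly flag this elimination as ``the main obstacle'' without doing it. But that identity \emph{is} the whole content of the second half of the theorem in your route, so the proposal stops exactly where the proof should begin; moreover the square root in \eqref{eq:Sigma} enters \eqref{eq:v} only after the branch selection $y_-$ made in the proof of Theorem \ref{th:v}, so your reverse elimination would also have to re-justify that sign choice. The efficient path, which the paper takes, is to bypass \eqref{eq:v} altogether: for $0<\rho_-,\rho_+<\pi$ one has $\delta_1=0$, $\delta_2=1$ by \eqref{eq:deltasandrhos}, and then \eqref{eq:G+}--\eqref{eq:G-} say precisely that $\arg G(\omega_c)=X$ and $\arg G(\omega_c^{-1})=Y$ — so the diffeomorphism identifies $z(X,Y)=\omega_c$ — while \eqref{viro} gives $v=\tfrac{\rho_-}{2\pi}-\tfrac12+\tfrac1\pi\arg\omega_c$, which is \eqref{eq:v2} upon substituting $\rho_-=2X-\pi$. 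If you insist on your route, you would essentially be re-deriving the trigonometric relation \eqref{eq:eqoc} of Lemma \ref{lem:speed2} in reverse; that is possible but strictly harder than using the lemmas already available.
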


From the characterization of $v(\cdot)$ provided by Theorem \ref{th:harmonic}, we can prove that the growth model does indeed belong to the AKPZ universality class for $\rho\in\mathcal L$, coherently with the logarithmic upper bound on growth of fluctuations provided by Eq. \eqref{eq:flutt1}:
  \begin{Theorem}[AKPZ signature of the speed of growth]
\label{th:AKPZ}
    For every $a\le 1$ and $\rho\in\mathcal L$, one has
    \begin{eqnarray}
      \label{eq:dh}
      \det(H_\rho)< 0.
    \end{eqnarray}
  \end{Theorem}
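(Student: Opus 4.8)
The plan is to use the complex-analytic representation of Theorem~\ref{th:harmonic}, which realizes $v$ as the imaginary part of a holomorphic function in a uniformizing variable, together with the elementary fact that any function of two real variables built in this way has non-positive Hessian determinant.

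First I would reduce to the variables $(X,Y)$. The map $(\rho_1,\rho_2)\mapsto(X,Y)$ of \eqref{eq:XY} is affine and invertible (its linear part has determinant $-\pi^2/2$), so $\det H_\rho$ has the same sign as the determinant of the Hessian of $v$ computed in the coordinates $(X,Y)$; by the reflection symmetries of Remark~\ref{rem:vsimm} it suffices to treat the range $0<\rho_-,\rho_+<\pi$, i.e.\ $z\in\mathcal Q^+$. On $\mathcal Q^+$ I use $z$ itself as uniformizing variable: by Theorem~\ref{th:harmonic} the map $z\mapsto(X,Y)$ is a diffeomorphism, with $X=\Im\log G(z)$ and $Y=\Im\log G(1/z)$, and, since $\arg z=\Im\log z$, equation~\eqref{eq:v2} becomes $\pi(v+1)=\Im\log\big(zG(z)\big)$. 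Thus $X$, $Y$ and $\pi(v+1)$ are imaginary parts of functions holomorphic on $\mathcal Q^+$, namely $a_1:=\log G(z)$, $a_2:=\log G(1/z)$ and $a_3:=\log\big(zG(z)\big)$.

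Next I would establish a general mechanism, by the change-of-variables formula for Hessians: if $a_1,a_2,a_3$ are holomorphic on an open set, $w\mapsto(X,Y):=(\Im a_1(w),\Im a_2(w))$ is a local diffeomorphism and $V$ is defined by $V(X,Y)=\Im a_3$, then
\[
\det\mathrm{Hess}_{(X,Y)}V=-\frac{|\kappa|^2}{\big(\Im(a_1'\overline{a_2'})\big)^2},\qquad \kappa:=a_3''-V_X\,a_1''-V_Y\,a_2'',
\]
and moreover
\[
\kappa=-\tfrac i2\big(V_{XX}(a_1')^2+2V_{XY}a_1'a_2'+V_{YY}(a_2')^2\big).
\]
The proof uses only that each $\Im a_j$ is harmonic --- so its Hessian in $(\Re w,\Im w)$ has the form $\bigl(\begin{smallmatrix}p&q\\q&-p\end{smallmatrix}\bigr)$ --- and the chain-rule identity $a_3'=V_X a_1'+V_Y a_2'$, whose $w$-derivative gives the second display. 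Applied with $V=\pi(v+1)$ and the $a_j$ above, this yields at once $\det H_\rho\le0$ on $\mathcal Q^+$, with equality precisely where $\kappa=0$.

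It then remains to prove $\kappa\ne0$ on $\mathcal Q^+$. Put $\xi:=a_1'/a_2'$; from $a_1'=-(z^2+2c)^{-1/2}$ and $a_2'=\big(z^2\sqrt{1/z^2+2c}\,\big)^{-1}$ one gets, independently of the branch of the square roots,
\[
\xi^2=\frac{z^2(1+2cz^2)}{z^2+2c}.
\]
Using $\xi^2=2(\Re\xi)\,\xi-|\xi|^2$ one rewrites $\kappa=-\tfrac i2(a_2')^2(\alpha\xi+\beta)$ with $\alpha:=2(V_{XX}\Re\xi+V_{XY})$ and $\beta:=V_{YY}-V_{XX}|\xi|^2$ real; since $a_2'\ne0$, if $\xi\notin\R$ then $\kappa=0$ forces $\alpha=\beta=0$, and substituting back gives $\det\mathrm{Hess}_{(X,Y)}V=V_{XX}^2(\Im\xi)^2$, which together with $\det\mathrm{Hess}_{(X,Y)}V\le0$ and $\Im\xi\ne0$ forces $\mathrm{Hess}_{(X,Y)}v=0$ there. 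So one is reduced to checking (a) $\xi\notin\R$ on $\mathcal Q^+$, and (b) $\mathrm{Hess}_{(X,Y)}v$ vanishes nowhere on $\mathcal L$. For (a): since $z\in\mathcal Q^+$ puts $\zeta:=z^2$ in the open upper half-plane, it suffices that $W(\zeta):=\zeta(1+2c\zeta)/(\zeta+2c)$ be non-real there; but $W(\zeta)\in\R$ is equivalent to $\Im\zeta\cdot\big(1+|\zeta|^2+4c\,\Re\zeta\big)=0$, i.e.\ to $\zeta$ lying on the circle $(\Re\zeta+2c)^2+(\Im\zeta)^2=4c^2-1$, which for $c\le1/2$ is either empty or the single real point $-2c$, and in either case misses $\{\Im\zeta>0\}$; hence $\xi^2\notin\R$, so $\xi\notin\R$. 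Point (b) is a more technical matter: for $a=1$ it is contained in the direct computation giving \eqref{eq:dethessa1}, and for $a<1$ it can be verified separately (using the real-analyticity of $v$ on $\mathcal L$ and the blow-up of one eigenvalue of $H_\rho$ near $\rho=0$ from Theorem~\ref{th:a}, together with a short argument excluding an interior zero locus). Combining (a) and (b) gives $\kappa\ne0$, hence $\det H_\rho<0$ on $\mathcal L$.

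The main obstacle is this last part --- the non-vanishing of $\kappa$ --- and within it the reduction to $\xi\notin\R$ together with the identification of the ``realness'' locus $\{1+|\zeta|^2+4c\,\Re\zeta=0\}$; once Theorem~\ref{th:harmonic} is in hand, the inequality $\det H_\rho\le0$ itself is essentially formal.
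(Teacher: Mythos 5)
Your ``soft'' part is correct and is genuinely different from the paper's route: the identity $\det\mathrm{Hess}_{(X,Y)}V=-|\kappa|^2/\bigl(\Im(a_1'\overline{a_2'})\bigr)^2$ (which I checked: it follows from harmonicity of $\Im a_1,\Im a_2,\Im a_3$ via the constraint $V_{XX}|a_1'|^2+2V_{XY}\Re(a_1'\overline{a_2'})+V_{YY}|a_2'|^2=0$), the computation $\xi^2=z^2(1+2cz^2)/(z^2+2c)$, and the circle argument showing $\xi\notin\mathbb R$ on $\mathcal Q^+$ are all fine, and they give $\det H_\rho\le 0$ ``for free''. The problem is the strict inequality, which is the actual content of the theorem. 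By your own chain of equivalences (given $\xi\notin\mathbb R$): $\det H_\rho=0$ at a point $\iff\kappa=0$ there $\iff \mathrm{Hess}_{(X,Y)}v=0$ there. So your point (b), ``$\mathrm{Hess}_{(X,Y)}v$ vanishes nowhere on $\mathcal L$'', is not a separable technicality --- it \emph{is} the statement $\det H_\rho\neq 0$, i.e.\ the theorem itself once $\le 0$ is known. The tools you invoke do not deliver it: real-analyticity of $v$ on $\mathcal L$ is compatible with $\det H_\rho$ vanishing on a lower-dimensional analytic set, and the blow-up of one eigenvalue as $\rho\to 0$ (Theorem \ref{th:a}) only controls a neighborhood of $\rho=0$; neither excludes a zero of $\det H_\rho$ (equivalently of $\kappa$) at some interior point with $\rho$ away from $0$. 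No ``short argument excluding an interior zero locus'' is actually given, and I do not see one that avoids the hard input.

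The paper's proof supplies exactly this missing non-degeneracy by a second boundary-value/harmonicity argument: the numerator of $\det\mathrm{Hess}_{(X,Y)}\arg z$ factors into two terms (Eq.\ \eqref{eq:combinaz1}--\eqref{eq:combinaz2}), one nonzero because $V(z)/V(1/z)$ with $V(z)=zG'(z)/G(z)=-z/\sqrt{z^2+2c}$ never takes the forbidden real values (already established in the proof of Theorem \ref{th:harmonic}), and the other nonzero because $\arg\bigl(U(z)/U(1/z)\bigr)$, with $U(z)=-2cz/(z^2+2c)^{3/2}$, is harmonic on $\mathcal Q^+$ with boundary values in $[-\pi/2,0]$ and not identically zero, hence never $0$ there; strict negativity then follows from continuity together with the reference computations (Theorem \ref{th:a} near $\rho=0$, and the explicit formulas for $a=1$ and $\rho_-=0$). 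To complete your argument you would need an analogous explicit argument showing $\kappa\neq 0$ on $\mathcal Q^+$ (e.g.\ a boundary/harmonicity analysis of the relevant ratio), which is where all the work lies. A secondary omission: your reduction ``by symmetry to $0<\rho_-,\rho_+<\pi$'' does not cover the locus $|\rho_1|=|\rho_2|$, $\rho\neq 0$ (and $\rho=0$ when $a=1$), which lies in $\mathcal L$ but corresponds to the boundary of $\mathcal Q^+$; the paper treats it separately via the explicit formula for $\det H_\rho$ at $\rho_-=0$ and the $a=1$ formula \eqref{eq:detha1}.
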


\section{
Stationarity of Gibbs measures}
 \label{sec:firstproofs}
In this section, we prove Proposition \ref{prop:general} as well as 
 Theorems \ref{th:Mchain} and \ref{th:invinf}.

\begin{proof}[Proof of Proposition \ref{prop:general}]
  We consider the case $k$ even, as the odd case follows from the same
  argument but interchanging even and odd below. Notice that the whole dimer configuration $\eta$ is
  determined by dimers covering edges on the boundary of the even
  faces and that there is a height change crossing an even face
  horizontally (resp. vertically) if and only if there is exactly one
  vertical (resp. horizontal) dimer covering an edge on the boundary
  of that even face. It then follows immediately that the only step in
  the definition of ${F}_k$ which has an effect on the height function
  is the sliding step: since after sliding the single dimer on the
  boundary of the even face moves to an edge of opposite parity, the
  height change of $F_k(\eta)$ is the opposite as that of $\eta$.

  Take the torus $\mathbb T_L$ with weights $\mathtt{w}_k$ and apply
  the spider move to all the even faces, followed by edge contraction
  of all the resulting two-valent vertices; see Fig.~\ref{fig:squaremovedynamics} for a schematic of the transformation for the underlying graph. 
\begin{figure}
\begin{center}
\includegraphics[height=6cm]{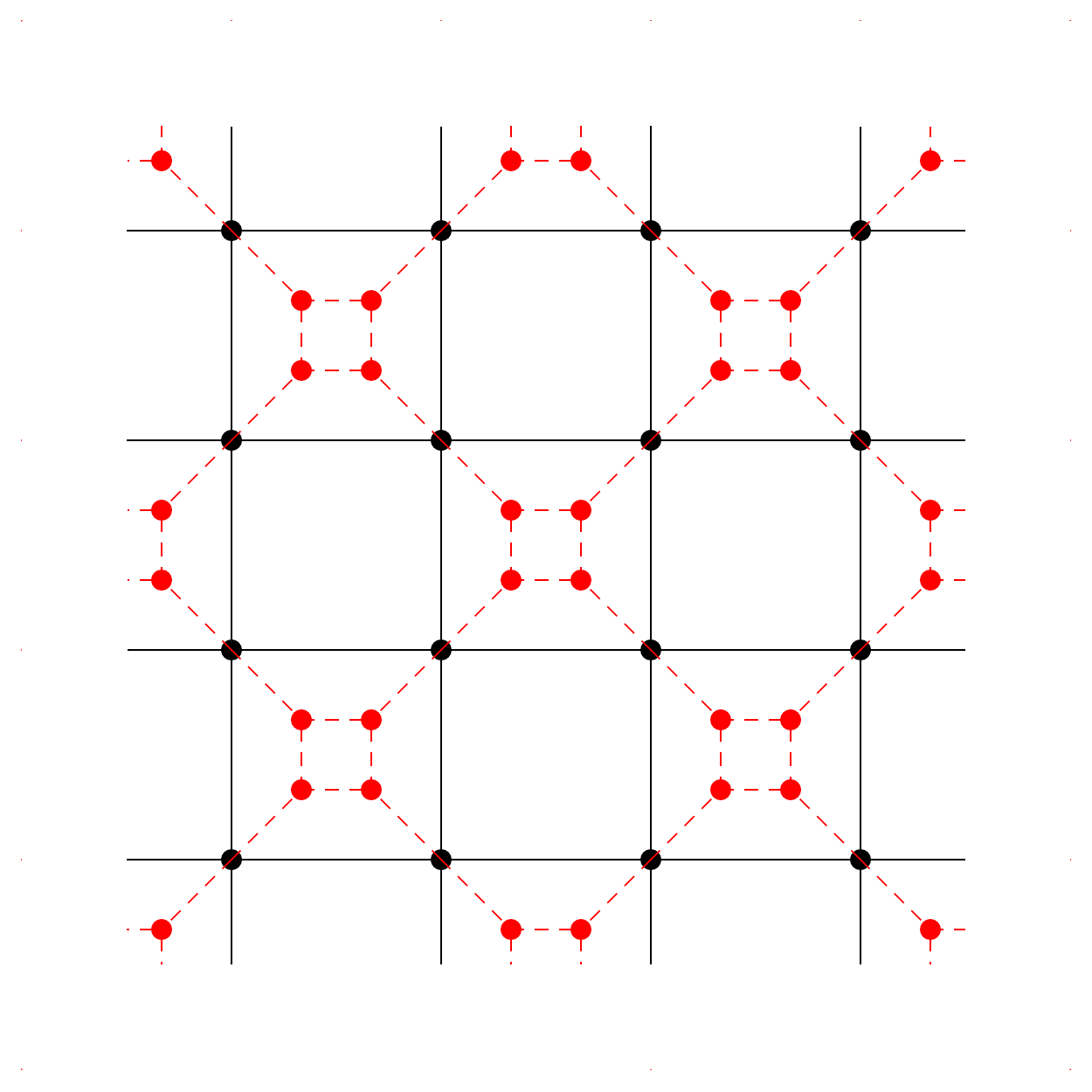}
\includegraphics[height=6cm]{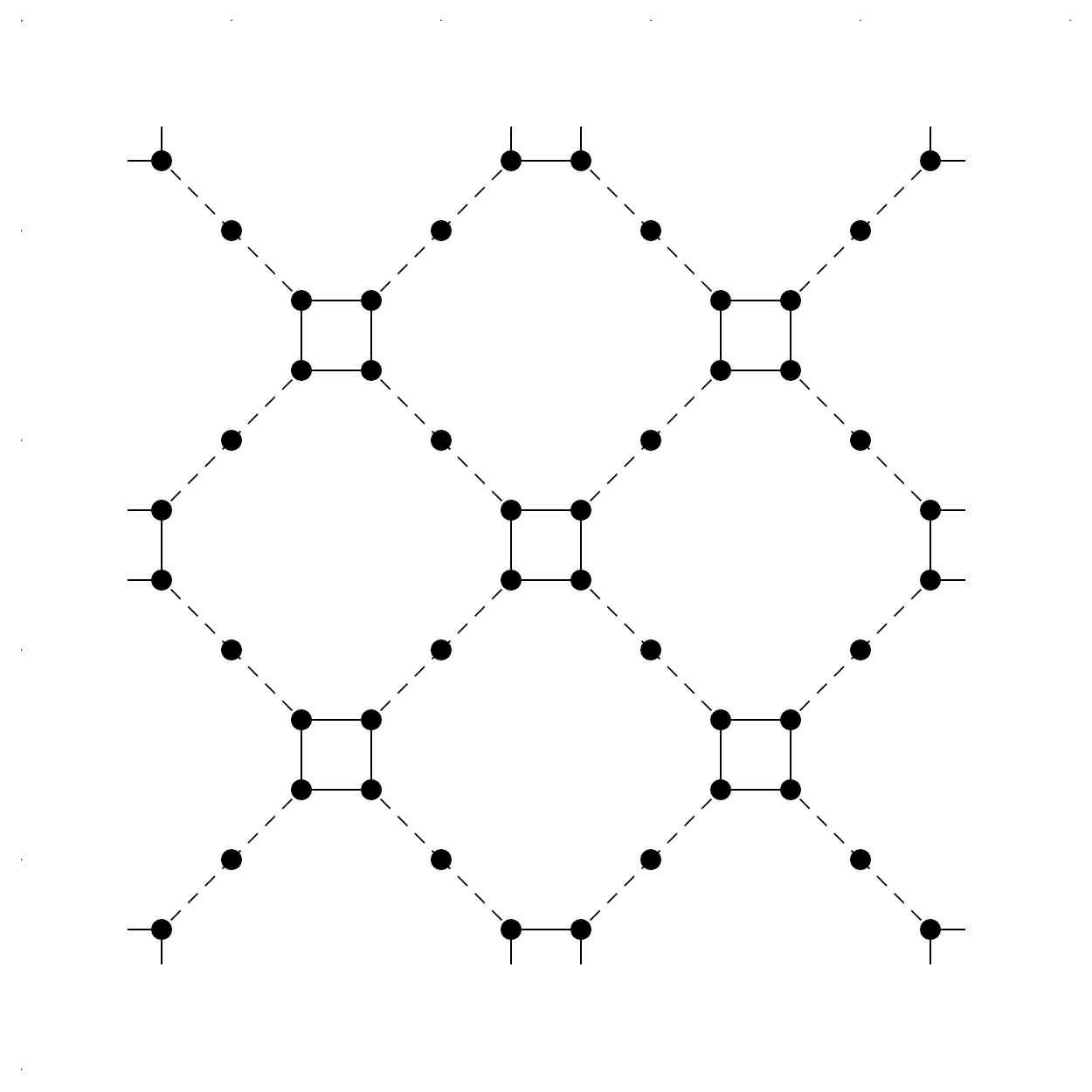}
\caption{The left figure applies the square move, drawn in dashed red, to the even faces of $\mathbb{T}_2$. All original edges (but not the vertices) of the graph are removed from by this procedure and the right figure shows the end result. The dashed edges in the right figure are those that are incident to two-valent vertices and so will be contracted under the edge contraction procedure. In both figures, the half edges represent edges which wrap around the torus.  
  }
\label{fig:squaremovedynamics}
\end{center}
\end{figure}
By construction, one
  obtains the graph $\mathbb T_L$ with weights $\mathtt{w}_{k+1}$.
	Explicitly, the weights around the even face with coordinates $(i,j)$ are given by  
	\begin{equation}\label{newweights}
\left( \frac{w_{i,j;k}^c}{\Delta_{i,j;k}} ,  \frac{w_{i,j;k}^d}{\Delta_{i,j;k}}, \frac{w_{i,j;k}^a}{\Delta_{i,j;k}}, \frac{w_{i,j;k}^b}{\Delta_{i,j;k}} \right),
	\end{equation}
	using the same labeling conventions as above.

	To see that ${F}_k(\eta)\sim {\pi}_{-\Delta;k+1}^{(L)}$, we proceed as
follows. Call $P_k$  the transition matrix from $\Omega_L(\Delta)$ to
$\Omega_L(-\Delta)$ of the random map $F_k$ and let $w^k_e$ be the
weight of the edge $e$ as determined by $\mathtt{w}_k$. If we prove that,
for every $\eta'\in\Omega_{L}(-\Delta)$,
\begin{eqnarray}
  \label{eq:ifwe}
  \sum_{\eta\in\Omega_L(\Delta)}\left[\prod_{e\in\eta}w^k_e\right] P_k(\eta,\eta')=N \prod_{e\in\eta'}w^{k+1}_e
\end{eqnarray}
for some constant $N$ independent of $\eta'$, then the claim of the Proposition follows and actually $N$ equals the ratio of partition functions of $\pi_{\Delta;k}$ and  $\pi_{-\Delta;k+1}$.

Call $A(\eta')$ the set of even faces such that $\eta'$ has no dimer
along the boundary, $D(\eta')$ the set of even faces such that two
parallel boundary edges are covered by dimers of $\eta'$ and
$R(\eta')$ the set of even faces that are neither in $A(\eta')$ nor in
$D(\eta')$.  Note first of all that the set $S(\eta'):=\{\eta$ such
that $P_k(\eta,\eta')\ne0\}$ is the set of configurations such that:
(a) for every face in $A(\eta')$, $\eta$ has two parallel dimers
(either vertical or horizontal) along the boundary of the face; (b)
for every face in $D(\eta')$, $\eta$ has no dimer along the face and
(c) for every even face in $R(\eta')$, $\eta$ has a single dimer along
its boundary, on the edge that is opposed to the one on which $\eta'$
has a dimer.  Secondly, observe that by the definition of the creation
step, for all $\eta\in S(\eta')$, $P_k(\eta,\eta')$ equals the product
over $(i,j)\in D(\eta')$ of
\[
  \frac{w^{r_{i,j}(\eta')}_{i,j;k}w^{r'_{i,j}(\eta')}_{i,j;k}}{\Delta_{i,j;k}}=\Delta_{i,j;k}\frac{w^{r_{i,j}(\eta')}_{i,j;k}}{\Delta_{i,j;k}}\frac{w^{r'_{i,j}(\eta')}_{i,j;k}}{\Delta_{i,j;k}}
\]
where $(r_{i,j}(\eta'),r'_{i,j}(\eta'))$ equals $(a,c)$ or $(b,d)$ according to whether $\eta'$ has two parallel horizontal or two parallel vertical dimers at even face $(i,j)$.
Next, note that for  all $\eta\in S(\eta')$, one has
\begin{eqnarray}
  \prod_{e\in\eta}w^k_e=\prod_{(i,j)\in A(\eta')}\left(w^{r_{i,j}(\eta)}_{i,j;k}w^{r'_{i,j}(\eta)}_{i,j;k}\right)\prod_{(i,j)\in R(\eta')}\left(\frac{w_{i,j;k}^{p_{i,j}(\eta)}}{\Delta_{i,j;k}}\Delta_{i,j;k}
  \right)
\end{eqnarray}
where $(r_{i,j}(\eta),r'_{i,j}(\eta))$ are as above, while
$p_{i,j}(\eta)$ is $a,b,c,d$ according to the position of the unique edge of $\eta$ along the boundary of the even face $(i,j)$. 
Multiplying by $P_k(\eta,\eta')$ and then summing over $\eta\in S(\eta')$ (i.e. over the two possible orientations of dimers of $\eta$ at faces in $A(\eta')$), we see that the l.h.s. of \eqref{eq:ifwe} equals
\begin{eqnarray}
  \label{eq:ifwe2}
  \prod_{e\in\eta'}w^{k+1}_e\,\prod \Delta_{i,j;k}
\end{eqnarray}
where the product in the r.h.s. runs over all even faces of $\mathbb T_L$. We used the fact that, after moving a dimer in the sliding step and changing weights from $\mathtt{w}_k$ to $\mathtt{w}_{k+1}$, the weight assigned to the dimer is  divided by $\Delta_{i,j;k}$.
In conclusion, \eqref{eq:ifwe} is proven with $N=\prod \Delta_{i,j;k}$.
\end{proof}

\begin{proof}[Proof of Theorem \ref{th:Mchain}] 
	For the two-periodic weighting,  $\mathtt {w}_0$ is determined by 
$$
	(w^a_{i,j;0},w^b_{i,j;0},w^c_{i,j;0},w^d_{i,j;0}) = \left\{ \begin{array}{ll}
		a(1,1,1,1)  & \mbox{if } (i,j)\!\!\!\mod 2=(0,0)\\
		(1,1,1,1) & \mbox{if } (i,j)\!\!\!\mod 2 =(1,1).
	\end{array} \right.
$$
It follows from Proposition \ref{prop:general} that
$[F_1\circ F_0](\eta)\sim \pi^{(L)}_{\Delta,2}$.  We have just to
prove that if we translate by $-{\bf e_1}-{\bf e_2}$ the weights
$\mathtt{w}_2$ we obtain the original two-periodic weights
$\mathtt {w}_0$, up possibly to an overall positive prefactor that
multiplies all weights and is inessential in the definition of the measure.

A simple computation based on \eqref{eq:wktowk+1} gives that
	\[
		\begin{split}
(w^a_{i,j;1},w^b_{i,j;1},w^c_{i,j,1},w^d_{i,j;1})
			&= \left\{ \begin{array}{ll}
	\frac{1}{2a}(1,1,1,1)  & \mbox{if } (i,j)\!\!\!\mod 2=(0,0) \\
				\frac{1}{2}(1,1,1,1) & \mbox{if } (i,j)\!\!\!\mod 2 =(1,1). 
	\end{array} \right. \\
			&=	
	\left\{ \begin{array}{ll}
	\frac{1}{2}(1,1/a,1,1/a)  & \mbox{if } (i,j)\!\!\!\mod 2=(1,0)\\
		\frac{1}{2}(1/a,1,1/a,1) & \mbox{if } (i,j)\!\!\!\mod 2 =(0,1).
	\end{array} \right.
		\end{split}
\]

and
$$
(w^a_{i,j;2},w^b_{i,j;2},w^c_{i,j;2},w^d_{i,j;2})
= 2c\,\left\{ \begin{array}{ll}
	(1,1,1,1)  & \mbox{if } (i,j)\!\!\!\mod 2=(0,0) \\
		a(1,1,1,1) & \mbox{if } (i,j)\!\!\!\mod 2 =(1,1)
\end{array} \right.
	$$
        with $c=a/(1+a^2)$ as usual.  Translating by
        $-{\bf e_1}-{\bf e_2}$ the weights $\mathtt{w}_2$, one is back to the original weights
        $\mathtt {w}_0$, up to the global prefactor $2c$, as wished.
\begin{Remark}
\label{rem:ag}
  Note also that, modulo an overall multiplicative constant, the
  weights $\mathtt{w}_1$ correspond to interchanging the positions of
  ``$a$'' and ``$1$'' faces in the original $2$-periodic weighting
  $\mathtt{w}_0$.
\end{Remark}

As far as ergodicity is concerned, assume that there are $n>1$
ergodicity classes within $\Omega_L(\Delta)$, i.e. subsets
$\mathcal C_1,\dots \mathcal C_n$ of $\Omega_L(\Delta)$ that are
invariant for the dynamics.  On the other hand, it is known that any
two configurations in $\Omega_L(\Delta)$ can be connected by a chain
of elementary rotations (i.e. the flip of two parallel adjacent dimers
from vertical to horizontal or vice-versa) . It follows that there
exist two configurations, $\eta\in \mathcal C_i,\eta'\in\mathcal C_j$
with $i\ne j$ that differ by a single elementary rotation at a face
$f$. Assume first that $f$ is an even face at single faces. Then,
apply the transformation $T$ to both $\eta$ and $\eta'$. Since in the
``delete step'' of $F_0$ the discrepancy at the face $f$ disappears,
the two updates can be coupled so that $T(\eta)=T(\eta')$. This
contradicts the assumption that $\eta,\eta'$ belong to two different
ergodicity classes.  Assume that $f$ is an odd face instead and call
$\Omega_\eta\subset \mathcal C_i,\Omega_{\eta'}\subset \mathcal C_j$
the subset of configurations from which one can reach $\eta,\eta'$ via
a single application of $T$. If $i\ne j$, then
$\Omega_\eta\cap\Omega_{\eta'}=\emptyset$.  Let
$\sigma\in \Omega_\eta$ and
$\eta=T(\sigma)=\tau_{(-1,-1)}[F_1\circ F_0](\sigma)$; note that the
two parallel dimers that $\eta$ has around the face $f$ have been
added in the ``addition'' step of $F_1$. On the other hand, the two
parallel dimers at $f$ could have been given (in the same addition
step) the opposite orientation, with positive probability. In this
case, the resulting configuration would be $\eta'$ instead, so that
$\sigma\in\Omega_{\eta'}$. This contradicts the assumption that
$\Omega_\eta\cap\Omega_{\eta'}=\emptyset$ and also contradicts that
there is more than one ergodicity class within $\Omega_L(\Delta)$.
\end{proof}

\begin{proof}[Proof of Theorem \ref{th:invinf}] We give only a sketchy
  proof since the argument is standard.  Call $U_f$ the support of
  $f$. The configuration $T(\eta)$ restricted to $U_f$ depends only on
  the restriction of $\eta$ to $\hat U_f$, the set of edges within
  distance $d$ from $U_f$, where $d$ is an integer independent of $f$.
  Then, since the measure $\pi^{(L)}_{\lfloor L \rho\rfloor}$
  converges locally to $\pi_\rho$ \cite{KOS03}, we can couple two random
  configurations sampled from the two measures in such a way that with
  high probability (as $L\to\infty$) they coincide in $\hat U_f$
  . Then, \eqref{eq:invarianza} follows immediately from the
  stationarity of $\pi^{(L)}_{\Delta}$ for the finite-volume dynamics.
\end{proof}

\begin{Remark} In \cite{toninelli20172+}, one of us proved that the Gibbs
  measures $\pi_\rho$ of the dimer model on the infinite hexagonal
  graph are stationary for an irreversible Markov chain where
  particles can perform unbounded jumps. In that case, deducing
  stationarity for the infinite system from stationarity on the torus
  required non-trivial arguments since the dynamics are non local
  (i.e. in that case it is not true that the configuration at time $1$
  in a domain $U$ depends only on the initial condition in a
  configuration-independent neighborhood of $U$).
\end{Remark}

\section{Proof of Proposition \ref{prop:dup} and Theorem \ref{th:vrho}}
\label{sec:speedfluctuationproofs}

\begin{proof}[Proof of Proposition \ref{prop:dup}]
  We will prove only \eqref{eq:dh1} as the proof of \eqref{eq:dh2} is  
 essentially identical.  It is enough to prove \eqref{eq:dh1} for
  faces $f_1,f_2$ that are separated by a single odd face.  Suppose
  for instance that $f_2$ is to the right of $f_1$.  For lightness
  of notation $\eta':=\tau_{(-1,0)}[F_0(\eta)]$. Then, with the notations of Fig. \ref{fig:dyn1}, the l.h.s. of
  \eqref{eq:dh1} equals
    \[
1_{e_2\in \eta'}-1_{e_1\in\eta'}=1_{e_2\in\eta',e_1\not\in\eta'}-1_{e_1\in\eta',e_2\not\in\eta'}.
    \]
\begin{figure}
	\begin{center}
		\includegraphics[height=4cm]{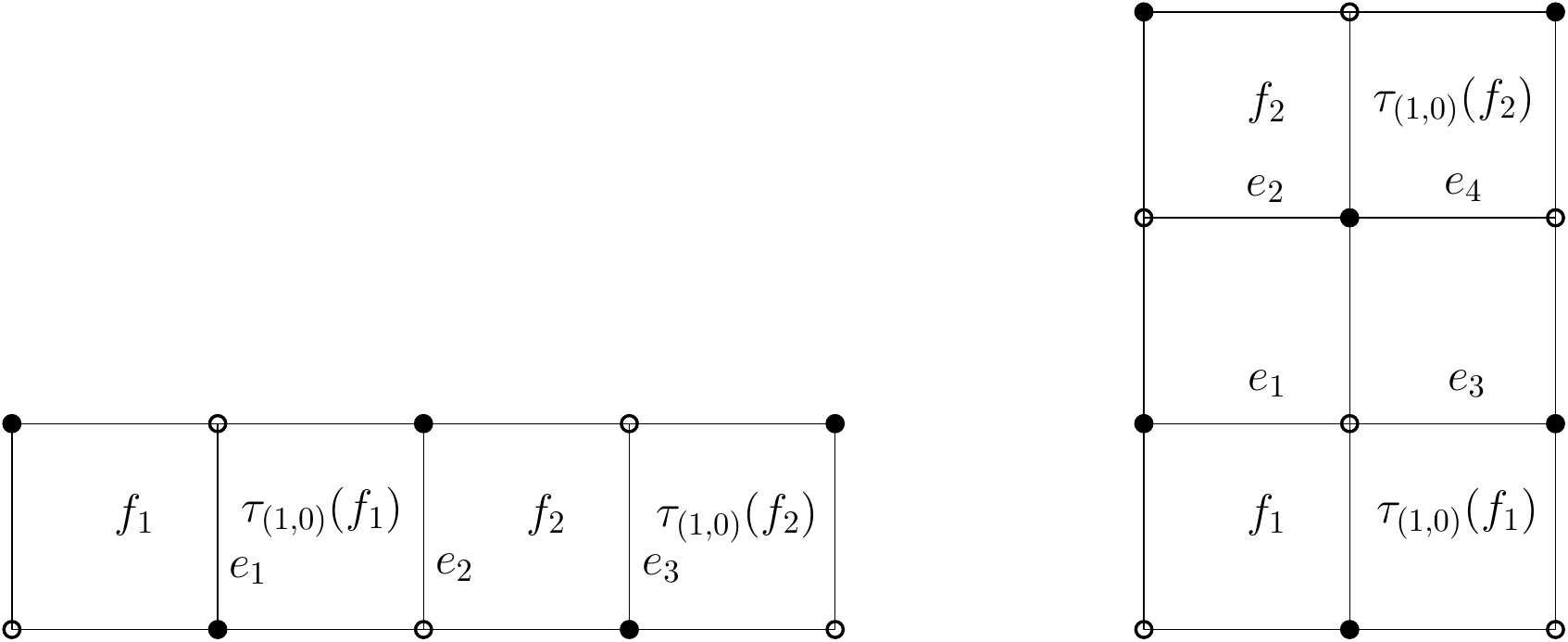}
		\caption{The even faces $f_1,f_2$ together with their
                  translates $\tau_{(1,0)}(f_1),\tau_{(1,0)}(f_2)$
                  when $f_2$ is either to the right or above $f_1$, and
                  the edges $e_1,\dots,e_4$ of the proof.}
		\label{fig:dyn1}
	\end{center}
\end{figure}
From the definition of the ``sliding step'' of $F_0$, one easily checks that the event
$\{e_2\in\eta',e_1\not\in\eta'\}$ is equivalent to the event that
$\{e_2\in\eta,e_3\not \in\eta\}$. Similarly, the event
$\{e_1\in\eta',e_2\not\in\eta'\}$ is equivalent to
$\{e_2\not\in\eta,e_3 \in\eta\}$.  Therefore, the l.h.s. of
\eqref{eq:dh1} equals
    \begin{eqnarray}
      1_{e_2\in\eta,e_3\not \in\eta}-1_{e_2\not\in\eta,e_3 \in\eta}=1_{e_2\in\eta}-1_{e_3\in\eta}
    \end{eqnarray}
    which equals the r.h.s. of \eqref{eq:dh1}.

    If instead $f_2$ is above $f_1$ then we see that the l.h.s. of \eqref{eq:dh1} equals
    \begin{eqnarray}
      1_{e_1\in\eta'}-1_{e_2\in \eta'}=1_{e_1\in\eta', e_2\not\in\eta'}-1_{e_2\in\eta',e_1\not\in\eta'}. 
    \end{eqnarray}
    From the definition of $F_0$ this is easily seen to be equal to
    \begin{eqnarray}
     1_{e_4\in\eta,e_3\not\in\eta}-1_{e_3\in\eta,e_4\not\in\eta}=1_{e_4\in\eta}-1_{e_3\in\eta}
    \end{eqnarray}
    which equals the r.h.s. of \eqref{eq:dh1}.
    \end{proof}

    \subsection{Proof of Theorem \ref{th:vrho}}
    \begin{proof}[Proof of \eqref{eq:vrho}]
      Since the process is stationary, it is clear that the second
      expression in \eqref{eq:vrho} is independent of $k $ and of
      $f$. We will therefore take $k=1$ and choose $f$ to be an even
      face of type $a$. From Definition \ref{def:newh} and Remark
      \ref{rem:newh} we see that the height increase in a step at $f$
      is
\begin{multline}
  \label{eq:perplesso}
h_{T(\eta)}(f)-h_{\eta}(f)\\=h_{\tau_{(-1,0)}[F_0(\eta)]}(\tau_{(0,1)}(f))-h_{\tau_{(-1,0)}[F_0(\eta)]}(f)+h_{\tau_{(-1,0)}[F_0(\eta)]}(f)-h_{\eta}(f)
\\=[h_{\tau_{(-1,0)}[F_0(\eta)]}(\tau_{(0,1)}(f))-h_{\tau_{(-1,0)}[F_0(\eta)]}(f)]+[h_\eta(\tau_{(1,0)}f)-h_\eta(f)].
\end{multline}
Now we take expectation over $\eta\sim\pi_\rho$ and obtain
\begin{eqnarray}
  \pi_\rho[h_\eta(\tau_{(1,0)}f)-h_\eta(f)]=1/4-c_2(\rho).
\end{eqnarray}
 On the other hand, 
\begin{eqnarray}
 \pi_\rho [h_{\tau_{(-1,0)}[F_0(\eta)]}(\tau_{(0,1)}(f))-h_{\tau_{(-1,0)}[F_0(\eta)]}(f)]=c_1(\rho)-1/4:
\end{eqnarray}
in fact, from Proposition \ref{prop:general} and Remark \ref{rem:ag} we have that, if
$\eta\sim\pi_\rho$, then $\tau_{(-1,0)}[F_0(\eta)]$ has the law
$\pi_\rho$ with ``$a$'' and ``$1$'' faces interchanged and the dimer configuration shifted by $-{\bf e_1}$ or, equivalently, has the law of a configuration sampled from $\pi_\rho$ and shifted by $+{\bf e_2}$. Altogether,
\eqref{eq:vrho} follows.
\end{proof}
\begin{Remark}
  If we assumed that $f$ is an even face of type ``$1$'' instead, we
  would get instead of \eqref{eq:vrho}
\begin{eqnarray}
  \label{eq:vrho2}
   v(\rho)=c_3(\rho)-c_4(\rho),
\end{eqnarray}
where $c_3(\rho)$ is the density of horizontal dimers with the $1$
face above and $c_4(\rho)$ the density of vertical dimers with the $1$
face on the left.  Reassuringly, one can prove (via a change of
variables in the integral kernels \eqref{eq:Kast-1} giving the
probabilities $c_i(\rho)$) that
  \begin{eqnarray}
    \label{eq:identita}
    c_1(\rho)-c_2(\rho)=c_3(\rho)-c_4(\rho).
  \end{eqnarray}
  \end{Remark}
  \begin{proof}[Proof of \eqref{eq:flutt1}]
    By stationarity of $\pi_\rho$, the law of $h_k(f)-h_0(f)$ is
    independent of $f$, so we assume without loss of generality that
    $f$ is an even face. Let $\Lambda_\ell$ be the collection of the
    $O(\ell^2)$ even faces  within distance $\ell$ from $f$ and let
    \begin{eqnarray}
      \label{eq:QL}
      Q_{\Lambda_\ell}(k):=\sum_{x\in{\Lambda_\ell}}Q_x(k), \qquad Q_x(k):=h_{k}(x)-h_0(x).
    \end{eqnarray}
    We have clearly
    \begin{eqnarray}
      Q_{{\Lambda_\ell}}(k+1)=      Q_{{\Lambda_\ell}}(k)+K_{{\Lambda_\ell}}(k),\qquad K_{\Lambda_\ell}(k):=
      \sum_{x\in {\Lambda_\ell}}h_{k+1}(x)-h_k(x) 
    \end{eqnarray}
    so that (letting $V_\ell(k):={\rm Var}_{\mathbb P_{\pi_\rho}}(Q_{\Lambda_\ell}(k))$)
    \begin{multline}
      \label{eq:var}
    V_\ell(k+1)-    V_\ell(k)\\=2\mathbb E_{\pi_\rho}\left[
      \left(Q_{\Lambda_\ell}(k)-\mathbb E_{\pi_\rho}Q_{\Lambda_\ell}(k)
      \right)\left(K_{\Lambda_\ell}(k)-\mathbb E_{\pi_\rho}(K_{\Lambda_\ell}(k))
      \right)
                                                                                                                                                \right]+{\rm Var}_{\mathbb P_{\pi_\rho}}(K_{\Lambda_\ell}(k))\\
      \le 2\sqrt{ V_\ell(k)}\sqrt{{\rm Var}_{\mathbb P_{\pi_\rho}}(K_{\Lambda_\ell}(k))}+{\rm Var}_{\mathbb P_{\pi_\rho}}(K_{\Lambda_\ell}(k)).
    \end{multline}
  We will prove in a moment the following
  \begin{Lemma}\label{lemma:vK}
    There exists $C$ such that, for every $k\ge0,\ell\ge1$,
    \begin{eqnarray}
      \label{eq:vK1}
      {\rm Var}_{\mathbb P_{\pi_\rho}}(K_{\Lambda_\ell}(k))={\rm Var}_{\mathbb P_{\pi_\rho}}(K_{\Lambda_\ell}(0)) \le C \ell^2\sigma^2(\ell)
    \end{eqnarray}
    where $\sigma(\cdot)$ is as in \eqref{eq:N}.
  \end{Lemma}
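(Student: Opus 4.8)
The plan is to exploit that the one-step height increments are bounded functionals that depend on the configuration $\eta$ only within a bounded window, so that the variance of a sum of $O(\ell^2)$ of them is governed by the decay of correlations of $\pi_\rho$. Concretely, set $g_x(\eta,\omega):=h_{T(\eta)}(x)-h_\eta(x)$ for $x$ an even face, where $\omega$ denotes the collection of creation-step coins. By \eqref{eq:perplesso}, $g_x$ is the sum of a gradient of $\eta$ and a gradient of $F_0(\eta)$ between faces at bounded distance, and each such gradient equals $\pm(1_{e\in\cdot}-1/4)$ for a single edge $e$; hence $|g_x|\le 3/2$, and because $F_0$ acts by spider moves, sliding and creation localized at even faces, $g_x$ depends only on $\eta$ restricted to a ball $B(x,d_0)$ of a fixed radius $d_0$ and on the creation coins at the even faces of $B(x,d_0)$ (these coins being i.i.d.\ and independent of $\eta$). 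The first equality in \eqref{eq:vK1} is then pure stationarity: under $\mathbb P_{\pi_\rho}$ the chain $(\eta(k))_k$ is stationary and the coin families of successive steps are i.i.d., so $(\eta(k),\omega^{(k+1)})$ has the law of $(\eta(0),\omega^{(1)})$, whence $K_{\Lambda_\ell}(k)=\sum_{x\in\Lambda_\ell}g_x(\eta(k),\omega^{(k+1)})$ has the same law, and in particular the same variance, as $K_{\Lambda_\ell}(0)$.

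For the bound I would expand ${\rm Var}_{\mathbb P_{\pi_\rho}}(K_{\Lambda_\ell}(0))=\sum_{x,y\in\Lambda_\ell}{\rm Cov}(g_x,g_y)$ and split over $|x-y|$. The crude bound $|{\rm Cov}(g_x,g_y)|\le C$ already handles the $O(\ell^2)$ pairs with $|x-y|\le 2d_0$. For $|x-y|>2d_0$ the $\eta$-supports and the coins entering $g_x$ and $g_y$ are disjoint, so conditionally on $\eta$ the two variables are independent, giving ${\rm Cov}(g_x,g_y)={\rm Cov}_{\pi_\rho}(\tilde g_x,\tilde g_y)$ with $\tilde g_x(\eta):=\mathbb E[g_x\mid\eta]$ a function of $\eta|_{B(x,d_0)}$ of sup-norm $\le 3/2$. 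At this point I invoke the decay of correlations of the determinantal measure $\pi_\rho$ — read off from the explicit asymptotics of its inverse Kasteleyn kernel recalled in Section \ref{sec:compv}, see also \cite{KOS03} — which, since the disconnected part cancels in a covariance, yields $|{\rm Cov}_{\pi_\rho}(\tilde g_x,\tilde g_y)|\le C|x-y|^{-2}$ when $\rho\in\mathcal L$ (liquid phase, kernel decaying like $|x-y|^{-1}$) and $|{\rm Cov}_{\pi_0}(\tilde g_x,\tilde g_y)|\le Ce^{-c|x-y|}$ when $\rho=0,\ a<1$ (gaseous phase), with constants uniform in $x,y$. Summing, the near-diagonal part contributes $O(\ell^2)$ in both cases, while the tail contributes $C\sum_{x\in\Lambda_\ell}\sum_{1\le|z|\le 2\ell}|z|^{-2}\le C\ell^2\log\ell$ in the liquid case and $C\sum_{x\in\Lambda_\ell}\sum_z e^{-c|z|}\le C\ell^2$ in the gaseous case; both are $\le C\ell^2\sigma^2(\ell)$ with $\sigma(\cdot)$ as in \eqref{eq:N}.

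The only genuinely non-routine ingredient is the correlation-decay estimate for $\pi_\rho$, which for dimers is classical (dimer–dimer correlations decaying like the inverse square of the distance in the liquid phase, exponentially in the gaseous phase \cite{KOS03}); extending it from single-edge indicators to bounded functions of finitely many edges is immediate upon expanding into products of indicators, and it is precisely the borderline non-summability of $|x-y|^{-2}$ in two dimensions that produces the $\sqrt{\log}$ in $\sigma(\cdot)$. The two points I expect to need a little care are the uniformity of the constants over the (finitely many, by $2$-periodicity) shapes of the functions $\tilde g_x$, and recording — although it is not strictly needed since everything is projected onto $\eta$ — that $F_0(\eta)$ is itself a translated $2$-periodic dimer Gibbs measure in the same phase, by Proposition \ref{prop:general} and Remark \ref{rem:ag}.
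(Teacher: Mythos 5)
Your proof is correct and follows essentially the same route as the paper's: stationarity of the chain, the bounded and local form of the one-step increments coming from \eqref{eq:perplesso}, and the \cite{KOS03} dichotomy (inverse-square correlation decay for $\rho\in\mathcal L$, exponential decay for $\rho=0$, $a<1$), summed to give $O(\ell^2\log\ell)$ resp.\ $O(\ell^2)$. The only minor difference is bookkeeping: the paper splits $K_{\Lambda_\ell}$ into the $\eta$-part and the $F_0(\eta)$-part (each a sum of single-edge indicators under a stationary Gibbs law, combined by Cauchy--Schwarz, using that $F_0(\eta)$ is again Gibbs), whereas you integrate out the creation coins by conditioning on $\eta$, which requires the (standard, determinantal) extension of the decay estimate from dimer--dimer correlations to bounded local functions.
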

  Given this claim, it is easy to conclude the proof of
  \eqref{eq:flutt1}. Indeed, we see that (with $C_i$ denoting 
  positive constants)
  \begin{eqnarray}
    V_{\ell}(k+1)-V_{\ell}(k)\le C_1
      \sqrt{V_{\ell}(k)} \ell\sigma(\ell)+\ell^2\sigma^2(\ell). 
  \end{eqnarray}
  A simple inductive argument 
 allows to deduce, for $k=\ell$,
  \begin{eqnarray}
    \label{eq:ld}
    V_{k}(k)\le  C_2 k^4 \sigma^2(k).
  \end{eqnarray}
(Just check by induction that $V_\ell(k)\le C_2 k^2 \ell^2\sigma^2(\ell)$  for $k=0,1,\dots,\ell$, if $C_2\ge \max(1,C_1^2/4)$.)

  Now we are ready to prove \eqref{eq:flutt1}.
  Write
  \begin{eqnarray}
   & \mathbb P_{\pi_\rho}(|h_k(f)-h_0(f)-v(\rho)k|\ge u \sigma(k))\\\nonumber
   & =\mathbb P_{\pi_\rho}(|Q_f(k)-\mathbb E_{\pi_\rho}Q_f(k)|\ge u\sigma(k))\\\nonumber
 &   =
   \mathbb P_{\pi_\rho}(|Q_f(k)-\mathbb E_{\pi_\rho}Q_f(k)|\ge u\sigma(k);|Q_{\Lambda_k}(k)-\mathbb E_{\pi_\rho}Q_{\Lambda_k}(k)|\le  \sqrt{u}k^2\sigma(k))
    +o(1)
  \end{eqnarray}
where $o(1)$ tends to zero as $u\to\infty$ uniformly in $k$, by Tchebyshev's inequality, thanks to \eqref{eq:ld}.
On the other hand
\begin{gather}
  Q_{\Lambda_k}(k)-  \mathbb E_{\pi_\rho} Q_{\Lambda_k}(k)=-A_k(0)+A_k(k)+\tilde A_k\\
A_\ell(k)
  :=\sum_{x\in\Lambda_\ell}[h_k(x)-h_{k}(f)-\pi_\rho(h_k(x)-h_k({f}))]\\
\tilde A_\ell:=
|\Lambda_\ell|[Q_{f}(k)-\mathbb E_{\pi_\rho} Q_{f}(k)].
\label{eq:qf}
\end{gather}
In the second line, $\pi_\rho(h_k(x)-h_k({f}))$ is actually time-independent.
The law of $A_\ell(k)$ is time-independent by stationarity of $\pi_\rho$ and moreover:
\begin{Lemma}
  \label{lemma:dh}
  One has 
\begin{gather}
 \label{tec3}
 \pi_\rho\left[A_k(0)^2\right]=
     O(k^4\sigma^2( k)).
\end{gather}
\end{Lemma}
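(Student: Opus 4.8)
The plan is to bound \eqref{tec3} by reducing it to the known spatial fluctuation behaviour of the infinite-volume Gibbs measure $\pi_\rho$ recalled in Section~\ref{sec:notations}, combined with Cauchy--Schwarz and a crude count of the faces in $\Lambda_k$. Since $A_k(0)=\sum_{x\in\Lambda_k}\big(h_0(x)-h_0(f)-\pi_\rho(h_0(x)-h_0(f))\big)$ is a sum of $\pi_\rho$-centered random variables, expanding the square and applying Cauchy--Schwarz to each covariance (after bounding it by its absolute value) gives
\[
\pi_\rho\!\left[A_k(0)^2\right]=\sum_{x,y\in\Lambda_k}\mathrm{Cov}_{\pi_\rho}\!\big(h_0(x)-h_0(f),\,h_0(y)-h_0(f)\big)\ \le\ \Big(\sum_{x\in\Lambda_k}\sqrt{V(x)}\Big)^{2},
\]
where $V(z):=\mathrm{Var}_{\pi_\rho}\big(h_0(z)-h_0(f)\big)$.

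Next I estimate $V(z)$ for $z\in\Lambda_k$. For $\rho\in\mathcal L$ the measure $\pi_\rho$ is in the rough (liquid) phase and its height field behaves on large scales like a two-dimensional Gaussian free field, so $V(z)\le C\log(|z-f|+2)$; for $\rho=0$, $a<1$ the measure is in the smooth (gaseous) phase, with exponentially decaying correlations, and $V(z)\le C$ uniformly in $z$. In either case, since every $z\in\Lambda_k$ is at distance at most $k$ from $f$, we obtain $V(z)\le C'\,\sigma^2(k)$ for all $z\in\Lambda_k$, with $\sigma(\cdot)$ as in \eqref{eq:N} (in the liquid case one uses $\log(k+2)\le C''\log(k+1)$ for $k\ge1$). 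Plugging this into the display and using $|\Lambda_k|=O(k^2)$ gives $\pi_\rho[A_k(0)^2]\le C'\,\sigma^2(k)\,|\Lambda_k|^2=O(k^4\sigma^2(k))$, which is exactly \eqref{tec3}.

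The argument is thus short, and its essential input is \emph{external}: it relies on the classical fact --- see \cite{KOS03} and the subsequent analysis of the two-periodic dimer model --- that the height variance under $\pi_\rho$ grows at most logarithmically in the liquid phase and remains bounded in the gaseous phase, i.e. $\alpha=0$ in \eqref{eq:alpha} with no logarithmic correction in the smooth phase. Internally, the only point requiring (minor) care is to line up the two regimes of $\sigma(\cdot)$ in \eqref{eq:N} with the two phases and to absorb the $O(k)$ diameter of $\Lambda_k$ inside the logarithm. If a sharper estimate were ever needed one could instead use the explicit decay of the determinantal kernel of $\pi_\rho$ --- $\mathrm{Cov}_{\pi_\rho}(1_{e\in\eta},1_{e'\in\eta})$ is, up to signs, a product of two entries of the inverse Kasteleyn matrix, decaying polynomially (resp. exponentially) in $|e-e'|$ --- but the Cauchy--Schwarz bound above is already of the order claimed in Lemma~\ref{lemma:dh}, so this refinement is not carried out.
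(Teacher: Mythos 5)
Your proof is correct and follows essentially the same route as the paper's: the paper dismisses the lemma as an immediate consequence of the known fact (from \cite{KOS03}) that $\mathrm{Var}_{\pi_\rho}(h(x)-h(y))$ grows like $\log|x-y|$ in the rough phase and is uniformly bounded in the smooth phase, which is exactly the input you use. Your version merely makes explicit the Cauchy--Schwarz step and the $|\Lambda_k|=O(k^2)$ count that the paper leaves implicit.
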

Therefore, by Tchebyshev's inequality,
\begin{gather}
  \label{eventicchio} |A_k(0)|,|A_k(k)|\le 
{\sqrt u} k^{2}\sigma(k)
  ,
\end{gather}
with probability $1+o(1)$ as $u\to\infty$.
Finally, we note from \eqref{eq:qf} that if event \eqref{eventicchio} holds and 
at the same time
$|Q_{\Lambda_k}(k)-  \mathbb E_{\pi_\rho}
Q_{\Lambda_k}(k)|\le
\sqrt{u}k^2\sigma(k)
 $, since $|\Lambda_k|$ grows proportionally to $k^2$ one cannot
have $|Q_f(k)-\mathbb E_{\pi_\rho}Q_f(k)|\ge u\sigma(k)$ for $u$ large. Eq. \eqref{eq:flutt1} is then proven.
  \end{proof}

  \begin{proof}[Proof of Lemma \ref{lemma:vK}]
    Given that the sum in the definition of $K_{\Lambda_\ell}(k)$ runs over even faces, we apply \eqref{eq:perplesso} to write
    \begin{gather*}
      K_{\Lambda_\ell}(k)=K^{(1)}_{\Lambda_\ell}(k)+K^{(2)}_{\Lambda_\ell}(k)\\
      K^{(j)}_{\Lambda_\ell}(k)=\sum_{x\in\Lambda_\ell}r^{(j)}_x(k)\\
      r^{(1)}_x(k)=h_{\tau_{(-1,0)}[F_0(\eta(k))]}(\tau_{(0,1)}(x))-h_{\tau_{(-1,0)}[F_0(\eta(k))]}(x)\\
      r^{(2)}_x(k)=h_{\eta(k)}(\tau_{(1,0)}(x))-h_{\eta(k)}(x)
    \end{gather*}
    and by Cauchy-Schwarz it is sufficient to upper bound  the variances of $K^{(j)}_{\Lambda_\ell}(k),j=1,2$
    separately. By definition of the map $F_0$ and of the height function, 
    \begin{eqnarray}
      \label{eq:r12}
      r^{(1)}_x(k)=1_{e'_x\in [\tau_{(-1,0)}F_0(\eta(k))]}-1/4,\qquad r^{(2)}_x(k)=1/4-1_{e_x\in\eta(k)}
    \end{eqnarray}
    with $e_x$ (resp. $e'_x$) the vertical edge connecting the
    top-right and the bottom-right (resp. top-right and top-left)
    vertices of face $x$.  Since the laws of $\eta(k)$ and
    $F_0(\eta(k))$ are stationary, the variance of
    $K^{(j)}_{\Lambda_\ell}(k),j=1,2$ are independent of $k$. One has
    \begin{eqnarray}
      \label{eq:varK2}
	    {\rm Var}_{\mathbb{P}_{\pi_\rho}}(K^{(2)}_{\Lambda_\ell}(k))=\sum_{x,y\in\Lambda_\ell}\pi_\rho(e_x\in\eta;e_y\in\eta)
    \end{eqnarray}
    with $\pi(A;B)$ denoting the covariance of $A,B$. It is known
    \cite{KOS03} that dimer-dimer correlations decay like the inverse
    distance square if $\pi_\rho$ is a rough phase (i.e. $\rho\in\mathcal L$), and exponentially
    fast if $\pi_\rho$ is a smooth phase. Then, inequality
    \eqref{eq:vK1} for $K^{(2)}_{\Lambda_\ell}(k)$ immediately
    follows. The argument for $K^{(1)}_{\Lambda_\ell}(k)$ is
    essentially identical.
  \end{proof}

  \begin{proof}[Proof of Lemma \ref{lemma:dh}]
    This is an immediate consequence of the known fact that, if
    $\pi_\rho$ is a rough phase (as is the case for $\rho\ne0$ or
	  $\rho=0,a=1$) then the variance ${\rm Var}_{{\pi_\rho}}(h(x)-h(y))$
    grows proportionally to $\log |x-y|$ when $|x-y|\to\infty$, while
    if $\pi_\rho$ is a smooth phase then the variance is uniformly
    bounded in $x,y$ \cite{KOS03}.
  \end{proof}
\section{Proof of Theorem \ref{th:v}}
\label{sec:compv}
It is easy to prove \eqref{eq:v} when $|\rho_1|=|\rho_2|$ (i.e. when
$\rho_+\rho_-=0$), by symmetry considerations.  Indeed, a reflection
across a line passing through the center of a face and forming an angle $-\pi/4$ with respect to the horizontal
axis maps $\pi_{(\rho_1,\rho_2)}$ to $\pi_{(\rho_2,\rho_1)}$ and, from
definition \eqref{eq:vrho}, changes sign of the speed
$v(\rho)=c_1(\rho)-c_2(\rho)$ since it exchanges edges $e_1$ and $e_2$ in \eqref{eq:c1c2}. In other words, \eqref{eq:simmv} holds. 
 Similarly, a reflection
across a line forming an angle $\pi/4$ with respect to the horizontal
axis maps $\pi_{(\rho_1,\rho_2)}$ to $\pi_{(-\rho_2,-\rho_1)}$
and implies \eqref{eq:simmv1} (see caption of Fig. \ref{fig:symm}). To obtain \eqref{eq:simmv2}, take reflections w.r.t. horizontal/vertical lines throught the center of a face.
\begin{figure}
\begin{center}
\includegraphics[height=4cm]{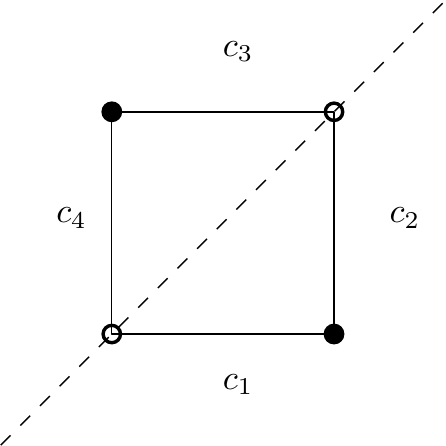} 
\end{center}
\caption{An even face of type $a$, with $c_1,\dots,c_4$ denoting the densities of the four edges on its boundary, so that $v=c_1-c_2$, $\rho_1=c_4-c_2,\rho_2=c_3-c_1$. After reflection across the dashed line, $c_1$ turns into $c_4$ and $c_2$ into $c_3$. Therefore, the slope becomes $(-\rho_2,-\rho_1)$ and the speed becomes $c_4-c_3=\rho_1-\rho_2-v(\rho_1,\rho_2)$.} 
\label{fig:symm}
\end{figure}
Note that indeed the r.h.s. of \eqref{eq:v} satisfies symmetries
\eqref{eq:simmv}-\eqref{eq:simmv2}.

As a consequence of the symmetries, the speed
must vanish when $\rho_1=\rho_2$ (i.e. when $\rho_-=0$) and must equal
$\rho_1$ when $\rho_2=-\rho_1$, i.e. when $\rho_+=0$, as  the r.h.s. of \eqref{eq:v} indeed does.

In the rest of the section, we will therefore assume that $|\rho_1|\ne|\rho_2|$.
\subsection{Kasteleyn matrix} 

\label{sec:Kmatr} The speed of growth $v(\rho)$ is given by the difference between the
probabilities of two events. We begin by recalling how to express
probabilities of local events via the Kasteleyn matrix  and
how to rewrite its matrix elements as single integrals in the complex
plane, as was done in~\cite{CJ16}.   We adopt a similar coordinate system to the one used in~\cite{CJ16} but we have chosen to interchange the white and black vertices, so that we can keep the same height change conventions from a previous paper~\cite{chhita2017speed}.

 Namely, with reference to Fig.~\ref{fig:fundamental}, where the graph has been rotated $45$ degrees clockwise, we
assign to each vertex in $\mathbb Z^2$ coordinates $x=(x_1,x_2)$ with
$x_1+x_2\in 2\mathbb Z+1$. Then, the set of black vertices is
$$
\mathtt{B}= \{(x_1,x_2) \in \mathbb{Z}^2: x_1\!\!\! \mod 2=0, x_2 \!\!\!\mod 2=1\}
$$
and the set of white vertices is
$$
\mathtt{W}= \{(y_1,y_2) \in \mathbb{Z}^2: y_1 \!\!\!\mod 2=1, y_2\!\!\! \mod 2=0\}.
$$
For $i \in\{0,1\}$, we also let
$$
\mathtt{B}_i= \{(x_1,x_2) \in \mathtt{B}: x_1+x_2\!\!\! \mod4 =2i+1 \}
$$
and
$$
\mathtt{W}_i= \{(y_1,y_2) \in \mathtt{W}: y_1+y_2 \!\!\!\mod4 =2i+1 \}.
$$
The fundamental domain of the two-periodic weighting is given by a 2
by 2 block consisting of a vertex from each
$\mathtt{W}_0, \mathtt{W}_1, \mathtt{B}_0$ and $\mathtt{B}_1$.  We
suppose that if the vertex $b \in \mathtt{B}_0$ is in the fundamental
domain, then so are the vertices $b+ \hat e_2 \in \mathtt{W}_0$,
$b+\hat e_1 \in \mathtt{W}_1$, and $b+\hat e_1 +\hat e_2\in \mathtt{B}_1$, with
$\hat e_1 = (1,1)$ and $\hat e_2=(-1,1)$.  We impose that the edges inside the
fundamental domain have weight $a$ and the edges crossing to another
fundamental domain have weight $1$; see Fig.~\ref{fig:fundamental}.

\begin{figure}
\begin{center}
\includegraphics[height=7cm]{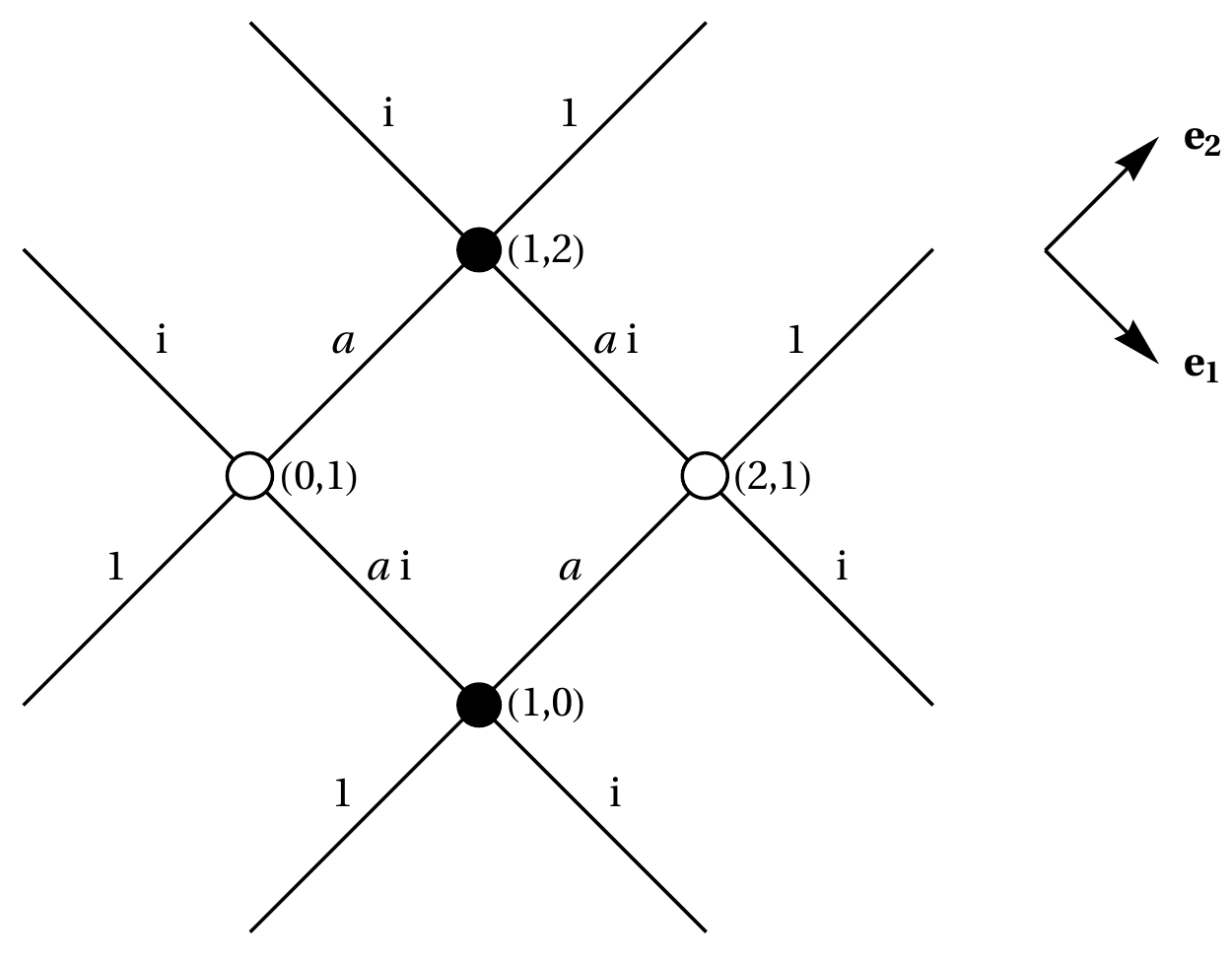} 
\end{center}
\caption{
  The fundamental domain and the coordinates of its four
  vertices, where the coordinate axes (not drawn) are horizontal/vertical, with the origin in the center of the bottom-left face.  The vectors
  ${\bf e_1},{\bf e_2}$ are instead the 45 degree rotations of the
  vectors introduced in Section \ref{sec:actual}, and they denote the two direction
  w.r.t. which we compute the slope. }
\label{fig:fundamental}
\end{figure}

The Kasteleyn matrix $\mathbb K$ is a weighted version of the adjacency matrix of $\mathbb Z^2$. The matrix element 
$\mathbb K(y,x)$ is zero if $x\in \mathtt{B} $ and $y\in \mathtt{W}$ are not neighbors. If they are neighbors, then 
$\mathbb K(y,x)$ equals the weight ($a$ or $1$) of the edge from $x$ to $y$, times i (the imaginary unit) if the edge is parallel to $\hat e_2$. The non-zero values of the matrix elements are therefore $a,\textrm{i}a,1,\textrm{i}$.

Given positive real numbers $r_1,r_2$, we introduce the ``inverse Kasteleyn matrix'' $\mathbb K^{-1}_{r_1,r_2}$ as follows.
Define first the $2\times 2$ matrix  
\[
K(z,w) = \left(  \begin{array}{cc}
\mathrm{i}(a+w^{-1}) & a+z \\
a+z^{-1} & \mathrm{i}(a+w) 
\end{array}
\right)
\]
 and the \emph{characteristic polynomial}
  \begin{eqnarray}
    \label{eq:chpol}
    P(z,w)=-\det K(z,w)=a\left(\frac2c+w+\frac1w+z+\frac1z\right).   
  \end{eqnarray}
  Let $x \in \mathtt{B}_{\e_1}$ and $y \in \mathtt{W}_{\e_2}$ and let
  $(u,v)\in\mathbb Z^2$ be such that the unique fundamental domain
  containing $x$ (resp. $y$) has black vertex in $\mathtt{B_0}$ with
  coordinates $X$ (resp. $Y$) satisfying
  $Y-X=2(u \hat e_1+v \hat e_2)$. Then, we let
\begin{equation}
\label{eq:Kast-1}
\mathbb{K}^{-1}_{r_1,r_2} (x, y )=\frac{1}{(2 \pi \mathrm{i})} \int_{\Gamma_{r_1}}\frac{dz}{z} \int_{\Gamma_{r_2}} \frac{dw}{w}  \left( K(z,w) \right)^{-1}_{\e_1+1,\e_2+1} z^u w^v
\end{equation}
where the integrals are taken in the complex plane and $\Gamma_{r}$ is a contour of radius $r$ around the origin, oriented anti-clockwise.

The link between the Gibbs measure $\pi_\rho$ and the Kasteleyn matrix
is as follows \cite{KOS03}. 
Whenever the slope $\rho$ corresponds to a rough phase, i.e. $\rho\in\mathcal L$ (recall \eqref{eq:calL})
there exists a unique choice
$r=(r_1,r_2)=r(\rho)$ (magnetic coordinates) such that, given any integer $n$ and edges 
$e_i,i\le n$ with black/white vertices of coordinates $x_i$ and $y_i$ respectively, one has
\begin{eqnarray}
  \label{eq:statistics}
  \pi_\rho(e_1,\dots,e_n\in \eta)=\left(\prod_{i=1}^n \mathbb K(y_i,x_i)\right) \det\{\mathbb K^{-1}_{r_1,r_2}(x_i,y_j)\}_{i,j\le n}.
\end{eqnarray}
The image of the curve $P(z,w)=0$ in $\mathbb{C}^2$ under the map
$(z,w) \mapsto (\log |z|, \log |w|)$ is called the \emph{amoeba} of
$P$, often denoted by $\mathbb{A}(P)$. The set
$\mathcal B=\{r:r=r(\rho) \text{ for some } \rho\in\mathcal L\}$ is
the interior component of the amoeba and the correspondence
$r\in\mathcal B\leftrightarrow \rho(r)\in\mathcal L$ is a bijection
\cite{KOS03}.  
In the rest of this section, we assume that
$r\in\mathcal B$. Note that this is not a restriction since $\rho=0$ has already been taken into account.

Both the speed $v(\rho)$ (through \eqref{eq:vrho}) and the slope $\rho$ (through \eqref{eq:hxy}) can be expressed as linear combinations of probabilities as in the l.h.s. of \eqref{eq:statistics}, with $n=1$.
Before working out the explicit expressions, let us recall a few known facts \cite{CJ16} about $\mathbb K^{-1}_{r_1,r_2}$.
Set
$$
h(\e_1,\e_2)=\e_1 (1-\e_2) + \e_2 (1-\e_1)
$$
and 
$$
\tilde{c}(u_1,u_2)=2(1+a^2)+a(u_1+u_1^{-1})(u_2+u_2^{-1}).
$$
Note that $\tilde{c}(u_1,u_2)=P(u_1/u_2,u_1 u_2)$.
We next state the following lemma without proof from~\cite{CJ16}:
\begin{Lemma}[Lemma 4.3 from~\cite{CJ16}] \label{CJ16Lem4.3}
 For $x=(x_1,x_2)\in\mathtt{B}_{\e_1}$ and $y=(y_1,y_2)\in\mathtt{W}_{\e_2}$ with $\e_1, \e_2\in \{0,1\}$,
\begin{equation}
\mathbb{K}^{-1}_{r_1,r_2} (x,y)= \frac{-\mathrm{i}^{1+h(\e_1,\e_2)}}{(2 \pi \mathrm{i})^2}  \int_{\Gamma_{R_1}} \frac{du_1}{u_1}  \int_{\Gamma_{R_2}} \frac{du_2}{u_2} \frac{ a^{\e_2} u_2^{h(\e_1,\e_2)-1}+a^{1-\e_2}u_1 u_2^{-h(\e_1,\e_2)}}{\tilde{c}(u_1,u_2) u_1^{\frac{x_1-y_1+1}{2}} u_2^{\frac{y_2-x_2-1}{2}}}
\end{equation}
where  $R_1=\sqrt{r_1/r_2}$ and $R_2 = 1/\sqrt{r_1 r_2}$.
\end{Lemma}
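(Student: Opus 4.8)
Since the displayed identity is (a transcription of) Lemma~4.3 of \cite{CJ16}, adapted to the present conventions in which the colours of black and white vertices are interchanged, the honest answer is that one simply quotes it; for completeness, here is the route by which one would derive it from scratch, starting from the general formula \eqref{eq:Kast-1} for the inverse Kasteleyn matrix of a doubly periodic bipartite graph. The first move is to invert the $2\times2$ matrix $K(z,w)$ by hand: since $\det K(z,w)=-P(z,w)$ with $P$ the characteristic polynomial of \eqref{eq:chpol}, the adjugate formula gives
\[
(K(z,w))^{-1}=\frac{-1}{P(z,w)}\begin{pmatrix}\I(a+w)&-(a+z)\\-(a+z^{-1})&\I(a+w^{-1})\end{pmatrix},
\]
so that the entry $(K^{-1})_{\e_1+1,\e_2+1}$ equals $-\I^{\,1+h(\e_1,\e_2)}$ times $N_{\e_1,\e_2}(z,w)/P(z,w)$, where $N_{\e_1,\e_2}$ runs through $a+w,\ a+z,\ a+z^{-1},\ a+w^{-1}$ as $(\e_1,\e_2)$ ranges over $\{0,1\}^2$; this already accounts for the scalar prefactor $-\I^{\,1+h(\e_1,\e_2)}$ in the statement.

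Next I would substitute this into \eqref{eq:Kast-1} and change integration variables from $(z,w)$ to $(u_1,u_2)$ adapted to the spectral curve, namely $z=u_1/u_2$, $w=1/(u_1u_2)$. Under this substitution $P(z,w)$ becomes $\tilde c(u_1,u_2)$ — using the identity $\tilde c(u_1,u_2)=P(u_1/u_2,u_1u_2)$ recorded above the statement together with the invariance of $P$ under $w\mapsto w^{-1}$ — while each numerator $N_{\e_1,\e_2}$ becomes, up to an explicit monomial in $u_1,u_2$, the polynomial $a^{\e_2}u_2^{\,h-1}+a^{1-\e_2}u_1u_2^{-h}$ (this is a one-line check in each of the four cases). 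Simultaneously the circular contours $\Gamma_{r_1},\Gamma_{r_2}$ are carried, after the appropriate deformation, onto $\Gamma_{R_1},\Gamma_{R_2}$ with $R_1=\sqrt{r_1/r_2}$ and $R_2=1/\sqrt{r_1r_2}$ — one checks $|z|=R_1/R_2$, $|w|=1/(R_1R_2)$ — and the fact that these are admissible contours for the new integrand is precisely where one uses that $r$ lies in the interior component $\mathcal B$ of the amoeba (the discussion around \eqref{eq:chpol}); the Jacobian factor $2$ of the substitution is cancelled by the two-fold redundancy $(u_1,u_2)\mapsto(-u_1,-u_2)$ in the choice of $(u_1,u_2)$, so that the measure is reproduced with the prefactor as stated. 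Finally the monomial $z^uw^v$ becomes a monomial in $u_1,u_2$ whose exponents are linear in the $\mathtt B_0$-fundamental-domain coordinates $X,Y$ of $x,y$; translating $X,Y$ back to the true vertex coordinates $(x_1,x_2),(y_1,y_2)$ via the definitions of $\mathtt B_i,\mathtt W_i$ and of $\hat e_1=(1,1),\hat e_2=(-1,1)$, and absorbing the residual monomials left over from the numerators, produces the denominator $u_1^{(x_1-y_1+1)/2}u_2^{(y_2-x_2-1)/2}$ exactly as written.

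The analytic content here is elementary (a $2\times2$ matrix inversion and a monomial change of variables); the only genuinely delicate point is the bookkeeping — reconciling the $45^\circ$-rotated frame, the four parity classes $\mathtt B_i,\mathtt W_i$, the black/white relabelling relative to \cite{CJ16}, the $2{:}1$ nature of $(u_1,u_2)\mapsto(z,w)$ (equivalently the choice of branch of the square roots defining $u_1,u_2$), and the precise powers of $\I$ and of $a$. Since all of this is carried out carefully in \cite{CJ16}, we simply invoke that reference.
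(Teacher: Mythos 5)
Your proposal matches the paper exactly in approach: the paper states this lemma without proof, citing \cite{CJ16} and remarking only that it follows from a change of variables explaining the shift of contours, which is precisely what you do. Your supplementary sketch (adjugate inversion of $K(z,w)$, the substitution $z=u_1/u_2$, $w=1/(u_1u_2)$ turning $P$ into $\tilde c$, the $2{:}1$ redundancy absorbing the Jacobian, and the contour moduli $|z|=r_1$, $|w|=r_2$) checks out and is consistent with the derivation in \cite{CJ16}.
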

The proof of the above lemma, see~\cite{CJ16}, follows from a change of variables, which also explains the shift in contours of integration. 
The following are from~\cite{CJ16} and are useful for later computations. Define for $w \in\mathbb{C} \backslash \mathrm{i}[-\sqrt{2c},\sqrt{2c}]$ 
\begin{eqnarray}
  \label{eq:sqrt}
\sqrt{w^2+2c} =e^{\frac{1}{2} \log (w +\mathrm{i}\sqrt{2c})+ \frac{1}{2} \log (w -\mathrm{i}\sqrt{2c})}  
\end{eqnarray}
where the logarithm takes arguments in $(-\pi/2,3\pi/2)$. Write $\sqrt{1/w^2+2c}$ for the same function evaluated at $1/w$.
Set
$$
G(w)=\frac{1}{\sqrt{2c}} (w-\sqrt{w^2+2c}).
$$
Note that, since $c\le 1/2$, the branch cut $\mathrm{i}[-\sqrt{2c},\sqrt{2c}]$  is included in $[-\mathrm{i},\mathrm{i}]$. The choice in branch cut also gives that 
\begin{eqnarray}
  \label{eq:G-G}
 \sqrt{w^2+2c}=-\sqrt{(-w)^2+2c}.
\end{eqnarray}
Let us note the following, for later use:
\begin{Remark}
  \label{rem:zeta}
  We have 
  \begin{eqnarray}
    \label{eq:argsqrtz}
    \sqrt{z^2+2c}=\left\{
    \begin{array}{ll}
      \sqrt{x^2+2c}& \text{if $z=x\in\mathbb R^+$}\\
      \sqrt{2c-y^2}& \text{if $z=iy+0^+, -\sqrt{2c}< y<\sqrt{2c}$}\\
      i \sqrt{y^2-2c} & \text{if $z=iy+0^+,y>\sqrt{2c}$}\\
      -i \sqrt{y^2-2c}&\text{if $z=iy+0^+,y<-\sqrt{2c}$}
    \end{array}
               \right..
  \end{eqnarray}
  If $z\to i\sqrt{2c}+0^+$ (resp.  $z\to -i\sqrt{2c}+0^+$), then
  any limit point of $\arg\sqrt{z^2+2c} $ is in $[0, \pi/2]$ (resp. in $[- \pi/2,0]$).
\end{Remark}

The transformation $u=G(\omega)$ is a map between $|u|<1$ and the
whole plane (without the cut) in $\omega$. For
$u=Re^{ \mathrm{i} \theta}$ with $R<1$, the inverse of this
transformation is given by
\begin{equation}\label{speed:cov}
\omega = \sqrt{\frac{c}{2}} (R e^{\mathrm{i} \theta}- R^{-1} e^{-\mathrm{i} \theta}).
\end{equation}
Moreover, when $R<1$ and $\theta$ runs from $0$ to $2\pi$, $\omega$ in
\eqref{speed:cov} runs (clockwise) over an ellipse, that does not cross the cut. We   denote by $\gamma_R$ the ellipse, oriented \emph{anti-clockwise}.

For $x= (x_1,x_2) \in \mathtt{B}_{\e_1}$ and $y=(y_1,y_2) \in\mathtt{W}_{\e_2}$ define for $i \in \{1,2\}$
\begin{equation}
\begin{split}
k_i&= \frac{x_2-y_2+(-1)^i}{2}-(-1)^i h(\e_1,\e_2)  \\
l_i&=\frac{y_1-x_1+(-1)^i}{2}.
\end{split}
\end{equation}
Define for two contours $\gamma$ and $\gamma'$ (that avoid the branch cut) and $k,l \in \mathbb{Z}$
\begin{equation}
  \begin{split}
    \label{eq:2contorni}
\tilde{D}_{\gamma,\gamma'}(k,l)=\frac{\mathrm{i}^{-k-l}}{(2 \pi \mathrm{i})^22(1+a^2)} \int_\gamma d \omega_1 \int_{\gamma'}d \omega_2  \frac{ G (\omega_1)^l G( \omega_2)^{k} }{(1-\omega_1 \omega_2 ) \sqrt{\omega_1^2+2c} \sqrt{\omega_2^2+2c}}
\end{split}
\end{equation}
and also define
\begin{equation}
\begin{split}
D_{\gamma,\gamma'}(x,y)=-\mathrm{i}^{1+h(\e_1,\e_2)} (a^{\e_2} \tilde{D}_{\gamma,\gamma'}(k_1,l_1)+a^{1-\e_2} \tilde{D}_{\gamma,\gamma'}(k_2,l_2)).
\end{split}
\end{equation}
We state the following lemma without proof from~\cite{CJ16}.
\begin{Lemma}[Lemma 4.4 in~\cite{CJ16}]\label{CJ16Lem4.4}
Let $R_1= \sqrt{r_1/r_2}$ and $R_2=1/\sqrt{r_1 r_2}$  with $R_1, R_2<1$. Then, we have 
\begin{equation}
\mathbb{K}^{-1}_{r_1,r_2}(x,y) = D_{\gamma_{R_1},\gamma_{R_2}} (x,y).
\end{equation}
\end{Lemma}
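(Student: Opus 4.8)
The plan is to obtain Lemma~\ref{CJ16Lem4.4} from Lemma~\ref{CJ16Lem4.3} by the change of variables $u_i=G(\omega_i)$, $i=1,2$, in the double contour integral. Recall that $\omega\mapsto G(\omega)$, with inverse $\omega=\sqrt{c/2}\,(u-u^{-1})$, is a biholomorphism from $\{0<|u|<1\}$ onto $\mathbb C\setminus\mathrm i[-\sqrt{2c},\sqrt{2c}]$ taking $\Gamma_{R_i}$ onto $\gamma_{R_i}$; since $R_i<1$ forces $R_i+R_i^{-1}>2$, the ellipse $\gamma_{R_i}$ avoids the branch cut, so the square roots below are unambiguous along the contours. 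Differentiating $G$ one gets $G'(\omega)=-G(\omega)/\sqrt{\omega^2+2c}$, hence $du_i/u_i=-d\omega_i/\sqrt{\omega_i^2+2c}$; since the substitution reverses the orientation of the contour, the minus sign is compensated and $\int_{\Gamma_{R_i}}f(u_i)\,\frac{du_i}{u_i}=\int_{\gamma_{R_i}}f(G(\omega_i))\,\frac{d\omega_i}{\sqrt{\omega_i^2+2c}}$.

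First I would reorganize the integrand of Lemma~\ref{CJ16Lem4.3}. Since $x\in\mathtt B$, $y\in\mathtt W$, all the exponents occurring are integers, and a short bookkeeping using the definitions of $h(\e_1,\e_2),k_i,l_i$ shows
\[
\frac{a^{\e_2}u_2^{h-1}+a^{1-\e_2}u_1u_2^{-h}}{u_1^{(x_1-y_1+1)/2}\,u_2^{(y_2-x_2-1)/2}}=a^{\e_2}u_1^{l_1}u_2^{k_1}+a^{1-\e_2}u_1^{l_2}u_2^{k_2}.
\]
Next, writing $A_i=\sqrt{\omega_i^2+2c}$, the relations $u_i+u_i^{-1}=-\sqrt{2/c}\,A_i$ and $G(\omega_1)G(\omega_2)=\tfrac1{2c}(\omega_1-A_1)(\omega_2-A_2)$ give, using $2a/c=2(1+a^2)$,
\[
\tilde c\big(G(\omega_1),G(\omega_2)\big)=2(1+a^2)\,(1+A_1A_2).
\]
Performing the substitution therefore turns Lemma~\ref{CJ16Lem4.3} into
\[
\mathbb K^{-1}_{r_1,r_2}(x,y)=\frac{-\mathrm i^{1+h}}{(2\pi\mathrm i)^2}\int_{\gamma_{R_1}}\!\int_{\gamma_{R_2}}\frac{a^{\e_2}G(\omega_1)^{l_1}G(\omega_2)^{k_1}+a^{1-\e_2}G(\omega_1)^{l_2}G(\omega_2)^{k_2}}{2(1+a^2)\,(1+A_1A_2)\,A_1A_2}\,d\omega_1\,d\omega_2.
\]

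Comparing this with the definition of $D_{\gamma_{R_1},\gamma_{R_2}}(x,y)$ via $\tilde D_{\gamma,\gamma'}$, and noting that the parity constraints coming from $x\in\mathtt B$, $y\in\mathtt W$ force $k_i+l_i$ to be even (so that $\mathrm i^{-k_i-l_i}=(-1)^{(k_i+l_i)/2}\in\{\pm1\}$), the lemma reduces to the identity
\[
\int_{\gamma_{R_1}}\!\int_{\gamma_{R_2}}\frac{G(\omega_1)^lG(\omega_2)^k}{(1+A_1A_2)\,A_1A_2}\,d\omega_1\,d\omega_2=\mathrm i^{-k-l}\int_{\gamma_{R_1}}\!\int_{\gamma_{R_2}}\frac{G(\omega_1)^lG(\omega_2)^k}{(1-\omega_1\omega_2)\,A_1A_2}\,d\omega_1\,d\omega_2
\]
for each of the two admissible pairs $(k,l)=(k_i,l_i)$. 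This is the heart of the argument and the step I expect to be the main obstacle: it is not a pointwise identity, since $1+A_1A_2\ne 1-\omega_1\omega_2$. I would establish it by writing, on the contours, $(1+A_1A_2)^{-1}=\mathrm i^{-k-l}(1-\omega_1\omega_2)^{-1}+R_{k,l}$ and showing that $R_{k,l}\,G(\omega_1)^lG(\omega_2)^k/(A_1A_2)$ is holomorphic in, say, $\omega_2$ in the region enclosed by $\gamma_{R_2}$, so that the inner integral vanishes. Here one uses that $G$ is nowhere zero and, like $\omega\mapsto1/\sqrt{\omega^2+2c}$, single-valued off the slit (which lies inside $\gamma_{R_2}$), so that the only candidate singularities inside $\gamma_{R_2}$ are zeros of $1-\omega_1\omega_2$ and of $1+A_1A_2$, whose position relative to the ellipses — a function of the magnetic coordinates — must be tracked; the parity relations above are what pin down the sign $\mathrm i^{-k-l}$. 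The substitution itself is routine, so this residue/analyticity bookkeeping is where the work lies. A possible alternative, which sidesteps it entirely, is to check directly that $D_{\gamma_{R_1},\gamma_{R_2}}$ is a right inverse of the Kasteleyn operator — $\sum_{x}\mathbb K(y,x)\,D_{\gamma_{R_1},\gamma_{R_2}}(x,y')=1_{y=y'}$ — by residue computations, together with uniqueness of the inverse Kasteleyn matrix associated with the Gibbs measure $\pi_\rho$.
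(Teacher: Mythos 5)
Your reduction is set up correctly (the exponent bookkeeping giving $u_1^{l_i}u_2^{k_i}$, the identity $u+u^{-1}=-\sqrt{2/c}\,\sqrt{\omega^2+2c}$, hence $\tilde c(G(\omega_1),G(\omega_2))=2(1+a^2)(1+A_1A_2)$, and the parity $k_i+l_i\in2\mathbb Z$ are all right), but the step you yourself flag as the heart of the matter is left unproven, and the strategy you sketch for it would not work. You propose to write $(1+A_1A_2)^{-1}=\mathrm i^{-k-l}(1-\omega_1\omega_2)^{-1}+R_{k,l}$ and kill the remainder by holomorphy of $R_{k,l}\,G(\omega_1)^lG(\omega_2)^k/(A_1A_2)$ inside $\gamma_{R_2}$; but the branch cut $\mathrm i[-\sqrt{2c},\sqrt{2c}]$ of $1/\sqrt{\omega_2^2+2c}$, and of $G(\omega_2)^k$ when $k<0$, lies \emph{inside} the ellipse $\gamma_{R_2}$ (the contours encircle the cut, as remarked right after the lemma in the paper), so "no poles inside" gives nothing; moreover no $(k,l)$-independent relation between the two denominators can produce the $(k,l)$-dependent sign $\mathrm i^{-k-l}$. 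Your fallback (checking that $D$ is an inverse of $\mathbb K$ and invoking uniqueness) is also only stated, and would additionally require identifying which of the many inverses on $\mathbb Z^2$ is the one appearing in \eqref{eq:statistics}.

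The gap closes with a one-line modification that turns the whole proof into a genuine change of variables: substitute $u_j=\mathrm i\,G(\omega_j)$ rather than $u_j=G(\omega_j)$. Then $u_j+u_j^{-1}=\mathrm i\,(G(\omega_j)-G(\omega_j)^{-1})=\mathrm i\sqrt{2/c}\,\omega_j$, so $\tilde c(u_1,u_2)=2(1+a^2)\bigl(1-\omega_1\omega_2\bigr)$ \emph{exactly} (no $1+A_1A_2$ ever appears), while the monomials contribute $u_1^{l_i}u_2^{k_i}=\mathrm i^{\,k_i+l_i}G(\omega_1)^{l_i}G(\omega_2)^{k_i}$ and $\mathrm i^{\,k_i+l_i}=\mathrm i^{-k_i-l_i}$ by your parity observation — this is precisely where the prefactor $\mathrm i^{-k-l}$ in the definition \eqref{eq:2contorni} of $\tilde D$ comes from. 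The rotation by $\mathrm i$ preserves the circles $\Gamma_{R_j}$, so the image contour is still the ellipse $\gamma_{R_j}$, and $du/u=dG/G=-d\omega/\sqrt{\omega^2+2c}$ with the orientation flip compensating the sign, exactly as in your Jacobian computation. Putting the pieces together reproduces $D_{\gamma_{R_1},\gamma_{R_2}}(x,y)$ verbatim, with no residue or contour-deformation analysis needed. This is, up to presentation, the change of variables the paper attributes to \cite{CJ16}; note that the paper itself states the lemma without proof, its one-sentence description ("the change of variables $u=G(\omega)$") omitting precisely the rotation by $\mathrm i$ on which the matching of denominators and of the $\mathrm i^{-k-l}$ factor hinges.
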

The proof of this lemma involves using the change of variables $u=G(\omega)$  for each variable $u_1$ and $u_2$.
Note that the contours $\gamma_{R_1},\gamma_{R_2}$ encircle the branch cut $\mathrm{i}[-\sqrt{2c},\sqrt{2c}]$ of $G(\omega_j),j=1,2$.
\begin{Remark}
  \label{rem:genth}
The general theory of \cite{KOS03} implies that, whenever $r\in\mathcal B$ (equivalently, when 
$r=r(\rho)$ with 
$\rho\in\mathcal L$), the
polynomial $P(r_1 z,r_2 w)$ has two and only two zeros on
$\mathbb T=\{(z,w)\in\mathbb C^2,|z|=|w|=1\}$, that are simple and conjugate, i.e. of the form $(z_0, w_0) $ and $(\overline{z_0}, \overline{w_0})$. \end{Remark}
We have the following
lemma which is a culmination and extension of Lemmas 3.3 and 4.5 
from~\cite{CJ16}.
\begin{Lemma}\label{lem:existsomega}
 Let 
  $r\in\mathcal B$, let  $R_1, R_2$ be defined as in Lemma~\ref{CJ16Lem4.4} as functions
  of $r$ and assume $R_1,R_2<1$.
Then,  there exists a unique
  $\omega_c \in \mathbb{R}_{>0} \times \mathrm{i} \mathbb{R}_{>0}$
  such that
\begin{equation}
\begin{split}
\tilde{D}_{\gamma_{R_1},\gamma_{R_2}} (k,l)&= 
\frac{\mathrm{i}^{-k-l}}{4\pi \mathrm{i}(1+a^2)} \left(\int_{\overline{\omega}_c}^{\omega_c} d \omega+\int_{-\overline{\omega}_c}^{-\omega_c} d \omega\right) \frac{ G(\omega^{-1})^l G(\omega)^k}{\omega \sqrt{\omega^2+2c} \sqrt{\omega^{-2}+2c}} \\
&+\tilde{S}(k,l)
\end{split}
\end{equation}
where the integration path from $\overline{\omega_c}$ to $\omega_c$ (resp. from
  $-\overline{\omega_c}$ to $-\omega_c$) stays in the half-plane with positive (resp. negative) real part, 
\begin{equation}
\tilde{S}(k,l)= \left\{ \begin{array}{ll}
0 & \mbox{if } k \geq 0 \mbox{ or } l \geq 0 \\
a^{-1} & \mbox{if }k=l=-1 \\ 
\frac{\mathrm{i}^{-k-l}}{(2 \pi \mathrm{i})^2 2(1+a^2)} \int_{\Gamma_{R}} d \omega_1 \int_{\Gamma_R} d\omega_2 \frac{ G(\omega_1)^l G(\omega_2)^k}{(1-\omega_1 \omega_2) \sqrt{\omega_1^2+2c} \sqrt{\omega_2^2+2c}} & \mbox{otherwise,}\end{array} \right.
\end{equation}
and $R>1$.
\end{Lemma}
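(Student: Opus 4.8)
The plan is to evaluate $\tilde D_{\gamma_{R_1},\gamma_{R_2}}(k,l)$ by a two‑step contour deformation in the $\omega$‑variables. Recall from~\eqref{speed:cov} that the ellipse $\gamma_R$ is exactly $\{\omega:|G(\omega)|=R\}$ (with $G(\omega)$ in the unit disk), so that in the variable $u_j=G(\omega_j)$ the contour $\gamma_{R_j}$ becomes the circle $|u_j|=R_j$, the branch cut $\mathrm{i}[-\sqrt{2c},\sqrt{2c}]$ collapses to $|u_j|=1$, and $\mathrm{d}u_j/u_j=-\mathrm{d}\omega_j/\sqrt{\omega_j^2+2c}$ accounts for the square‑root factors; this is the picture underlying Lemmas~3.3 and 4.5 of~\cite{CJ16}, which I will reuse and extend. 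First I would deform the $\omega_1$‑contour outward from $\gamma_{R_1}$ to a large circle $\Gamma_R$, $R>1$ (equivalently, shrink $|u_1|$ from $R_1$ towards $0$). The only singularity crossed is the simple pole $\omega_1=1/\omega_2$, and it is crossed precisely for those $\omega_2\in\gamma_{R_2}$ with $|G(1/\omega_2)|<R_1$.

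To locate $\omega_c$: the transition set is $\{\omega_2\in\gamma_{R_2}:|G(1/\omega_2)|=R_1\}$, i.e. the points at which $\omega_1:=1/\omega_2$ lies on $\gamma_{R_1}$. Writing $u_1=G(\omega_1)$, $u_2=G(\omega_2)$ and $z=u_1/u_2$, $w=u_1u_2$, the relations $|u_1|=R_1$, $|u_2|=R_2$, $\omega_1\omega_2=1$ become $|z|=r_1$, $|w|=1/r_2$ and $(w+1/w)-(z+1/z)=2/c$, the last being equivalent to $P(z,-w)=0$. By $P(z,w)=P(z,1/w)$ and Remark~\ref{rem:genth}, there are exactly two such pairs $(z,w)$, forming a conjugate pair (the images of the unique conjugate pair of zeros of $P(r_1\cdot,r_2\cdot)$ on $\mathbb T$). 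Since $(u_1,u_2)\mapsto(z,w)$ is $2$‑to‑$1$, conjugation acts by $\omega_j\mapsto\bar\omega_j$, and $G(-\omega)=-G(\omega)$, the four transition points on $\gamma_{R_2}$ are $\pm\omega_c$ and $\pm\bar\omega_c$ for a single $\omega_c$; the standing hypothesis $|\rho_1|\neq|\rho_2|$ excludes degenerate positions on the coordinate axes, and a sign check (using $R_1,R_2<1$) places exactly one of them in $\mathbb R_{>0}\times\mathrm{i}\mathbb R_{>0}$ — this is $\omega_c$, unique because the torus zero is unique up to conjugation. The same analysis identifies the two arcs of $\gamma_{R_2}$ on which the residue is actually collected as the one joining $\bar\omega_c$ to $\omega_c$ inside $\{\Re>0\}$ and the one joining $-\bar\omega_c$ to $-\omega_c$ inside $\{\Re<0\}$, traversed consistently with the anticlockwise orientation of $\gamma_{R_2}$.

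Assembling the formula: on these arcs the residue of $1/(1-\omega_1\omega_2)$ at $\omega_1=1/\omega_2$ is $-1/\omega_2$, so after tracking the $2\pi\mathrm{i}$ and orientation factors the collected contribution equals $\frac{\mathrm{i}^{-k-l}}{4\pi\mathrm{i}(1+a^2)}\int_{\mathrm{arcs}}\frac{G(1/\omega_2)^lG(\omega_2)^k}{\omega_2\sqrt{\omega_2^2+2c}\sqrt{\omega_2^{-2}+2c}}\,\mathrm{d}\omega_2$. Every singularity of this integrand — the cut $\mathrm{i}[-\sqrt{2c},\sqrt{2c}]$, the cut of $\sqrt{\omega_2^{-2}+2c}$, and the point $\omega_2=0$ — lies on the imaginary axis, so each arc may be deformed within its half‑plane to the straight segment between its endpoints; renaming $\omega_2=\omega$ gives the two segment integrals of the statement. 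The leftover $\frac{\mathrm{i}^{-k-l}}{(2\pi\mathrm{i})^22(1+a^2)}\int_{\Gamma_R}\mathrm{d}\omega_1\int_{\gamma_{R_2}}\mathrm{d}\omega_2(\cdots)$ becomes $\tilde S(k,l)$ once $\gamma_{R_2}$ is pushed out to $\Gamma_R$ as well: for $\omega_1\in\Gamma_R$ the point $1/\omega_1$ stays on the cut‑side of $\gamma_{R_2}$, so no residue is crossed and one gets the ``otherwise'' expression; expanding at $\omega_2=\infty$ with $G(\omega)=-\sqrt{c/2}\,\omega^{-1}+O(\omega^{-3})$, $(\omega^2+2c)^{-1/2}=\omega^{-1}+O(\omega^{-3})$ and $(1-\omega_1\omega_2)^{-1}=-(\omega_1\omega_2)^{-1}+O(\omega_2^{-2})$ kills the $\omega_2^{-1}$ coefficient when $k\ge0$ (symmetrically in $\omega_1$ when $l\ge0$), so $\tilde S=0$ there, while for $k=l=-1$ the leading term is $(2/c)/(\omega_1\omega_2)$, whose double residue produces $1/(c(1+a^2))=a^{-1}$.

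The crux, and the step I expect to cost the most, is the second paragraph: rigorously identifying the transition points with the images of the torus zeros, proving that exactly one of them lies in the open first quadrant, and establishing that the residue is collected on the two ``$\Re$‑definite'' arcs rather than on the two arcs meeting the imaginary axis. This is exactly where the precise KOS statement (Remark~\ref{rem:genth}: two simple zeros on $\mathbb T$ forming a conjugate pair) is indispensable, together with a careful bookkeeping of the chain of changes of variables $(z,w)\leftrightarrow(u_1,u_2)\leftrightarrow(\omega_1,\omega_2)$, the matching $r\leftrightarrow(R_1,R_2)$ of magnetic to ellipse coordinates, and the symmetries $P(z,w)=P(1/z,w)=P(z,1/w)$, $G(-\omega)=-G(\omega)$ and complex conjugation; the hypotheses $R_1,R_2<1$ and $|\rho_1|\neq|\rho_2|$ enter precisely here.
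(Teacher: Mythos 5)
Your argument is, in substance, the paper's own proof with two cosmetic variations: you deform the $\omega_1$-contour outward from $\gamma_{R_1}$ to $\Gamma_R$ and collect the residue at $\omega_1=1/\omega_2$ (residue $-1/\omega_2$), whereas the paper first inverts $\omega_1\mapsto 1/\omega_1$, deforms the inverted contour $\tilde\gamma_{R_1}$ inward to a small circle and undoes the inversion at the end; and you identify the crossing condition with the torus zeros in the coordinates $u_j=G(\omega_j)$ (via $P(z,-w)=0$ with $|z|=r_1$, $|w|=1/r_2$, then $P(z,w)=P(z,1/w)$), while the paper parametrizes the two ellipses by angles and lands directly on $P(r_1z,r_2w)=0$. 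Both routes produce the same four crossing points, the same two ``real-axis'' arcs of $\gamma_{R_2}$, and the same evaluation of the leftover $\Gamma_R\times\Gamma_R$ integral at infinity, so the architecture is identical and your bookkeeping of the residue term reproduces the stated prefactor $\mathrm{i}^{-k-l}/(4\pi\mathrm{i}(1+a^2))$.

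The genuine gap is the step you yourself flag as the crux, and your proposed justification for part of it is not the right mechanism. That the four transition points are distinct, avoid $\mathbb{R}\cup\mathrm{i}\mathbb{R}$, and are therefore $\pm\omega_c,\pm\overline\omega_c$ with exactly one in each open quadrant is \emph{not} obtained in the paper from $|\rho_1|\ne|\rho_2|$; in the proof of Claim~\ref{claim:4} it comes from the structure of the torus zeros in Remark~\ref{rem:genth}: $(z_0,w_0)$ cannot be both real (the two conjugate zeros would coincide) nor both purely imaginary (direct inspection of \eqref{eq:chpol}), hence at least one of $z_0/w_0$, $z_0w_0$ is non-real, and the four admissible values of $u_1$ (or of $u_2$, exchanging roles) are distinct, one per quadrant; this is what excludes crossings on the axes. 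Likewise, the determination of which pair of arcs collects the residue is settled in the paper by the explicit observation that, since $R_1,R_2<1$, the inverted ellipse $\tilde\gamma_{R_1}$ lies outside $\gamma_{R_2}$ on the real axis and inside it on the imaginary axis; your deferred ``sign check'' $|G(1/\omega_2)|\lessgtr R_1$ at the axis points of $\gamma_{R_2}$ is the equivalent verification and must actually be carried out. Finally, a compensating slip in the $k=l=-1$ case: the leading behaviour of the integrand at infinity is $-\tfrac{2}{c}(\omega_1\omega_2)^{-1}$ (the minus sign comes from $(1-\omega_1\omega_2)^{-1}$), and only after multiplying by the prefactor $\mathrm{i}^{-k-l}=-1$ does one get $+1/(c(1+a^2))=a^{-1}$; as written, your stated leading term and your stated final value differ by a sign.
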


\begin{proof}[Proof of Lemma \ref{lem:existsomega}] The function in the r.h.s. of \eqref{eq:2contorni} has a pole whenever $\omega_1=1/\omega_2$. We show first of all:
  \begin{Claim}
    \label{claim:4}
    There exist four distinct values of $\omega_1\in \gamma_{R_1}$
    such that $\omega_2:=1/\omega_1\in \gamma_{R_2}$ and there is
    exactly one of them for each of the four quadrants of
    $\mathbb C\setminus \{\mathbb R\cup \mathrm{ i}\mathbb R\}$. We
    call by convention $\omega_c$ the one that is in
    $ \mathbb{R}_{>0} \times \mathrm{i} \mathbb{R}_{>0}$, the others
    being $-\omega_c,\overline{\omega_c},-\overline{\omega_c}$.
  \end{Claim}
  \begin{proof}
    Since each $\omega_i,i=1,2$ runs along the
  clockwise-oriented ellipse $\gamma_{R_i}$ defined by
  \eqref{speed:cov},  we have
\begin{equation}
\begin{split}
\omega_1 \omega_2&=\frac{c}{2} \left(R_1 e^{\mathrm{i} \theta_1} - R_1^{-1} e^{-\mathrm{i} \theta_1} \right)\left(R_2 e^{\mathrm{i} \theta_2} - R_2^{-1} e^{-\mathrm{i} \theta_2} \right) \\
&= \frac{c}{2} \left( \frac{1}{r_2} e^{\mathrm{i}(\theta_1+\theta_2)} -\frac{1}{r_1} e^{-\mathrm{i}(\theta_1-\theta_2)}-r_1e^{\mathrm{i}(\theta_1-\theta_2)} +r_2 e^{-\mathrm{i}(\theta_1+\theta_2)} \right).
\end{split}
\end{equation}
	Set $u_1=\mathrm{i} e^{\mathrm{i} \theta_1}$ and $u_2=\mathrm{i}e^{\mathrm{i} \theta_2}$. Then,
\begin{equation}
  \begin{split}
\omega_1 \omega_2&=\frac{c}{2} \left(-\frac{1}{r_2} u_1 u_2-r_2 \frac{1}{u_1 u_2} -r_1 \frac{u_1}{u_2} - \frac{1}{r_1} \frac{u_2}{u_1} \right).
\end{split}
\end{equation}
Set $w=1/(u_1u_2)$ and $z=u_1/u_2$ and note $|z|=|w|=1$. This gives
\begin{equation}
\begin{split}
\omega_1 \omega_2= \frac{c}{2} \left(- {w}{r_2 }  -\frac1{r_2 w} -r_1 z -\frac{1}{r_1 z} \right) .
\end{split}
\end{equation} Comparing with \eqref{eq:chpol} 
we conclude that $1 - \omega_1 \omega_2=0$ is equivalent to having $P(z r_1, w r_2)=0$. 
Recall from Remark \ref{rem:genth} that $z_0,w_0$ are not both real.
We also have that they cannot be both purely imaginary, since it is clear
from \eqref{eq:chpol} that $P(z_0r_1,w_0r_2)$ would not be zero in that case. Therefore, we can assume that at least one
among $z_0/w_0$ and $z_0w_0$ is not real, and assume that the former is the case (in the other case, the following
argument works the same exchanging the roles of $u_1$ and $u_2$). Recalling the definitions of $z,w$ in terms of $u_1,u_2$
we see that the condition $\omega_1\omega_2=1$ implies that $u_1$ can have only one of the four possible values
$\pm\sqrt{z_0/w_0},\pm \sqrt{\overline{z_0}/\overline{w_0}}$. The four values are distinct and there is one of them
in each of the four quadrants of $\mathbb C\setminus \{\mathbb R\cup \mathrm{i}\mathbb R\}$. As a consequence, Claim
\ref{claim:4} immediately follows.
     \end{proof}
     We resume the proof of Lemma \ref{lem:existsomega}.  Via the
     change of variables $\omega_1\mapsto \omega_1^{-1}$,
\begin{equation}
\begin{split}\label{eq:contourgoodform}
\tilde{D}_{\gamma_{R_1},\gamma_{R_2}}(k,l)&=\frac{\mathrm{i}^{-k-l}}{(2 \pi \mathrm{i})^22(1+a^2)} \int_{\tilde{\gamma}_{R_1}} \frac{d \omega_1}{\omega_1} \int_{\gamma_{R_2}}d \omega_2  \frac{ G (\omega_1^{-1})^l G( \omega_2)^{k} }{(\omega_1 -\omega_2 ) \sqrt{\omega_1^{-2}+2c} \sqrt{\omega_2^2+2c}}
\end{split}
\end{equation}
where $\tilde{\gamma}_{R_1}$ represents the contour from
$\gamma_{R_1}$ under the image $\omega \mapsto 1/\omega$,
oriented anti-clockwise.  Claim \ref{claim:4} indicates that
$\tilde{\gamma}_{R_1}$ intersects $\gamma_{R_2}$ at four points,
$\omega_c, -\omega_c, \overline{\omega}_c$, and
$-\overline{\omega}_c$. Since $R_1, R_2<1$, the contour
$\tilde{\gamma}_{R_1}$ is outside of $\gamma_{R_2}$ on the real axis
and inside $\gamma_{R_2}$ on the imaginary axis. Note also that the
branch cut 
$\mathrm{i}(-\infty,-1/\sqrt{2c}]\cup \mathrm {i}[1/\sqrt{2c},\infty)$
of $G(1/\cdot)$ is outside the contour $\tilde\gamma_{R_1}$.  Fixing
$\omega_2$ we integrate over $\omega_1$, by deforming the contour
$\tilde{\gamma}_{R_1}$ to a circle $\Gamma_{1/R}$ with small
$1/R<R_2<1$.  The deformation crosses a pole $\omega_1=\omega_2$ if
and only if $\omega_2$ is in the portion of $\gamma_{R_2}$ from
$-\overline{\omega_c}$ to $\omega_c$ or from $\overline{\omega_c}$ to
$-\omega_c$. Fig.~\ref{fig:contours} shows these deformations.
\begin{figure}
\begin{center}
\includegraphics[width=4cm]{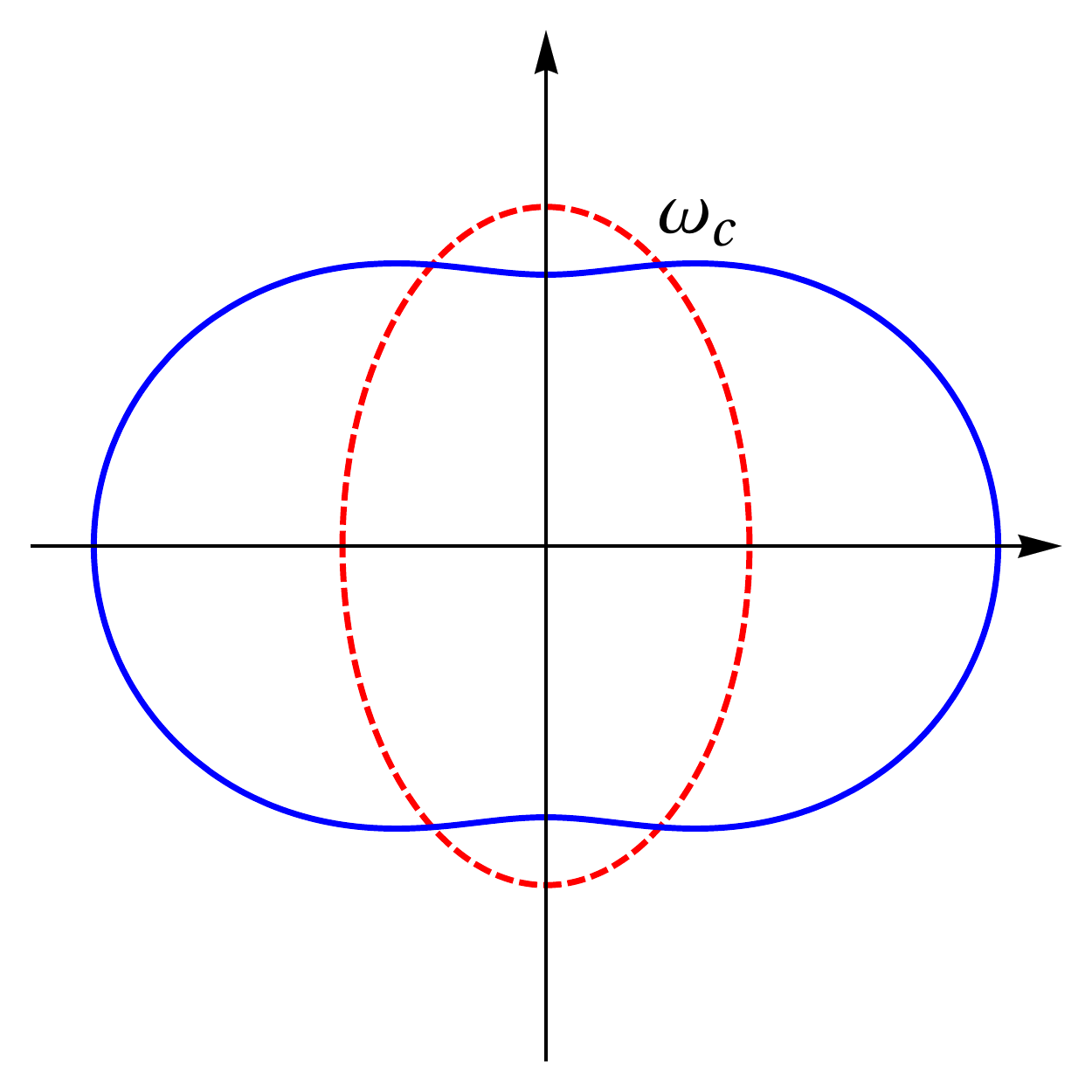}
\hspace{2mm}
\includegraphics[width=4cm]{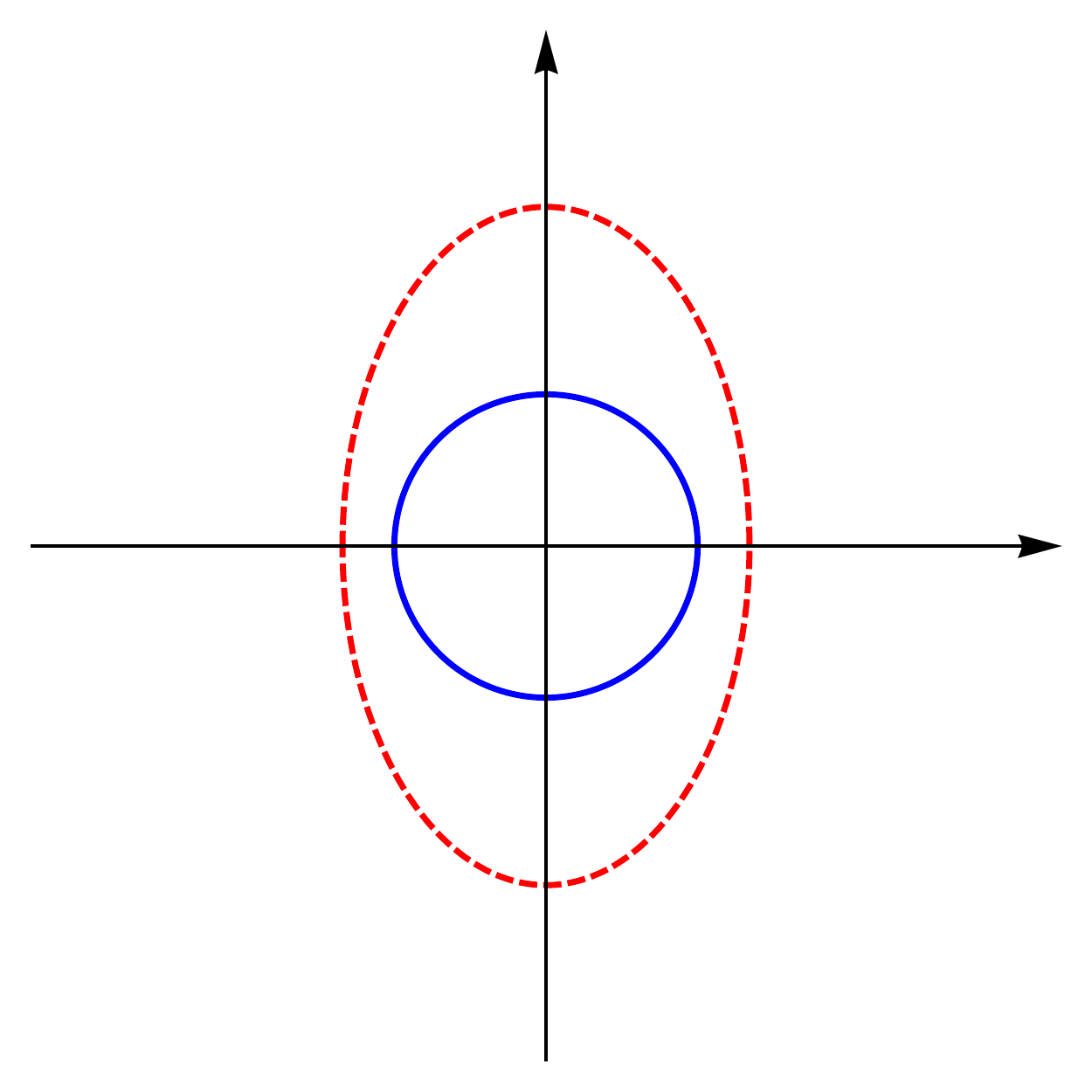}
\hspace{2mm}
\includegraphics[width=4cm]{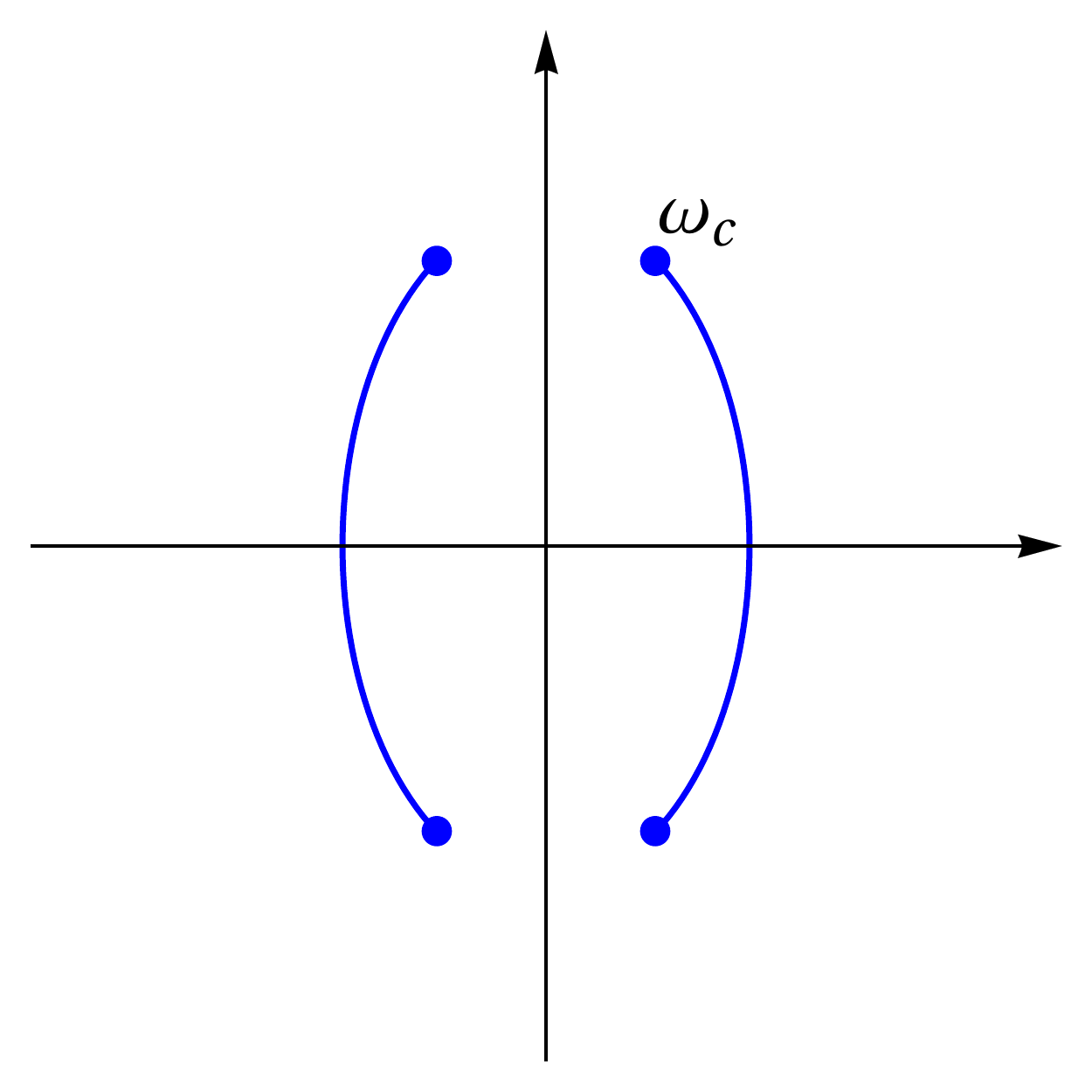}
\caption{The left figure shows the contour in $\tilde{\gamma}_{R_1}$
  in blue and the contour $\gamma_{R_2}$ in red (dashed). The
  deformation of moving the blue contour inside the red contour, shown
  in the center figure, picks up a single contour integral from the
  simple pole contribution on the r.h.s of the integrand
  in~\eqref{eq:contourgoodform}, as shown in the right figure.}
\label{fig:contours}
\end{center}
\end{figure}

 The integral becomes then
\begin{equation}
\begin{split}
  \tilde{D}_{\gamma_{R_1},\gamma_{R_1}}(k,l)&=\frac{\mathrm{i}^{-k-l}}{4 \pi \mathrm{i}\,(1+a^2)} \left( \int_{\overline{\omega}_c}^{\omega_c} +  \int_{-\overline{\omega}_c}^{-\omega_c}  \right) \frac{d \omega}{\omega}  \frac{ G (\omega^{-1})^l G( \omega)^{k} }{ \sqrt{\omega^{-2}+2c} \sqrt{\omega^2+2c}} \\
  &+\frac{\mathrm{i}^{-k-l}}{(2 \pi \mathrm{i})^22(1+a^2)}
  \int_{\Gamma_{1/R}} \frac{d \omega_1}{\omega_1} \int_{\gamma_{R_2}}d
  \omega_2 \frac{ G (\omega_1^{-1})^l G( \omega_2)^{k} }{(\omega_1
    -\omega_2 ) \sqrt{\omega_1^{-2}+2c} \sqrt{\omega_2^2+2c}}
\end{split}
\end{equation}
where the first line is the contribution from the pole.
For the double integral in the above equation, we can deform $\gamma_{R_2}$ to $\Gamma_{R}$ because $1/R<1<R$, so that the branch cut of $G(\cdot)$ remains inside the contour. We can then apply the change of variables $\omega_1 \mapsto 1/\omega_1$  and so the double integral becomes
\begin{equation}
  \frac{\mathrm{i}^{-k-l}}{(2 \pi \mathrm{i})^22(1+a^2)} \int_{\Gamma_{R}} {d \omega_1} \int_{\Gamma_{R}}d \omega_2  \frac{ G (\omega_1)^l G( \omega_2)^{k} }{(1-\omega_1 \omega_2 ) \sqrt{\omega_1^{2}+2c} \sqrt{\omega_2^2+2c}} ,
\end{equation}
where the orientation of $\Gamma_{R}$ is as usual anti-clockwise.  To show that the above integral is equal to
$\tilde{S}(k,l)$, notice that $G(\omega)$ behaves like
$\sqrt{c}/(\sqrt{2}\omega)$ as $|\omega| \to \infty$ and when
$k\geq 0$ or $l \geq 0$, we can push one or the other contour through infinity to
get that the integral is zero.  Also when $l=k=-1$, we can compute the
residue at infinity which gives $1/a$ and hence the double integral in
the above equation is equal to $\tilde{S}(k,l)$.
\end{proof}
\begin{Remark}
\label{rem:arg}
Note that, as a consequence of $\arg \omega_c \in (0,\pi/2)$, one has
$\arg G(\omega_c) \in (\pi/2,\pi)$. An easy way to see this is to
observe that $\log G(\omega)$ is analytic in the open quadrant
$ \mathbb{R}_{>0} \times \mathrm{i} \mathbb{R}_{>0}$, so that its
imaginary part $\arg(G(\omega))$ is harmonic. On the other hand, along
the positive real axis $\arg G(\omega)=\pi$, while along the positive
imaginary axis,  $\arg G(\omega)\in[\pi/2,\pi]$, as follows from Remark \ref{rem:zeta}. Given that at infinity
$G(\omega)\sim -c/\omega$ whose argument is also in $[\pi/2,\pi]$ and
since $\omega_c$ is in the interior of the quadrant, we deduce the
claim.  A similar argument gives
$\arg G(\omega_c^{-1}) \in (\pi,3\pi/2)$.
\end{Remark}

The following result reduces by symmetry the case of $R_1,R_2$ not both smaller than $1$ to the case $R_1,R_2<1$:
\begin{Lemma}\label{lem:allslopes} Let $r\in\mathcal B$ and  
  assume $x=(x_1,x_2)\in\mathtt{B}_{\e_1}$,
  $y=(y_1,y_2)\in\mathtt{W}_{\e_2}$ with $\e_1, \e_2\in \{0,1\}$.  If
  $R_1=\sqrt{r_1/r_2}\ne1$ and $R_2 = 1/\sqrt{r_1 r_2}\ne1$.  Then,
	\begin{equation}
\begin{split}
		\mathbb{K}^{-1}_{r_1,r_2} (x,y) &= -\mathrm{i}^{1+h(\e_1,\e_2)} \Bigg( a^{\e_2} \tilde{D}_{\tilde{\gamma}_1, \tilde{\gamma}_2} (
(1-2\delta_2) k_1,  (1-2 \delta_1)l_1)) \\& + a^{1-\e_2} \tilde{D}_{\tilde{\gamma}_1,\tilde{\gamma}_2} ((1-2\delta_2) k_2,  (1-2 \delta_1)l_2)   \Bigg)
\end{split}
	\end{equation}
	where  for $i \in\{0,1\}$
	\begin{equation}
		\tilde{\gamma}_{i} = \left\{ \begin{array}{ll}
			\gamma_{R_i} & \mbox{if $R_i<1$} \\
		\gamma_{1/R_i} & \mbox{if $R_i>1$} \end{array}\right. 
	\end{equation}
and 
	\begin{equation}
\label{deltai}
		\delta_{i} = \left\{ \begin{array}{ll}
			0 & \mbox{if $R_i<1$} \\
		1 & \mbox{if $R_i>1$}.\end{array}\right. 
	\end{equation}

\end{Lemma}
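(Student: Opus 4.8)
The plan is to deduce Lemma~\ref{lem:allslopes} from Lemma~\ref{CJ16Lem4.4}, which already covers the case $R_1,R_2<1$, by inserting the change of variables $u_j\mapsto 1/u_j$ — for each index $j$ with $R_j>1$ — into the integral representation of Lemma~\ref{CJ16Lem4.3}, \emph{before} the substitution $u_j=G(\omega_j)$ is performed. Two preliminary remarks. First, Lemma~\ref{CJ16Lem4.3} holds for every $r\in\mathcal B$: the change of variables deriving it from \eqref{eq:Kast-1} does not use $R_1,R_2<1$. Second, absorbing the denominator powers $u_1^{-(x_1-y_1+1)/2}u_2^{-(y_2-x_2-1)/2}$ into the numerator $a^{\e_2}u_2^{h(\e_1,\e_2)-1}+a^{1-\e_2}u_1u_2^{-h(\e_1,\e_2)}$ and matching against the definitions of $h(\e_1,\e_2),k_i,l_i$, one finds
\begin{equation*}
\mathbb{K}^{-1}_{r_1,r_2}(x,y)=-\mathrm i^{1+h(\e_1,\e_2)}\bigl(a^{\e_2}\,I_{R_1,R_2}(k_1,l_1)+a^{1-\e_2}\,I_{R_1,R_2}(k_2,l_2)\bigr),
\end{equation*}
where $I_{\rho_1,\rho_2}(k,l)$ is the iterated integral $(2\pi\mathrm i)^{-2}\int_{\Gamma_{\rho_1}}\int_{\Gamma_{\rho_2}}u_1^{l}u_2^{k}\,\tilde c(u_1,u_2)^{-1}\,\frac{du_1}{u_1}\frac{du_2}{u_2}$ (for $r\in\mathcal B$, $\tilde c$ vanishes at only finitely many points of $\Gamma_{\rho_1}\times\Gamma_{\rho_2}$, so this is understood as in \eqref{eq:Kast-1}). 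The substitution $u_j=G(\omega_j)$ used in the proof of Lemma~\ref{CJ16Lem4.4} maps the circle $\Gamma_{\rho_j}$ onto the ellipse $\gamma_{\rho_j}$, is available exactly when $\rho_j<1$, and turns $I_{\rho_1,\rho_2}(k,l)$ into $\tilde D_{\gamma_{\rho_1},\gamma_{\rho_2}}(k,l)$.

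The heart of the argument is an elementary symmetry of $I$. The function $\tilde c(u_1,u_2)=2(1+a^2)+a(u_1+1/u_1)(u_2+1/u_2)$ is invariant under $u_1\mapsto 1/u_1$ and under $u_2\mapsto 1/u_2$; hence each of these maps carries the finite zero set of $\tilde c$ on $\Gamma_{R_1}\times\Gamma_{R_2}$ onto the corresponding zero set on $\Gamma_{1/R_1}\times\Gamma_{R_2}$, resp.\ on $\Gamma_{R_1}\times\Gamma_{1/R_2}$, so the substitutions below leave the iterated integrals meaningful. Performing $u_1\mapsto 1/u_1$ in $I_{\rho_1,\rho_2}(k,l)$, the form $du_1/u_1$ changes sign, the anticlockwise circle $\Gamma_{\rho_1}$ is sent to the circle of radius $1/\rho_1$ traversed clockwise, these two sign changes cancel, $u_1^{l}$ becomes $u_1^{-l}$ and $\tilde c$ is unchanged; this gives $I_{\rho_1,\rho_2}(k,l)=I_{1/\rho_1,\rho_2}(k,-l)$, and symmetrically $I_{\rho_1,\rho_2}(k,l)=I_{\rho_1,1/\rho_2}(-k,l)$. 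Applying this once for each $j$ with $R_j>1$, the two integrals in the display above become $I_{\tilde\rho_1,\tilde\rho_2}\bigl((1-2\delta_2)k_i,(1-2\delta_1)l_i\bigr)$, $i=1,2$, where $\tilde\rho_j:=R_j$ if $R_j<1$ and $\tilde\rho_j:=1/R_j$ if $R_j>1$; note that $\tilde\rho_1,\tilde\rho_2<1$ and $\gamma_{\tilde\rho_j}=\tilde\gamma_j$.

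It then remains to repeat the computation proving Lemma~\ref{CJ16Lem4.4} — the substitution $u_j=G(\omega_j)$, its Jacobian, and the identity for $\tilde c(G(\omega_1),G(\omega_2))$ — but starting from these last integrals, which have radii $\tilde\rho_1,\tilde\rho_2<1$ and exponents $(1-2\delta_2)k_i$ on $u_2$ and $(1-2\delta_1)l_i$ on $u_1$; this produces $\tilde D_{\gamma_{\tilde\rho_1},\gamma_{\tilde\rho_2}}\bigl((1-2\delta_2)k_i,(1-2\delta_1)l_i\bigr)=\tilde D_{\tilde\gamma_1,\tilde\gamma_2}\bigl((1-2\delta_2)k_i,(1-2\delta_1)l_i\bigr)$, and reassembling the two terms with the weights $a^{\e_2},a^{1-\e_2}$ and the prefactor $-\mathrm i^{1+h(\e_1,\e_2)}$ gives exactly the asserted formula. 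The delicate points are purely of bookkeeping nature: one must check that the exponents produced in the first paragraph are indeed $l_i$ on $u_1$ and $k_i$ on $u_2$ (a short verification involving the $\pm h(\e_1,\e_2)$ shifts), and that the powers of $\mathrm i$ and the branch choices of $\sqrt{\cdot^{2}+2c}$ in the definition \eqref{eq:2contorni} of $\tilde D$ — all introduced by the step $u_j=G(\omega_j)$, the preceding inversion contributing none — remain correct once $k_i,l_i$ are sign-flipped. I expect this last phase-and-sign check to be the main, though routine, obstacle; the rest is immediate from the inversion-invariance of $\tilde c$ and from Lemmas~\ref{CJ16Lem4.3} and~\ref{CJ16Lem4.4}.
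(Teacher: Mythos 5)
Your proposal is correct and follows essentially the same route as the paper, whose proof is precisely the observation that the change of variables $u_1\mapsto u_1^{-1}$ and/or $u_2\mapsto u_2^{-1}$ (according to which $R_i>1$) in the integral formula of Lemma~\ref{CJ16Lem4.3}, combined with the same $u=G(\omega)$ substitution used for Lemma~\ref{CJ16Lem4.4}, yields the stated identity; your extra bookkeeping (matching exponents with $k_i,l_i$, inversion-invariance of $\tilde c$, orientation/sign cancellation) just makes explicit what the paper calls ``immediate''. The phase check you flag is automatic once one notes that the term-by-term identity between the $u$-integrals and $\tilde D_{\gamma,\gamma'}(k,l)$ holds for arbitrary integer arguments, hence in particular for the sign-flipped ones.
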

\begin{proof}
This is immediate from making the change of variables $u_1 \mapsto u_1^{-1}$ or $u_2 \mapsto u_2^{-1}$ depending on whether $R_1>1$ or $R_2>1$ (or both) in the formula for $\mathbb{K}^{-1}_{r_1,r_2}$ given in Lemma~\ref{CJ16Lem4.3}. 

\end{proof}

\begin{Remark}
  Expanding the formulas for the slopes given
  in~\eqref{eq:rho1},~\eqref{eq:rho2} in terms of their integral
  formulas given in Lemma~\ref{CJ16Lem4.3}, shows that $R_1=1$ or
  $R_2=1$ is equivalent to $|\rho_1|=|\rho_2|$; see
  Lemma~\ref{lem:deltasandrhos} below. For the remaining computations
  for the speed, it is sufficient to consider $R_1<1$ and $R_2<1$
  (recall that we already computed the speed for $|\rho_1|=|\rho_2|$
  at the beginning of this section) and use symmetry.

\end{Remark}

\subsection{Computation of the speed of growth}

We first express the speed as a function of $\omega_c$, $\delta_1$ and
$\delta_2$, which are themselves functions of $R_1,R_2$ and therefore
of $r_1,r_2$, see Lemma \ref{lem:speed}.  Then, we find a formula
(Lemma \ref{lem:slopes}) for the slope $\rho$ also in terms of
$\omega_c$, $\delta_1$ and $\delta_2$.  Finally, in Lemma
\ref{lem:speed2} we put together the two results to obtain the speed
as function of the slope and formula \eqref{eq:v}.

\begin{Lemma}\label{lem:speed} For $r\in\mathcal B$, let $v=v(r)$ be defined as
  \begin{eqnarray}
    \label{eq:nu}
   v:= v(r)=\pi_{\rho(r)}(e_1\in\eta)-\pi_{\rho(r)}(e_2\in\eta)
  \end{eqnarray}
  (compare with Eqs. \eqref{eq:vrho} and \eqref{eq:c1c2}).  For
  $R_1,R_2, \delta_1,$ and $\delta_2$ as given in
  Lemma~\ref{lem:allslopes} with $R_1 , R_2 \not = 1$, we have 
\begin{equation}
v= \frac{(-1)^{\delta_1}}{\pi} ( \arg G(\omega_c) -\pi ) - \frac{(-1)^{\delta_1+\delta_2}}{\pi} \arg \omega_c + {\bf 1}_{(\delta_1,\delta_2)=(0,0)} -{\bf 1}_{(\delta_1,\delta_2)=(1,0)}.
\end{equation}
\end{Lemma}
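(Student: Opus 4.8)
The plan is to compute each of the two probabilities $\pi_{\rho(r)}(e_i\in\eta)$, $i=1,2$, from the $n=1$ case of the determinantal formula \eqref{eq:statistics}, namely $\pi_{\rho(r)}(e\in\eta)=\mathbb K(y,x)\,\mathbb K^{-1}_{r_1,r_2}(x,y)$ with $x\in\mathtt B$, $y\in\mathtt W$ the endpoints of the edge $e$, and then to subtract the two. First I would locate $e_1$ (the horizontal edge with an ``$a$'' face above) and $e_2$ (the vertical edge with an ``$a$'' face to its left) inside the fundamental domain in the rotated coordinates of Section \ref{sec:Kmatr}, reading off for each of them the sublattice labels $\e_1,\e_2$, the quantity $h(\e_1,\e_2)\in\{0,1\}$, the integers $k_1,k_2,l_1,l_2$ (for these two edges they are all in $\{-1,0\}$ up to the overall shift), and the Kasteleyn prefactor $\mathbb K(y,x)$, which is one of $a,\mathrm ia,1,\mathrm i$. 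It is enough to treat $R_1,R_2<1$ in detail; the regimes where one or both $R_i$ exceed $1$ reduce to this one via Lemma \ref{lem:allslopes}, whose sign reversals $(k,l)\mapsto((1-2\delta_2)k,(1-2\delta_1)l)$ are exactly what will produce the prefactors $(-1)^{\delta_1}$ and $(-1)^{\delta_1+\delta_2}$ in the claimed formula.

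Next I would insert the integral representation of $\mathbb K^{-1}_{r_1,r_2}$. For $R_1,R_2<1$ this is Lemma \ref{CJ16Lem4.4}, namely $\mathbb K^{-1}_{r_1,r_2}(x,y)=D_{\gamma_{R_1},\gamma_{R_2}}(x,y)=-\mathrm i^{1+h(\e_1,\e_2)}\bigl(a^{\e_2}\tilde D_{\gamma_{R_1},\gamma_{R_2}}(k_1,l_1)+a^{1-\e_2}\tilde D_{\gamma_{R_1},\gamma_{R_2}}(k_2,l_2)\bigr)$. Then I would invoke Lemma \ref{lem:existsomega} to replace each $\tilde D_{\gamma_{R_1},\gamma_{R_2}}(k,l)$ by a one-dimensional integral of $G(\omega^{-1})^l G(\omega)^k/(\omega\sqrt{\omega^2+2c}\sqrt{\omega^{-2}+2c})$ along the path from $\overline{\omega_c}$ to $\omega_c$ (plus its image under $\omega\mapsto-\omega$), plus the explicit correction $\tilde S(k,l)$. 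The only nonzero instance of $\tilde S$ that can arise from our edges is $\tilde S(-1,-1)=a^{-1}$; combined with the Kasteleyn weight $\mathbb K(y,x)$ of the corresponding dimer and the global factor $1/(1+a^2)$, this is the source of the two constant terms ${\bf 1}_{(\delta_1,\delta_2)=(0,0)}$ and $-{\bf 1}_{(\delta_1,\delta_2)=(1,0)}$, which cancel out for the other two parities of $(\delta_1,\delta_2)$.

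The heart of the computation is the evaluation of the one-dimensional integrals over $\omega$. Using $\frac{d}{d\omega}\log G(\omega)=-1/\sqrt{\omega^2+2c}$, $\frac{d}{d\omega}\log G(\omega^{-1})=1/(\omega^2\sqrt{\omega^{-2}+2c})$ and $\frac{d}{d\omega}\log\omega=1/\omega$, one checks that once the combination $a^{\e_2}\tilde D(k_1,l_1)+a^{1-\e_2}\tilde D(k_2,l_2)$ entering $\mathbb K^{-1}$ has been formed — which makes the elliptic-type contributions cancel — the remaining integrands are $\mathbb Z$-linear combinations of these three logarithmic derivatives. Since the integration path runs from $\overline{\omega_c}$ to its complex conjugate $\omega_c$, each such integral equals $2\mathrm i$ times the imaginary part of the relevant logarithm at $\omega_c$, i.e. a multiple of $\arg G(\omega_c)$, $\arg G(\omega_c^{-1})$ or $\arg\omega_c$, with the branch pinned down by Remarks \ref{rem:zeta} and \ref{rem:arg} (in particular $\arg G(\omega_c)\in(\pi/2,\pi)$ and $\arg G(\omega_c^{-1})\in(\pi,3\pi/2)$). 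In the difference $\pi_{\rho(r)}(e_1\in\eta)-\pi_{\rho(r)}(e_2\in\eta)$ the $\arg G(\omega_c^{-1})$ contributions cancel, and assembling the remaining pieces yields exactly the stated identity.

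The main difficulty is bookkeeping rather than any hard analytic estimate: identifying the two edges and their $(k_i,l_i)$ data correctly; verifying that the elliptic-integral contributions really do cancel in the combinations that occur, so that all surviving antiderivatives are elementary; and keeping track of the powers of $\mathrm i$, the powers of $a$, the four cases $(\delta_1,\delta_2)\in\{0,1\}^2$, and the branch of $\arg$, so that every sign and constant lands exactly as in the statement.
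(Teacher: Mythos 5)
Your overall route is the same as the paper's: write $v=c_1(\rho)-c_2(\rho)$ via the $n=1$ case of \eqref{eq:statistics}, insert the representation of $\mathbb{K}^{-1}_{r_1,r_2}$ from Lemmas~\ref{lem:allslopes} and \ref{lem:existsomega}, pick up the constants from $\tilde S$, and evaluate the remaining one-dimensional integrals through the antiderivatives $\log G(\omega)$, $\log G(\omega^{-1})$, $\log\omega$, with the branches fixed by Remarks~\ref{rem:zeta} and \ref{rem:arg}. However, one intermediate claim is wrong and, as stated, that step would fail for $a<1$: you assert that already the combination $a^{\e_2}\tilde D(k_1,l_1)+a^{1-\e_2}\tilde D(k_2,l_2)$ entering a \emph{single} matrix element $\mathbb K^{-1}(x,y)$ kills the ``elliptic-type'' part of the integrand. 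It does not. For the edge $e_1$ (say in the case $\delta_1=\delta_2=0$) the relevant combination is $\tilde D(-1,-1)+a\,\tilde D(0,0)$; expanding $G(\omega)^{-1}G(\omega^{-1})^{-1}$ one finds that the non-elementary integrand $\bigl(\omega\sqrt{\omega^2+2c}\,\sqrt{\omega^{-2}+2c}\bigr)^{-1}$ survives with total coefficient proportional to $a-\tfrac1{2c}=\tfrac{a^2-1}{2a}$, which vanishes only at $a=1$. Hence a single edge probability $c_i(\rho)$ is \emph{not} an elementary combination of $\arg G(\omega_c)$, $\arg G(\omega_c^{-1})$ and $\arg\omega_c$; the cancellation of the $\tilde D(0,0)$ terms, of the leftover elliptic pieces, and of the $\bigl(\omega^2\sqrt{\omega^{-2}+2c}\bigr)^{-1}$ pieces (the source of $\arg G(\omega_c^{-1})$) happens only in the \emph{difference} of the two edge probabilities. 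This is precisely how the paper organizes the computation: it subtracts at the level of the integrands first, and then uses \eqref{eq:G-1} together with $G(\omega^{-1})+G(\omega^{-1})^{-1}=-\sqrt{2/c}\,\sqrt{\omega^{-2}+2c}$ to reduce everything to $\tfrac{d}{d\omega}\log G(\omega)$ and $\tfrac{d}{d\omega}\log\omega$. If you reorder your plan so that the subtraction is performed before you attempt to integrate, the rest goes through as you describe.

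A second, minor slip: the value $\tilde S(-1,-1)=a^{-1}$ already incorporates the normalization $1/(2(1+a^2))$ appearing in \eqref{eq:2contorni} (it is the residue at infinity of the full double integral), so the constants $\mathbf 1_{(\delta_1,\delta_2)=(0,0)}-\mathbf 1_{(\delta_1,\delta_2)=(1,0)}$ arise simply as $a\cdot a^{-1}=1$ from the Kasteleyn prefactor; there is no additional factor $1/(1+a^2)$ to account for.
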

Here we have defined $v$ to be a function of $r$
whereas in \eqref{eq:vrho} we have defined it as a function of $\rho$. It turns
out that this is equivalent because  the correspondence
$r\in\mathcal B\leftrightarrow \rho(r)\in\mathcal L$ is a
bijection~\cite{KOS03}, as already mentioned.
\begin{proof}
  From \eqref{eq:statistics} and recalling that vertices $w_0,b_0,w_1$ are in the same fundamental domain,
we have
\begin{equation}
v =  a\mathrm{i} \mathbb{K}_{r_1,r_2}^{-1} ((1,0),(0,1)) - a \mathbb{K}^{-1}_{r_1,r_2}((1,0),(2,1)).
\end{equation}
We apply the formula in Lemma~\ref{lem:allslopes} which gives 
\begin{equation}
\begin{split}
&v= a \mathrm{i}(-\mathrm{i}) ( \tilde{D}_{\tilde{\gamma}_1, \tilde{\gamma}_2}(2\delta_2-1,2\delta_1-1)+a  \tilde{D}_{\tilde{\gamma}_1, \tilde{\gamma}_2}(0,0) )\\
&\,\,\,	-a(a\tilde{D}_{\tilde{\gamma}_1, \tilde{\gamma}_2}(0,0)+\tilde{D}_{\tilde{\gamma}_1, \tilde{\gamma}_2}(2\delta_2-1,1-2\delta_1))\\
&= a [\tilde{D}_{\tilde{\gamma}_1, \tilde{\gamma}_2}(2\delta_2-1,2\delta_1-1)- \tilde{D}_{\tilde{\gamma}_1, \tilde{\gamma}_2}(2\delta_2-1,1-2\delta_1)] \\
&= a (\tilde{S}(2\delta_2-1,2\delta_1-1)- \tilde{S}(2\delta_2-1,1-2\delta_1))+
\frac{c}{4\pi \mathrm{i}} \left( \int_{\overline{\omega}_c}^{\omega_c} + \int_{- \overline{\omega}_c}^{-\omega_c} \right) \frac{d\omega}{\omega} \\
&\times \frac{\mathrm{i}^{(1-2\delta_2)+(1-2\delta_1)} G(\omega^{-1})^{-1+2\delta_1} G(\omega)^{-1+2 \delta_2} -\mathrm{i}^{1-2 \delta_2-(1-2 \delta_1)} G(\omega^{-1})^{1-2\delta_1} G(\omega)^{-1+2\delta_2} }{\sqrt{\omega^2+2c} \sqrt{\omega^{-2}+2c}} \\
\end{split}
\end{equation}
where we have used the formula from Lemma~\ref{lem:existsomega}.   The formulas for $\tilde{S}$ can be evaluated as given in Lemma~\ref{lem:existsomega}.  Using 
\begin{eqnarray}
  \label{eq:G-1}
  \frac1{G(\omega)}=-\frac1{\sqrt{2c}}(\omega+\sqrt{\omega^2+2c})  
\end{eqnarray}
together with 
	$G(\omega^{-1})+G(\omega^{-1})^{-1} = -\sqrt{2/c} \sqrt{\omega^{-2}+2c}$, we can simplify the integrand above and we obtain 
\begin{equation}
\begin{split}
v&=  {\bf 1}_{(\delta_1,\delta_2)=(0,0)}-  {\bf 1}_{(\delta_1,\delta_2)=(1,0)}+ \frac{c}{4\pi \mathrm{i}} \left( \int_{\overline{\omega}_c}^{\omega_c} + \int_{- \overline{\omega}_c}^{-\omega_c} \right) \frac{d\omega}{\omega} 
\sqrt{\frac{2}{c}} \frac{ G(\omega)^{-1+2\delta_2} (-1)^{\delta_1+\delta_2}}{\sqrt{\omega^2+2c}}  \\
&= {\bf 1}_{(\delta_1,\delta_2)=(0,0)}- {\bf 1}_{(\delta_1,\delta_2)=(1,0)}+ \frac{c}{2\pi \mathrm{i}}  \int_{\overline{\omega}_c}^{\omega_c}  \frac{d\omega}{\omega}  
  \sqrt{\frac{2}{c}} \frac{ G(\omega)^{-1+2\delta_2} (-1)^{\delta_1+\delta_2}}{\sqrt{\omega^2+2c}}  
\label{similar}
\end{split}
\end{equation}
where we used \eqref{eq:G-G}. We can now evaluate the above integral by noting that $\frac{d}{d \omega} \log G(\omega)= -1/\sqrt{\omega^2+2c}$ and \eqref{eq:G-1}, which implies that 
\begin{equation}
\begin{split}
v= \frac{1}{2\pi \mathrm{i}} \left[ (-1)^{\delta_1} \log G(\omega) +(-1)^{\delta_1+\delta_2+1} \log \omega \right]_{\omega=\overline{\omega}_c}^{\omega=\omega_c} +{\bf 1}_{(\delta_1,\delta_2)=(0,0)}- {\bf 1}_{(\delta_1,\delta_2)=(1,0)}.
\end{split}
\end{equation}
By using the fact that $\arg \omega_c \in (0,\pi/2)$ and
$\arg G(\omega_c) \in (\pi/2, \pi)$, see Remark \ref{rem:arg}, and the convention $\arg(\cdot)\in(-\pi/2,3\pi/2]$ we evaluate the above equation
which gives the result.
\end{proof}

The formulas for the two components of the slope $\rho_1$ and $\rho_2$ are functions of the magnetic
coordinates. Using the height function convention, translation
invariance and~\eqref{eq:statistics}, we find that
\begin{equation}\label{eq:rho1}
\begin{split}
\rho_1(r_1,r_2) &=
 a  \left( \mathbb{K}^{-1}_{r_1,r_2} ((1,2),(0,1)) -  \mathbb{K}^{-1}_{r_1,r_2} ((1,0),(2,1)) \right).
\end{split}
\end{equation}
which can be seen by using Fig.~\ref{fig:fundamental}.
Similarly, 
\begin{equation}\label{eq:rho2}
\begin{split}
 \rho_2(r_1,r_2) &=
 a \mathrm{i} \left( \mathbb{K}^{-1}_{r_1,r_2} ((1,2),(2,1)) -  \mathbb{K}^{-1}_{r_1,r_2} ((1,0),(0,1)) \right).
\end{split}
\end{equation}
 We have the following formulas for the slopes $\rho_1, \rho_2$. 

\begin{Lemma}\label{lem:slopes}
For $R_1,R_2, \delta_1,$ and $\delta_2$ as  in Lemma~\ref{lem:allslopes} with $R_1 , R_2 \not = 1$, we have
\begin{equation}
\label{rho1}
\rho_1 = \frac{ (-1)^{\delta_1}}{\pi} \left( \arg G(\omega_c)-\pi \right) + \frac{ (-1)^{\delta_2}}{\pi} \left( \arg G(\omega_c^{-1})-\pi \right)+{\bf 1}_{(\delta_1,\delta_2)=(0,1)} -{\bf 1}_{(\delta_1,\delta_2)=(1,0)}
\end{equation}
and 
\begin{equation}
\label{rho2}
\rho_2 = -\frac{ (-1)^{\delta_1}}{\pi} \left( \arg G(\omega_c)-\pi \right) + \frac{ (-1)^{\delta_2}}{\pi} \left( \arg G(\omega_c^{-1})-\pi \right)+{\bf 1}_{(\delta_1,\delta_2)=(1,1)} -{\bf 1}_{(\delta_1,\delta_2)=(0,0)}.
\end{equation}
\end{Lemma}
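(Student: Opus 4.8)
We would establish Lemma \ref{lem:slopes} by repeating, almost verbatim, the computation carried out in the proof of Lemma \ref{lem:speed}. By \eqref{eq:rho1} and \eqref{eq:rho2}, $\rho_1$ and $\rho_2$ are linear combinations --- with coefficients among $a$ and $a\mathrm{i}$ --- of single entries $\mathbb K^{-1}_{r_1,r_2}(x,y)$ for pairs $x,y$ of vertices lying in one fundamental domain, i.e. of exactly the kind already treated for the speed. So the plan is: (i) for each such entry read off the classes $\e_1,\e_2\in\{0,1\}$ of $x,y$ and the associated integers $k_i,l_i$ (all in $\{-1,0,1\}$, since the vertices are mutually close), and invoke Lemma \ref{lem:allslopes} to rewrite the entry as a combination of functions $\tilde D_{\tilde\gamma_1,\tilde\gamma_2}$ whose integer arguments are affine in $\delta_1,\delta_2$ through the factors $(1-2\delta_i)$; (ii) apply Lemma \ref{lem:existsomega} (whose conclusion carries over verbatim to the contours $\tilde\gamma_1,\tilde\gamma_2$ of Lemma \ref{lem:allslopes}) to split every $\tilde D$ into the residue contribution --- the integral from $\overline{\omega}_c$ to $\omega_c$ together with its reflection through the origin --- plus the remainder $\tilde S$; here $\omega_c$, and therefore $\arg G(\omega_c)$ and $\arg G(\omega_c^{-1})$, is the very same object as in Lemma \ref{lem:speed}, depending on $r$ (equivalently, on $\rho$) only; (iii) evaluate both contributions explicitly.

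For the residue part, collapse the two symmetric half-contour integrals into twice the integral from $\overline{\omega}_c$ to $\omega_c$, exactly as in \eqref{similar}, using \eqref{eq:G-G}. Then simplify the integrand by means of \eqref{eq:G-1} and the identity $G(\omega^{-1})+G(\omega^{-1})^{-1}=-\sqrt{2/c}\,\sqrt{\omega^{-2}+2c}$ already used in Lemma \ref{lem:speed}, so that for each $(\delta_1,\delta_2)$ it becomes an exact differential: a combination of $\tfrac{d}{d\omega}\log G(\omega)=-1/\sqrt{\omega^{2}+2c}$, of $\tfrac{d}{d\omega}\log G(\omega^{-1})=1/(\omega^{2}\sqrt{\omega^{-2}+2c})$ and of $\tfrac{d}{d\omega}\log\omega=1/\omega$. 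A point that has to be checked by hand is that, contrary to what happens for the speed, the $\log\omega$ contributions produced by the two $\mathbb K^{-1}$ entries making up $\rho_1$ (resp.\ $\rho_2$) cancel; this is precisely why the term $\arg\omega_c$, present in Lemma \ref{lem:speed}, is absent from \eqref{rho1}--\eqref{rho2}. What is left is a $\delta$-dependent combination of $\tfrac{1}{2\pi\mathrm{i}}\big[\log G(\omega)\big]_{\overline{\omega}_c}^{\omega_c}$ and $\tfrac{1}{2\pi\mathrm{i}}\big[\log G(\omega^{-1})\big]_{\overline{\omega}_c}^{\omega_c}$. Using the conjugation symmetry $G(\overline\omega)=\overline{G(\omega)}$ (valid for $\Re\omega>0$, the branch cut of $\sqrt{\cdot^{2}+2c}$ being purely imaginary), the localizations $\arg G(\omega_c)\in(\pi/2,\pi)$ and $\arg G(\omega_c^{-1})\in(\pi,3\pi/2)$ from Remark \ref{rem:arg}, and the convention $\arg(\cdot)\in(-\pi/2,3\pi/2]$, one obtains $\tfrac{1}{2\pi\mathrm{i}}\big[\log G(\omega)\big]_{\overline{\omega}_c}^{\omega_c}=\tfrac1\pi(\arg G(\omega_c)-\pi)$ and $\tfrac{1}{2\pi\mathrm{i}}\big[\log G(\omega^{-1})\big]_{\overline{\omega}_c}^{\omega_c}=\tfrac1\pi(\arg G(\omega_c^{-1})-\pi)$, which are exactly the quantities appearing in \eqref{rho1}--\eqref{rho2}.

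For the $\tilde S$ remainders, recall from Lemma \ref{lem:existsomega} that $\tilde S(k,l)=0$ as soon as $k\ge0$ or $l\ge0$, that $\tilde S(-1,-1)=a^{-1}$, and that in the remaining case the double integral vanishes by pushing one contour through infinity (since $G(\omega)\sim\sqrt{c}/(\sqrt2\,\omega)$ at infinity). Plugging in the arguments $(1-2\delta_2)k_i$, $(1-2\delta_1)l_i$ for each of the four values of $(\delta_1,\delta_2)$ and summing the $a\,\tilde S$ contributions with the signs dictated by \eqref{eq:rho1}--\eqref{eq:rho2} produces exactly the indicator terms ${\bf 1}_{(\delta_1,\delta_2)=(0,1)}-{\bf 1}_{(\delta_1,\delta_2)=(1,0)}$ in \eqref{rho1} and ${\bf 1}_{(\delta_1,\delta_2)=(1,1)}-{\bf 1}_{(\delta_1,\delta_2)=(0,0)}$ in \eqref{rho2}. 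The main obstacle is entirely bookkeeping: correctly pinning down the classes and the integers $k_i,l_i$ for each of the four entries, carrying the $(1-2\delta_i)$ sign flips through Lemma \ref{lem:allslopes}, and checking the four $(\delta_1,\delta_2)$-cases so that the cancellation of the $\log\omega$ terms and the $\tilde S$-contributions combine into precisely the stated formulas.
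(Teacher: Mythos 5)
Your proposal follows essentially the same route as the paper's proof: start from \eqref{eq:rho1}--\eqref{eq:rho2}, rewrite each $\mathbb K^{-1}$ entry via Lemma \ref{lem:allslopes}, split each $\tilde D$ with Lemma \ref{lem:existsomega}, observe the cancellation of the $1/\omega$ (i.e.\ $\arg\omega_c$) contributions, evaluate the remaining exact differentials of $\log G(\omega)$ and $\log G(\omega^{-1})$ using the argument localizations, and collect the $\tilde S$ terms into the indicator functions. The key points you flag (the $\log\omega$ cancellation and the $\tilde S$ bookkeeping over the four $(\delta_1,\delta_2)$ cases) are exactly what the paper's computation checks, so the proposal is correct and matches the paper's argument.
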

\begin{proof}
For $\rho_1$, we have from~\eqref{eq:rho1} and Lemma~\ref{lem:allslopes} that 
\begin{equation}
\begin{split}
\rho_1 &= a    [a \tilde{D}_{\tilde{\gamma}_1,\tilde{\gamma}_2} (0,0)+ \tilde{D}_{\tilde{\gamma}_1,\tilde{\gamma}_2} (1-2\delta_2,2 \delta_1-1) ]\\& - a [ \tilde{D}_{\tilde{\gamma}_1,\tilde{\gamma}_2} (2\delta_2-1,1-2\delta_1)+ a\tilde{D}_{\tilde{\gamma}_1,\tilde{\gamma}_2} (0,0) ]  \\
&=a [ \tilde{D}_{\tilde{\gamma}_1,\tilde{\gamma}_2} (1-2\delta_2,2 \delta_1-1)-\tilde{D}_{\tilde{\gamma}_1,\tilde{\gamma}_2} (2\delta_2-1,1-2\delta_1) ].
\end{split}
\end{equation}
We apply the integral formula given in Lemma~\ref{lem:existsomega} which gives
\begin{equation}
\begin{split}
\rho_1 & = {\bf 1}_{(\delta_1,\delta_2)=(0,1)} - {\bf 1}_{(\delta_1,\delta_2)=(1,0)} + \frac{c}{4\pi \mathrm{i}} \left( \int_{\overline{\omega}_c}^{\omega_c} + \int_{-\overline{\omega}_c}^{-\omega_c} \right) \frac{d \omega}{\omega} \\
& \times (-1)^{\delta_1+\delta_2} \frac{  G(\omega^{-1})^{2 \delta_1-1} G(\omega)^{1-2\delta_2} - G(\omega^{-1})^{1-2\delta_1} G(\omega)^{2\delta_2-1}}{\sqrt{\omega^2+2c} \sqrt{\omega^{-2}+2c}} .
\end{split}
\end{equation}
Expanding out the above integrand and using the formula for $G(\omega)$ 
 gives
\begin{equation}
\begin{split}
\rho_1 & = {\bf 1}_{(\delta_1,\delta_2)=(0,1)} - {\bf 1}_{(\delta_1,\delta_2)=(1,0)}\\ & + \frac{1}{4\pi \mathrm{i}}  \left( \int_{\overline{\omega}_c}^{\omega_c} + \int_{-\overline{\omega}_c}^{-\omega_c} \right) {d \omega} 
\left[ \frac{(-1)^{\delta_2}}{\omega^2\sqrt{\omega^{-2}+2c}} -\frac{(-1)^{\delta_1}}{\sqrt{\omega^{2}+2c}}\right] .
\end{split}
\end{equation}
The above integrals can be computed  similarly to \eqref{similar}, with the result \eqref{rho1}.

For $\rho_2$, we have from~\eqref{eq:rho2} and Lemma~\ref{lem:allslopes} that 
\begin{equation}
\begin{split}
\rho_2 &= a \mathrm{i}  (-\mathrm{i}) (a \tilde{D}_{\tilde{\gamma}_1,\tilde{\gamma}_2} (0,0)+ \tilde{D}_{\tilde{\gamma}_1,\tilde{\gamma}_2} (1-2\delta_2,1-2 \delta_1) )\\
& -a \mathrm{i} (-\mathrm{i})  ( \tilde{D}_{\tilde{\gamma}_1,\tilde{\gamma}_2} (2\delta_2-1,2\delta_1-1)+ a\tilde{D}_{\tilde{\gamma}_1,\tilde{\gamma}_2} (0,0) )   \\
&=a ( \tilde{D}_{\tilde{\gamma}_1,\tilde{\gamma}_2} (1-2\delta_2,1-2 \delta_1)-\tilde{D}_{\tilde{\gamma}_1,\tilde{\gamma}_2} (2\delta_2-1,2\delta_1-1) ).
\end{split}
\end{equation}
We apply the integral formula given in Lemma~\ref{lem:existsomega} which gives
\begin{equation}
\begin{split}
  \rho_2 & = {\bf 1}_{(\delta_1,\delta_2)=(1,1)} - {\bf 1}_{(\delta_1,\delta_2)=(0,0)} + \frac{c}{4\pi \mathrm{i}} \left( \int_{\overline{\omega}_c}^{\omega_c} + \int_{-\overline{\omega}_c}^{-\omega_c} \right) \frac{d \omega}{\omega} \\
  & \times \frac{ \mathrm{i}^{2\delta_2+2\delta_1+2}
    G(\omega^{-1})^{1-2 \delta_1} G(\omega)^{1-2\delta_2} -
    \mathrm{i}^{2-2\delta_1-2 \delta_2}
    G(\omega^{-1})^{-(1-2\delta_1)}
    G(\omega)^{-(1-2\delta_2)}}{\sqrt{\omega^2+2c}
    \sqrt{\omega^{-2}+2c}} .
\end{split}
\end{equation}
Expanding out the above integrand and using the formula for $G(\omega)$ gives
\begin{equation}
\begin{split}
\rho_2 & = {\bf 1}_{(\delta_1,\delta_2)=(1,1)} - {\bf 1}_{(\delta_1,\delta_2)=(0,0)}\\& + \frac{1}{4\pi \mathrm{i}}  \left( \int_{\overline{\omega}_c}^{\omega_c} + \int_{-\overline{\omega}_c}^{-\omega_c} \right) {d \omega}  \left[\frac{(-1)^{\delta_1}}{\sqrt{\omega^2+2c}} +\frac{(-1)^{\delta_2}}{\omega^2\sqrt{\omega^{-2}+2c}}\right] .
\end{split}
\end{equation}
Computing the integrals, one gets \eqref{rho2}.
\end{proof}

It follows from Lemma \ref{lem:slopes} that
\begin{equation}
\label{eq:G+}
\rho_1+\rho_2=\frac2\pi(-1)^{\delta_2}\arg G(\omega_c^{-1})-3(-1)^{\delta_2}
\end{equation}
and 
\begin{equation}
\label{eq:G-}
\rho_1-\rho_2=\frac2\pi(-1)^{\delta_1}\arg G(\omega_c)-(-1)^{\delta_1}.
\end{equation}
From these equations and Remark \ref{rem:arg} we see that
\begin{Lemma} \label{lem:deltasandrhos}
Let $r\in\mathcal B$ with $R_1,R_2\ne 1$, $\rho=\rho(r)$ and $\delta_1,\delta_2$ be defined as in \eqref{deltai}. Recall that $\rho\in\mathcal L$, with $\mathcal L$ defined through \eqref{eq:calL}.
We have
\begin{equation}
  \begin{array}{llr}
\label{eq:deltasandrhos}
\delta_1=0 &\mbox{ if and only if } & 0<\rho_1-\rho_2<1,\\
 \delta_1=1 &\mbox{ if and only if } &-1<\rho_1-\rho_2<0,\\
\delta_2=0  &\mbox{ if and only if } &-1<\rho_1+\rho_2<0\\
\delta_2=1  &\mbox{ if and only if } &0<\rho_1+\rho_2<1,
  \end{array}
\end{equation}	
while $|\rho_1|=|\rho_2| $ correspond to having either $R_1=1$ or $R_2=1$.  
\end{Lemma}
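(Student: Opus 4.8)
The plan is to read the four equivalences in \eqref{eq:deltasandrhos} straight off the identities \eqref{eq:G+}--\eqref{eq:G-}, and to obtain the last assertion from the reflection symmetries used at the start of Section~\ref{sec:compv}. By \eqref{eq:G-}, $\rho_1-\rho_2=(-1)^{\delta_1}\bigl(\tfrac2\pi\arg G(\omega_c)-1\bigr)$, and Remark~\ref{rem:arg} gives $\arg G(\omega_c)\in(\pi/2,\pi)$, so $\tfrac2\pi\arg G(\omega_c)-1$ lies strictly in $(0,1)$; hence $\delta_1=0$ forces $\rho_1-\rho_2\in(0,1)$ and $\delta_1=1$ forces $\rho_1-\rho_2\in(-1,0)$. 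Since these target intervals are disjoint and $\delta_1\in\{0,1\}$, each implication is in fact an equivalence: if $0<\rho_1-\rho_2<1$ then $\delta_1$ cannot equal $1$, hence $\delta_1=0$, and symmetrically for the other sign. The identical computation applied to \eqref{eq:G+}, with $\rho_1+\rho_2=(-1)^{\delta_2}\bigl(\tfrac2\pi\arg G(\omega_c^{-1})-3\bigr)$ and $\arg G(\omega_c^{-1})\in(\pi,3\pi/2)$ so that $\tfrac2\pi\arg G(\omega_c^{-1})-3\in(-1,0)$, yields $\delta_2=0\iff-1<\rho_1+\rho_2<0$ and $\delta_2=1\iff 0<\rho_1+\rho_2<1$. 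This proves the first part of \eqref{eq:deltasandrhos}.

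For the last assertion, the ``only if'' direction is immediate from what we have just shown: if $R_1\ne1$ and $R_2\ne1$ then $\rho_1-\rho_2$ and $\rho_1+\rho_2$ each lie in $(-1,0)\cup(0,1)$, so both are non-zero and $|\rho_1|\ne|\rho_2|$; contrapositively, $|\rho_1|=|\rho_2|$ forces $R_1=1$ or $R_2=1$. For the ``if'' direction I would use that the magnetic coordinates are in bijection with the Gibbs measures (equivalently, with the slopes), together with the reflections of Section~\ref{sec:compv}: the reflection across the anti-diagonal sends $\pi_{(\rho_1,\rho_2)}$ to $\pi_{(\rho_2,\rho_1)}$ and, interchanging the two winding directions, interchanges $r_1$ and $r_2$, so its fixed locus in magnetic coordinates is $\{r_1=r_2\}=\{R_1=1\}$, where the bijection then forces $(\rho_2,\rho_1)=(\rho_1,\rho_2)$, i.e. $\rho_1=\rho_2$; likewise the reflection across the main diagonal sends $\pi_{(\rho_1,\rho_2)}$ to $\pi_{(-\rho_2,-\rho_1)}$ and $(r_1,r_2)$ to $(1/r_2,1/r_1)$, with fixed locus $\{r_1r_2=1\}=\{R_2=1\}$, where it forces $\rho_1=-\rho_2$. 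In either case $|\rho_1|=|\rho_2|$. (Alternatively, this half follows by continuity of $r\mapsto\rho(r)$ on $\mathcal B$: on the open sets where $R_1<1$ and where $R_1>1$ one already knows $\rho_1-\rho_2>0$ and $\rho_1-\rho_2<0$ respectively, and since $R_1=\sqrt{r_1/r_2}$ has non-vanishing gradient every point of $\{R_1=1\}\cap\mathcal B$ is a limit from both sides, so continuity gives $\rho_1-\rho_2=0$ there; and similarly for $R_2=1$.)

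I expect the only mildly delicate point to be the book-keeping of the ``if'' half: checking that a reflection interchanging the two winding directions genuinely interchanges the magnetic coordinates $r_1,r_2$ (so that its fixed locus is exactly where two slope components coincide up to sign), and correctly matching the two reflections of Section~\ref{sec:compv} to $\{R_1=1\}$ and $\{R_2=1\}$ respectively. Everything else is an elementary substitution into \eqref{eq:G+}--\eqref{eq:G-} combined with the range information of Remark~\ref{rem:arg}, plus the observation that $\delta_i$ takes only the two values $0,1$.
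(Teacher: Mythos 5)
Your proof of the four equivalences is exactly the paper's argument: the authors simply state that the lemma follows from \eqref{eq:G+}, \eqref{eq:G-} and Remark \ref{rem:arg}, and your write-up (rewriting $\rho_1-\rho_2=(-1)^{\delta_1}\bigl(\tfrac2\pi\arg G(\omega_c)-1\bigr)$ with the bracket in $(0,1)$, and $\rho_1+\rho_2=(-1)^{\delta_2}\bigl(\tfrac2\pi\arg G(\omega_c^{-1})-3\bigr)$ with the bracket in $(-1,0)$, then upgrading implications to equivalences by disjointness) is precisely the omitted bookkeeping, done correctly. The only place you genuinely diverge is the converse half of the final claim ($R_1=1$ or $R_2=1$ implies $|\rho_1|=|\rho_2|$): the paper's intended route, indicated in the Remark following Lemma \ref{lem:allslopes}, is a direct computation, expanding \eqref{eq:rho1}--\eqref{eq:rho2} via the integral formula of Lemma \ref{CJ16Lem4.3} and using the symmetry of the integrand when $R_1=1$ or $R_2=1$; you instead propose either the reflection symmetries of Section \ref{sec:compv} or continuity of $r\mapsto\rho(r)$ on $\mathcal B$. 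Both of your alternatives are sound, but the symmetry version does require the convention check you flag (that the reflection swapping the slope components really induces $r_1\leftrightarrow r_2$, resp. $(r_1,r_2)\mapsto(1/r_2,1/r_1)$, on magnetic coordinates), whereas the continuity argument is immediate from the known regularity of the bijection $r\in\mathcal B\leftrightarrow\rho\in\mathcal L$ of \cite{KOS03} together with the already-proved strict sign of $\rho_1\mp\rho_2$ on either side of $\{R_i=1\}$; the paper's computational route avoids invoking either and also covers the case $R_1=R_2=1$ directly, but at the cost of an extra explicit integral manipulation.
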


Putting together Lemmas \ref{lem:speed} and \ref{lem:slopes} we obtain.
\begin{Lemma} \label{lem:speed2}
For $\rho\in\mathcal L$, $|\rho_1|\ne |\rho_2|$, the speed of growth is given by 
\begin{equation}
\label{viro}
v(\rho)= \frac{\rho_1-\rho_2}{2} +\frac{(-1)^{\delta_1+\delta_2}}{2} - \frac{(-1)^{\delta_1+\delta_2}}{\pi}\arg \omega_c
\end{equation}
where $\arg \omega_c\in(0,\pi/2)$ satisfies 
\begin{equation}
\begin{split}
\label{eq:eqoc}
&c\,(-1)^{\delta_1+\delta_2}\sin ( \pi (\rho_1+\rho_2))\sin ( \pi (\rho_1-\rho_2))\\
&=\left( \left(\cos ( \pi(\rho_1-\rho_2))+\cos (2\arg \omega_c) \right) \left(\cos ( \pi (\rho_1+\rho_2))+\cos (2\arg \omega_c) \right)\right)^{\frac{1}{2}}.
\end{split}
\end{equation}
\end{Lemma}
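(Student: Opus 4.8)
The plan is to combine Lemma~\ref{lem:speed} with the slope formulas \eqref{eq:G-}--\eqref{eq:G+} to eliminate $\arg G(\omega_c)$ in favour of $\rho_1-\rho_2$, and then to extract the equation for $\arg\omega_c$ from the defining property of $\omega_c$ as the point lying simultaneously on the ellipse $\gamma_{R_2}$ and on the inversion of $\gamma_{R_1}$. Everything rests on the already proved Lemmas~\ref{lem:speed}, \ref{lem:slopes} and the inversion formula \eqref{speed:cov}.

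\emph{Step 1 (the identity \eqref{viro}).} Rewrite \eqref{eq:G-} in the form $\frac{(-1)^{\delta_1}}{\pi}\bigl(\arg G(\omega_c)-\pi\bigr)=\frac{\rho_1-\rho_2}{2}-\frac{(-1)^{\delta_1}}{2}$ and substitute it into the expression for $v$ given by Lemma~\ref{lem:speed}; this yields
\[
v=\frac{\rho_1-\rho_2}{2}-\frac{(-1)^{\delta_1}}{2}-\frac{(-1)^{\delta_1+\delta_2}}{\pi}\arg\omega_c+{\bf 1}_{(\delta_1,\delta_2)=(0,0)}-{\bf 1}_{(\delta_1,\delta_2)=(1,0)}.
\]
A direct check over the four values of $(\delta_1,\delta_2)\in\{0,1\}^2$ shows that $-\frac{(-1)^{\delta_1}}{2}+{\bf 1}_{(\delta_1,\delta_2)=(0,0)}-{\bf 1}_{(\delta_1,\delta_2)=(1,0)}=\frac{(-1)^{\delta_1+\delta_2}}{2}$, and \eqref{viro} follows.

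\emph{Step 2 (the equation for $\arg\omega_c$).} By \eqref{speed:cov} the inverse of $u=G(\omega)$ on $\{|u|<1\}$ is $\omega=\sqrt{c/2}\,(u-u^{-1})$, so that $\omega_c=\sqrt{c/2}\,\bigl(G(\omega_c)-G(\omega_c)^{-1}\bigr)$ and $\omega_c^{-1}=\sqrt{c/2}\,\bigl(G(\omega_c^{-1})-G(\omega_c^{-1})^{-1}\bigr)$. Since $\omega_c\in\gamma_{R_2}$ and $\omega_c^{-1}\in\gamma_{R_1}$ (Claim~\ref{claim:4} and the proof of Lemma~\ref{lem:existsomega}), we have $|G(\omega_c)|=R_2$ and $|G(\omega_c^{-1})|=R_1$; moreover $\arg G(\omega_c)\in(\pi/2,\pi)$ and $\arg G(\omega_c^{-1})\in(\pi,3\pi/2)$ by Remark~\ref{rem:arg}, with the quantitative values $2\arg G(\omega_c)=(-1)^{\delta_1}\pi(\rho_1-\rho_2)+\pi$ and $2\arg G(\omega_c^{-1})=(-1)^{\delta_2}\pi(\rho_1+\rho_2)+3\pi$ coming from \eqref{eq:G-}--\eqref{eq:G+}. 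Writing $G(\omega_c)=R_2e^{\mathrm{i}X'}$ and $G(\omega_c^{-1})=R_1e^{\mathrm{i}Y'}$, taking the argument of the first relation expresses $(R_2-R_2^{-1})^2=4\sin^2X'\cos^2(\arg\omega_c)/(\sin^2(\arg\omega_c)-\sin^2X')$ as a function of $\arg\omega_c$ and $X'$, and, using $\arg(\omega_c^{-1})=-\arg\omega_c$, the second relation gives the analogous formula for $(R_1-R_1^{-1})^2$ in terms of $\arg\omega_c$ and $Y'$. Substituting these into $1=|\omega_c|^2|\omega_c^{-1}|^2=\tfrac{c^2}{4}|G(\omega_c)-G(\omega_c)^{-1}|^2\,|G(\omega_c^{-1})-G(\omega_c^{-1})^{-1}|^2$ and simplifying with $\sin^2A-\sin^2B=\tfrac12(\cos2B-\cos2A)$ one arrives at
\[
\tfrac{c^2}{4}\sin^2(2X')\sin^2(2Y')=\bigl(\sin^2(\arg\omega_c)-\sin^2X'\bigr)\bigl(\sin^2(\arg\omega_c)-\sin^2Y'\bigr).
\]
Finally, $\sin^2(2X')=\sin^2(\pi(\rho_1-\rho_2))$, $\cos(2X')=-\cos(\pi(\rho_1-\rho_2))$ (and similarly with $Y'$ and $\rho_1+\rho_2$), so after inserting these and taking square roots one obtains \eqref{eq:eqoc}; the sign is fixed by $\rho\in\mathcal L$ and Lemma~\ref{lem:deltasandrhos}, and the fact that $\arg\omega_c\in(0,\pi/2)$ with $R_1,R_2\ne1$ forces $\sin^2(\arg\omega_c)>\max(\sin^2X',\sin^2Y')$, which makes both factors on the right-hand side positive and selects the correct root $\Sigma$ among the two solutions of the resulting quadratic in $\cos(2\arg\omega_c)$.

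\emph{Main difficulty.} Neither step requires anything beyond elementary trigonometry once Lemmas~\ref{lem:speed}, \ref{lem:slopes} and \eqref{speed:cov} are available; the only genuine work is the bookkeeping of the four $(\delta_1,\delta_2)$ cases and of the branch conventions for $\arg G(\omega_c)$, $\arg G(\omega_c^{-1})$, $\arg\omega_c$ and the square root, which must be threaded through consistently to get the constant in \eqref{viro} and the sign in \eqref{eq:eqoc} exactly right.
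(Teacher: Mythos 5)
Your proposal is correct and follows essentially the same route as the paper: your Step 1 is exactly the paper's (immediate) combination of Lemmas~\ref{lem:speed} and~\ref{lem:slopes}, and your Step 2 eliminates the moduli from the same two relations $\sqrt{2/c}\,\omega_c=G(\omega_c)-G(\omega_c)^{-1}$ and $\sqrt{2/c}\,\omega_c^{-1}=G(\omega_c^{-1})-G(\omega_c^{-1})^{-1}$ that the paper uses, the only difference being that the paper extracts the constraint from the sine rule plus the real part of the product $\omega_c\cdot\omega_c^{-1}$, while you use the ratio of real and imaginary parts plus its modulus, so your displayed identity is precisely the square of the paper's. The final sign/branch bookkeeping you defer to Lemma~\ref{lem:deltasandrhos} is treated no more explicitly in the paper itself (which in any case only uses \eqref{eq:eqoc} after squaring it in the proof of Theorem~\ref{th:v}), and your observation that $\sin^2(\arg\omega_c)>\max(\sin^2 X',\sin^2 Y')$, forcing both factors under the square root to come from negative quantities, is a valid (and arguably cleaner) way to select the root $\Sigma$ than the paper's boundary-condition argument.
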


\begin{proof}
Equation \eqref{viro} follows immediately from Lemmas \ref{lem:slopes} and \ref{lem:speed},
so we only need to determine a formula for $\arg \omega_c$.  Let
$\bar X^{1/2}= |G(\omega_c)|$ and $\bar Y^{1/2}=|G(\omega_c^{-1})|$. Then, we
have the following
\begin{equation}\label{eq:XandG}
\sqrt{\frac{{2}}{c}} \omega_c = G(\omega_c) -G(\omega_c)^{-1} = \bar X^{1/2} e^{\mathrm{i} \arg G(\omega_c)} -\bar X^{-1/2} e^{-\mathrm{i} \arg G(\omega_c)}.
\end{equation}
This equation represents a triangle in the upper half plane since $\arg \omega_c \in (0,\pi/2)$,  $\arg G(\omega_c) \in (\pi/2,\pi)$ and $\arg(-e^{-\mathrm{i} \arg G(\omega_c)})=\pi-\arg G(\omega_c) \in (0,\pi/2)$.  By using this triangle, the sine rule gives that
\begin{equation}\label{eq:Xonly}
	\bar X= - \frac{ \sin (\arg (G(\omega_c)+\arg \omega_c)}{ \sin (\arg (G(\omega_c)-\arg \omega_c)}. 
\end{equation}
Similarly,
\begin{equation}\label{eq:YandG}
\sqrt{\frac{{2}}{c}} \omega_c^{-1} = \bar Y^{1/2} e^{\mathrm{i} \arg G(\omega_c^{-1})} -\bar Y^{-1/2} e^{-\mathrm{i} \arg G(\omega_c^{-1})}.
\end{equation}
This equation represents a triangle in the lower half plane since $\arg \omega_c^{-1} \in (-\pi/2,0)$,  $\arg G(\omega_c^{-1}) \in (\pi,3\pi/2)$ and $\arg(-e^{-\mathrm{i} \arg G(\omega_c^{-1})})=\pi-\arg G(\omega_c^{-1}) \in (-\pi/2,0)$.
By using this triangle, the sine rule gives that
\begin{equation}\label{eq:Yonly}
	\bar Y= - \frac{ \sin (\arg (G(\omega_c^{-1})-\arg \omega_c)}{ \sin (\arg (G(\omega_c^{-1})+\arg \omega_c)}. 
\end{equation}
We also have using~\eqref{eq:XandG} and~\eqref{eq:YandG}
\begin{equation}
\begin{split}
{\frac{{2}}{c}}&=\sqrt{\frac{{2}}{c}}\omega_c \sqrt{\frac{{2}}{c}}\omega_c^{-1}\\
&=\bigg(\bar X^{\frac{1}{2}} e^{\mathrm{i} \arg G(\omega_c)} -\bar X^{-\frac{1}{2}} e^{-\mathrm{i} \arg G(\omega_c)}\bigg) \bigg(\bar Y^{\frac{1}{2}} e^{\mathrm{i} \arg G(\omega_c^{-1})} -\bar Y^{-\frac{1}{2}} e^{-\mathrm{i} \arg G(\omega_c^{-1})}\bigg).
\end{split}
\end{equation}
Taking  the real part of the above equation yields 
\begin{equation}
\begin{split}
&(\bar X\bar Y+1) \cos( \arg G(\omega_c) +\arg G(\omega_c^{-1})) - (\bar X+\bar Y) \cos ( \arg G(\omega_c) -\arg G(\omega_c^{-1}))\\
 &=\frac2c\sqrt{\bar X\bar Y}.
\end{split}
\end{equation}
Using the equations for~\eqref{eq:Xonly} and~\eqref{eq:Yonly} we find that 
\begin{equation}
\begin{split}
&\frac{c}{2}\sin(2 \arg G(\omega_c)) \sin(2 \arg G(\omega_c^{-1})) = \bigg( \sin(\arg G(\omega_c)+\arg \omega_c) \\
	&\times  \sin(\arg G(\omega_c^{-1})+\arg \omega_c) \sin(\arg G(\omega_c)-\arg \omega_c)\sin(\arg G(\omega_c^{-1})-\arg \omega_c) \bigg)^{1/2}.
\end{split}
\end{equation}
The equations \eqref{eq:G+}, \eqref{eq:G-} for  $\arg G(\omega_c)$ and $\arg G(\omega_c^{-1})$, together with 
angle addition formulas followed by factorizing and simplifying, give the result \eqref{eq:eqoc}.
\end{proof}

Finally, we show how to choose the correct solution of \eqref{eq:eqoc} for $\arg(\omega_c)$:
\begin{proof}[Proof of Theorem \ref{th:v}]
  As we remarked at the beginning of Section \ref{sec:compv}, we can
  assume that $|\rho_+|\ne |\rho_-|$. Also, given symmetries \eqref{eq:simmv}, \eqref{eq:simmv1}, we can assume that $0<\rho_1-\rho_2<1$ and $-1<\rho_1+\rho_2<0$
(i.e. $\delta_1=\delta_2=0$) and all other cases immediately follow.

Solving \eqref{eq:eqoc} for $\arg(\omega_c)$, we find that 
\begin{eqnarray}
\label{eq:eqoc2}
&  \arg(\omega_c)=\frac12\arccos(y_\pm),
\\
&y_\pm=\frac{-\cos(\rho_-)-\cos(\rho_+)\pm \sqrt{(\cos(\rho_-)-\cos(\rho_+))^2+4 c^2 \sin^2(\rho_-)\sin^2(\rho_+)^2}}2
\end{eqnarray}
and the r.h.s. of \eqref{eq:eqoc2} must be in $[0,\pi/2]$ thanks to Lemma \ref{lem:existsomega}.
Imposing that the speed vanishes at $\rho=(-1/2,-1/2)$, as a consequence of \eqref{eq:simmv1}, imposes that the correct solution is $y_-$ and \eqref{eq:v} is proven.
\end{proof}

\subsection{Asymptotic expansion of the speed of growth for small slope}
\label{sec:propv}
The proof of Theorem \ref{th:a} is a lengthy but straightforward
application of calculus; we do not give details. The explicit form of
$f_1(r)$ and $f_2(r)$ turn out to be
\begin{eqnarray}
    \label{eq:f1f2}
    f_1(r)=\frac14\left(1 + r^2 - \sqrt{1 + 2 (-1 + 8 c^2) r^2 + r^4}\right)\\
    f_2(r)=\frac1{48}\left (-1 - r^4 + \frac{(1 + r^2) (1 + (-2 + 32 c^2) r^2 + r^4)}{\sqrt{
   1 + 2 (-1 + 8 c^2) r^2 + r^4}}\right).
  \end{eqnarray}
  Note that $f_1(\cdot)>0$ whenever $a<1$, i.e. $c<1/2$ (it vanishes identically if
  $c\to1/2$).  To prove the last claim of Theorem \ref{th:a}, we remark
  that the error term $O(\rho_+^5)$ in \eqref{eq:asympt} gives a
  $o(1)$ contribution to the determinant and the trace of $H_\rho$ as
  $\rho\to0$.  Starting from \eqref{eq:asympt}, a Taylor expansion
  shows (with the same notations $r=\rho_-/\rho_+$ as in Theorem
  \ref{th:a}) that
  \begin{eqnarray}
    \label{eq:trH}
    \Tr[H_\rho]=\frac{1+r^2}{2\pi \rho_+}\frac{ f'_1(r)^2 -2f_1(r)f''_1(r)}{2\sqrt2 f_1(r)^{3/2}}+o(1)
  \end{eqnarray}
  as $\rho\to0$.  With similar computations, one finds
  \begin{eqnarray}
    \label{eq:dethas}
    \det(H_\rho)=\frac{(f_1(r)^2+6f_2(r))(2f_1(r)f''_1(r)-f_1'(r)^2)}{16 \pi^2 f_1(r)^2}+o(1).
  \end{eqnarray}
  It is not hard to check that $(f_1(r)^2+6f_2(r))>0$, while
  $2f_1(r)f''_1(r)-f_1'(r)^2$ is negative whenever $a<1$.  As a
  consequence, for $a<1$ the trace of $H_\rho$ diverges and its
  determinant tends to a finite negative limit as $\rho\to0$, as
  claimed.

\section{AKPZ signature of the growth model}
\label{sec:AKPZsig}

In this section we prove Theorem \ref{th:harmonic} and \ref{th:AKPZ}.

\begin{proof}[Proof of Theorem \ref{th:harmonic}]
Note that, as a consequence of $\arg z \in (0,\pi/2)$ for $z\in\mathcal Q^+$, one has
$\arg G(z) \in (\pi/2,\pi)$. An easy way to see this is to
remark that $\log G(z)$ is analytic in $\mathcal Q^+$, so that its
imaginary part $\arg(G(z))$ is harmonic. On the other hand, along
the positive real axis $\arg G(z)=\pi$, while along the positive
imaginary axis, recalling definition \eqref{eq:sqrt}, $\arg G(z)$
is easily seen to be in $[\pi/2,\pi]$. Given that at infinity
$G(z)\sim -c/z$ whose argument is also in $[\pi/2,\pi]$ and
since $\mathcal Q^+$ is open, we deduce the
claim.  A similar argument gives
$\arg G(1/z) \in (\pi,3\pi/2)$. Altogether, we see that $(X(z),Y(z))\in(\pi/2,\pi)\times(\pi,3\pi/2)$.

Next, we check that the map from $z$ to $(X,Y)$ is a local diffeomorphism. 
Writing 
as usual $z=|z| e^{i\theta}$, we
 have (using the usual identities  $\partial_\theta z=i z$ and $\partial_{|z|}z=z/|z|$, as well as the definition $X=\Im\log G(z)$)
 \begin{eqnarray}
   \label{eq:jacob}
   \partial_\theta X=\Re(V(z)),\qquad
\partial_\theta Y=-\Re(V(1/z)),\\
\partial_{|z|}X=\frac1{|z|}\Im(V(z)),\qquad
\partial_{|z|}Y=-\frac1{|z|}\Im(V(1/z))
 \end{eqnarray}
where $ V(z):=\frac{ z G'(z)}{G(z)}$. The determinant of the Jacobian vanishes only if
\begin{eqnarray}
\label{eq:j2}
  \Re(V(z))\Im(V(1/z))=\Re(V(1/z))\Im(V(z))
\end{eqnarray}
and we show in a moment that this condition is never satisfied.
We will see later that  $\Re(V(z))$,  $\Re(V(1/z))$ are non-zero in the whole $\mathcal Q^+$, so \eqref{eq:j2}
is equivalent to 
\begin{eqnarray}
 \frac{\Im(V(z))}{\Re(V(z))}= \frac{\Im(V(1/z))}{\Re(V(1/z))},
\end{eqnarray}
i.e.  $V(z)$ has the same argument as $V(1/z)$, i.e. 
\begin{eqnarray}
\label{eq:primorap}
  \frac{V(z)}{V(1/z)}=\alpha
\end{eqnarray}
for some $\alpha>0$. Now we use the explicit form of $G(z)$, which implies that
\begin{eqnarray}
  \label{eq:V}
V(z)=-\frac z{\sqrt{z^2+2c}}.
\end{eqnarray}
  The l.h.s. of
\eqref{eq:primorap} is real on the positive real axis. One can check
from Remark \ref{rem:zeta} that its
argument is in $[0,\pi/2]$ when the imaginary axis is approached from
the first quadrant. To be precise, if $z$ tends to $iy,y>0$, the
argument tends to $\pi/2$ if $y>1/\sqrt{2c}$ or $y<\sqrt{2c}$ and to
$0$ if $\sqrt{2c}<y<1/\sqrt{2c}$; if $y=\sqrt{2c}$ or $1/\sqrt{2c}$
the limit need not exist but the limit points are in
$[0,\pi/2]$. Also, for $|z|\to\infty$ or $|z|\to 0$ (with $z\in\mathcal Q^+$) the
argument of the l.h.s. of
\eqref{eq:primorap} is in $(0,\pi/2)$ because the ratio is $z\sqrt{2c}+o(z)$ and $z/\sqrt{2c}+o(z)$ respectively. Since
$\arg(V(z)/V(1/z))$ is a harmonic function in the open set $\mathcal Q^+$ with boundary values in $[0,\pi/2]$ and not identically $0$, it vanishes nowhere in $\mathcal Q^+$, so \eqref{eq:primorap} cannot hold for any
$\alpha>0$. By the way, the same harmonicity argument gives that the
real parts of $V(z)$ or $V(1/z)$  vanish nowhere in $\mathcal Q^+$, as
mentioned above.

Once we know that the map $z\mapsto (X,Y)$ is a local diffeomorphism,
we can conclude that it is a global diffeomorphism, via a theorem by
Hadamard \cite{hadamard1906transformations,gordon1972diffeomorphisms}, if we prove that it is proper, i.e. for
every sequence $z_n\in\mathcal Q^+$ such that
$z_n\to z\in\partial \mathcal Q^+$,  $(X(z_n),Y(z_n))$ tends to
a point on the boundary of
$(\pi/2,\pi)\times(\pi,3\pi/2)$\footnote{Hadamard's theorem is
  formulated as a necessary and sufficient condition for a smooth map
  from $\mathbb R^n$ to $\mathbb R^n$ to be a diffeomorphism.  In our
  case the map is from $\mathcal Q^+$ to
  $(\pi/2,\pi)\times(\pi,3\pi/2)$ but the proof works essentially the
  same, see for instance \cite[Th. 4.4]{laslier2015lozenge} for an analogous
  case where the map is between two open connected subsets of
  $\mathbb R^4$}.  Here, when we write $z_n\to z\in\partial \mathcal Q^+$, we
mean that either $z$ is purely real, or it is purely imaginary, or
$|z_n|$ tends to infinity.
When $z_n$ tends to a real limit, it is clear that $\arg G(z)$ tends to $\pi$ while when $|z_n|\to\infty$, $\arg(G(1/z))$ tends to $\pi$.
When instead $z_n$ tends to $iy$, then Remark \ref{rem:zeta} shows that $\arg G(z_n)$ tends to $\pi/2$ if $y>\sqrt{2c}$ while $\arg(G(1/z_n))$ tends to $3\pi/2$ if $y<1/\sqrt{2c}$. Since $\sqrt{2c}\le 1/\sqrt{2c}$, we proved our claim that in all cases $(X(z_n),Y(z_n))$ approaches the boundary of $(\pi/2,\pi)\times(\pi,3\pi/2)$.

The proof of \eqref{eq:v2} follows from Eqs. \eqref{eq:G+},
\eqref{eq:G-} and \eqref{viro}, because for $\rho_+>0,\rho_->0$ one
has $\delta_1=0,\delta_2=1$, cf. \eqref{eq:deltasandrhos}.

  \end{proof}

  \begin{proof}[Proof of Theorem \ref{th:AKPZ}]
    For $a=1$, an explicit computation starting from \eqref{eq:v} shows that 
 \begin{eqnarray}
    \label{eq:detha1}
    \left.    \det(H_\rho)\right|_{a=1}=-\frac{4\pi^2\cos(\rho_+/2)^2\cos(\rho_-/2)^2}{(3+\cos(\rho_+)+\cos(\rho_-)-\cos(\rho_+)\cos(\rho_-))^2}
  \end{eqnarray}
  that is strictly negative in $\mathcal L$.
  Similarly, for $\rho_+>0$
a lengthy but straightforward analysis of \eqref{eq:v} gives
 \begin{eqnarray}
 \left.\det(H_\rho)\right|_{\rho_-=0}=-\frac{\pi^2c^4\sin(\rho_+)^2}{1-2c^2-2c^2\cos(\rho_+)}<0.
 \end{eqnarray}

Therefore, from now on we will assume by symmetry
$0< \rho_+,\rho_-<\pi$.  Since the relation \eqref{eq:XY} between
$(\rho_1,\rho_2)$ and $(X,Y)$ is affine, it is sufficient to prove
that the Hessian of $\arg z(X,Y)$ has a strictly negative
determinant. We know  from Theorem \ref{th:a}  that $\det(H_\rho)<0$ for $\rho$ small, so we need only
to prove that the sign of the determinant vanishes nowhere.

We have from \eqref{eq:jacob}
\begin{eqnarray}
 \frac{\partial\arg(z)}{\partial X} =\frac1{\Re(V(z))}, \qquad  \frac{\partial\arg(z)}{\partial Y}=-\frac1{\Re(V(1/z))}.
\end{eqnarray}
Differentiating once more,
\begin{eqnarray}
 & \frac{\partial^2\arg(z)}{\partial X^2}=-\frac1{[\Re(V(z))]^2}\partial_X \Re(V(z))\\
 & \frac{\partial^2\arg(z)}{\partial X\partial Y}=-\frac1{[\Re(V(z))]^2}\partial_Y \Re(V(z))=\frac1{[\Re(V(1/z))]^2}\partial_X \Re(V(1/z)) \\
 & \frac{\partial^2\arg(z)}{\partial Y^2}=\frac1{[\Re(V(1/z))]^2}\partial_Y \Re(V(1/z)).
\end{eqnarray}
Then, the determinant of the Hessian of $\theta$ as function of $(X,Y)$ is
\begin{eqnarray}
  -\frac{\partial_X \Re(V(z))\partial_Y \Re(V(1/z))- \partial_Y \Re(V(z))\partial_X \Re(V(1/z))}{[\Re(V(1/z))]^2[\Re(V(z))]^2}
\end{eqnarray}
and it is enough to prove that the numerator vanishes nowhere.
A few lines of computations (writing $\partial_X=1/(\partial_\theta X)\partial_\theta+1/(\partial_{|z|} X)\partial_{|z|}$ and similarly for $\partial_Y$) show that the numerator equals
\begin{multline}
\label{eq:combinaz1}
  |z|\left(\frac1{\Im(V(z))\Re(V(1/z))}-\frac1{\Re(V(z))\Im(V(1/z))}
\right)\\
\times\left(\partial_\theta \Re(V(z))\partial_{|z|}\Re(V(1/z))-\partial_\theta \Re(V(1/z))\partial_{|z|}\Re(V(z))
\right).
\end{multline}
Again, a straightforward  computation gives that the second line equals
\begin{eqnarray}
  \frac1{|z|}\left(\Im(U(z))\Re(U(1/z))-\Re(U(z))\Im(U(1/z))\right)
\label{eq:combinaz2}
\end{eqnarray}
where
\begin{eqnarray}
  U(z)=z\frac{G'(z)+z G''(z)}{G(z)}-V(z)^2= -\frac{2cz}{(z^2+2c)^{3/2}}.
\end{eqnarray}
The first line in \eqref{eq:combinaz1} never vanishes, because we already proved that 
\eqref{eq:j2} is never satisfied. Similarly, \eqref{eq:combinaz2} vanishes only if 
\begin{eqnarray}
  \label{eq:secondorap}
\frac{U(z)}{U(1/z)}=\alpha
\end{eqnarray}
for some $\alpha>0$. 
The  l.h.s. of \eqref{eq:secondorap} is positive on the positive
real axis. This time, one sees that  ${\arg}(U(z)/U(1/z))\in (-\pi/2,0)$ for
$|z|\to\infty$ and in ${\arg}(U(z)/U(1/z))\in [-\pi/2,0]$ (and not identically zero) when the positive imaginary axis is
approached. Since ${\arg} (U(z)/U(1/z))$ is harmonic on the open set $\mathcal Q^+$, it vanishes nowhere and \eqref{eq:secondorap} is nowhere satisfied.
  \end{proof}

  \section*{Acknowledgements} We are  grateful to Patrik Ferrari and Sanjay Ramassamy for enlightening discussions. We would also like to thank the referee for their careful reading of our manuscript. 
   F.T. was partially supported by the CNRS PICS grant ``Interfaces al\'eatoires discr\`etes et dynamiques de Glauber'',  by  the  ANR-15-CE40-0020-03
   Grant LSD and by  Labex MiLyon (ANR-10-LABX-0070).
   \bibliographystyle{plain}
\bibliography{Biblio}

\begin{thebibliography}{10}

\bibitem{BS95}
A.L. Barab{\'a}si and H.E. Stanley.
\newblock {\em Fractal Concepts in Surface Growth}.
\newblock Cambridge University Press, Cambridge, 1995.

\bibitem{BF08}
A.~Borodin and P.L. Ferrari.
\newblock {Anisotropic Growth of Random Surfaces in $2+1$ Dimensions}.
\newblock {\em Comm. Math. Phys.}, 325:603--684, 2014.

\bibitem{brandenberger1982decay}
R.~Brandenberger and C.~E. Wayne.
\newblock Decay of correlations in surface models.
\newblock {\em Journal of Statistical Physics}, 27(3):425--440, 1982.

\bibitem{bricmont1982surface}
J.~Bricmont, J.-R. Fontaine, and J.~L. Lebowitz.
\newblock Surface tension, percolation, and roughening.
\newblock {\em Journal of Statistical Physics}, 29(2):193--203, 1982.

\bibitem{chhita2017speed}
S.~Chhita, P.~L. Ferrari, and F.~L. Toninelli.
\newblock Speed and fluctuations for some driven dimer models.
\newblock {\em to appear on Ann. Inst. Henri Poincar\'e D, arXiv preprint
  arXiv:1705.07641}, 2017.

\bibitem{CJ16}
S.~Chhita and K.~Johansson.
\newblock Domino statistics of the two-periodic {A}ztec diamond.
\newblock {\em Adv. Math.}, 294:37--149, 2016.

\bibitem{CF15}
Sunil Chhita and Patrik Ferrari.
\newblock A combinatorial identity for the speed of growth in an anisotropic
  {KPZ} model.
\newblock {\em Ann. Inst. Henri Poincar\'e D}, 4(4):453--477, 2017.

\bibitem{Ciu98}
Mihai Ciucu.
\newblock A complementation theorem for perfect matchings of graphs having a
  cellular completion.
\newblock {\em J. Combin. Theory Ser. A}, 81(1):34--68, 1998.

\bibitem{Cor11}
I.~Corwin.
\newblock {The Kardar-Parisi-Zhang equation and universality class}.
\newblock {\em Random Matrices: Theory Appl.}, 01:1130001, 2012.

\bibitem{dobrushin1973gibbs}
R.~L. Dobrushin.
\newblock Gibbs state describing coexistence of phases for a three-dimensional
  ising model.
\newblock {\em Theory of Probability \& Its Applications}, 17(4):582--600,
  1973.

\bibitem{DK17}
Maurice Duits and Arno~B.J. Kuijlaars.
\newblock The two periodic {A}ztec diamond and matrix valued orthogonal
  polynomials.
\newblock {\em arXiv preprint arXiv:1712.05636}, 2017.

\bibitem{EKLP92}
N.~Elkies, G.~Kuperberg, M.~Larsen, and J.~Propp.
\newblock Alternating-sign matrices and domino tilings.
\newblock {\em J. Algebraic Combin.}, 1:111--132, 219--234, 1992.

\bibitem{FS10}
P.L. Ferrari and H.~Spohn.
\newblock {Random Growth Models}.
\newblock In G.~Akemann, J.~Baik, and P.~{Di Francesco}, editors, {\em The
  Oxford handbook of random matrix theory}, pages 782--801. Oxford Univ. Press,
  Oxford, 2011.

\bibitem{GK13}
A.~B. Goncharov and R.~Kenyon.
\newblock Dimers and cluster integrable systems.
\newblock {\em Ann. Sci. \'Ec. Norm. Sup\'er. (4)}, 46(5):747--813, 2013.

\bibitem{gordon1972diffeomorphisms}
W.~B. Gordon.
\newblock On the diffeomorphisms of {E}uclidean space.
\newblock {\em The American Mathematical Monthly}, 79(7):755--759, 1972.

\bibitem{gu2017edwards}
Yu~Gu, Lenya Ryzhik, and Ofer Zeitouni.
\newblock The {E}dwards-{W}ilkinson limit of the random heat equation in
  dimensions three and higher.
\newblock {\em arXiv preprint arXiv:1710.00344}, 2017.

\bibitem{hadamard1906transformations}
J.~Hadamard.
\newblock Sur les transformations ponctuelles.
\newblock {\em Bull. Soc. Math. France}, 34:71--84, 1906.

\bibitem{Hai11}
M.~Hairer.
\newblock {Solving the KPZ equation}.
\newblock {\em Ann. Math.}, 178:559--664, 2013.

\bibitem{HH13}
Timothy Halpin-Healy.
\newblock Extremal paths, the stochastic heat equation, and the
  three-dimensional kardar-parisi-zhang universality class.
\newblock {\em Physical Review E}, 88(4):042118, 2013.

\bibitem{halpin1992kinetic}
Timothy Halpin-Healy and Amine Assdah.
\newblock On the kinetic roughening of vicinal surfaces.
\newblock {\em Physical Review A}, 46(6):3527, 1992.

\bibitem{KPZ86}
M.~Kardar, G.~Parisi, and Y.Z. Zhang.
\newblock Dynamic scaling of growing interfaces.
\newblock {\em Phys. Rev. Lett.}, 56:889--892, 1986.

\bibitem{KenLectures}
R.~Kenyon.
\newblock Lectures on dimers.
\newblock {\em Available via \newline {\small
  \verb+http://www.math.brown.edu/~rkenyon/papers/dimerlecturenotes.pdf+}}.

\bibitem{KOS03}
R.~Kenyon, A.~Okounkov, and S.~Sheffield.
\newblock Dimers and amoebae.
\newblock {\em Ann. of Math.}, 163:1019--1056, 2006.

\bibitem{KL99}
C.~Kipnis and C.~Landim.
\newblock {\em {Scaling Limits of Interacting Particle Systems}}.
\newblock Springer Verlag, Berlin, 1999.

\bibitem{krug1992kinetic}
J.~Krug and H.~Spohn.
\newblock Kinetic roughening of growing surfaces, in: Solids far from
  equilibrium ({C}. {G}odr{\`e}che, ed.), {C}ollection {A}l{\'e}a-{S}aclay:
  {M}onographs and {T}exts in {S}tatistical {P}hysics, 1, 1992.

\bibitem{laslier2015lozenge}
B.~Laslier and F.~L. Toninelli.
\newblock Lozenge tilings, {G}lauber dynamics and macroscopic shape.
\newblock {\em Communications in Mathematical Physics}, 338(3):1287--1326,
  2015.

\bibitem{magnen2017diffusive}
Jacques Magnen and J{\'e}r{\'e}mie Unterberger.
\newblock Diffusive limit for 3-dimensional kpz equation: the cole-hopf case.
\newblock {\em arXiv preprint arXiv:1702.03122}, 2017.

\bibitem{Pro03}
J.~Propp.
\newblock {Generalized Domino-Shuffling}.
\newblock {\em Theoret. Comput. Sci.}, 303:267--301, 2003.

\bibitem{QS15}
J.~Quastel and H.~Spohn.
\newblock The one-dimensional {KPZ} equation and its universality class.
\newblock {\em J. Stat. Phys.}, 160:965--984, 2015.

\bibitem{Spo91}
H.~Spohn.
\newblock {\em {Large Scale Dynamics of Interacting Particles}}.
\newblock Texts and Monographs in Physics. Springer Verlag, Heidelberg, 1991.

\bibitem{TFW92}
L.-H. Tang, B.M. Forrest, and D.E. Wolf.
\newblock {Kinetic surface roughening. II. Hypercube stacking models}.
\newblock {\em Phys. Rev. A}, 45:7162--7169, 1992.

\bibitem{toninelli20173+}
F.~L. Toninelli.
\newblock (2+ 1)-dimensional interface dynamics: mixing time, hydrodynamic
  limit and anisotropic {KPZ} growth.
\newblock {\em arXiv preprint arXiv:1711.05571, to appear on the Proceedings of
  the ICM 2018}.

\bibitem{toninelli20172+}
F.~L. Toninelli.
\newblock A $(2+ 1) $-dimensional growth process with explicit stationary
  measures.
\newblock {\em The Annals of Probability}, 45(5):2899--2940, 2017.

\bibitem{Wol91}
D.E. Wolf.
\newblock Kinetic roughening of vicinal surfaces.
\newblock {\em Phys. Rev. Lett.}, 67:1783--1786, 1991.

\end{thebibliography}

\end{document}